\theoremstyle:=definition,remark,plain\do{%
        \expandafter\g@addto@macro\csname th@\theoremstyle\endcsname{%
            \addtolength\thm@preskip\parskip
            }%
        }
\theoremstyle{plain}
\newtheorem{thm}{Theorem}
\newtheorem{lemma}[thm]{Lemma}
\newtheorem{propn}[thm]{Proposition}
\newtheorem{cor}[thm]{Corollary}
\theoremstyle{definition}
\newtheorem{defn}[thm]{Definition}
\theoremstyle{remark}
\newtheorem{rmk}[thm]{Remark}
\newtheorem*{claim}{Claim}
\renewcommand{\subset}{\subseteq}
\newcommand{\R}{\mathbb{R}}
\newcommand{\Z}{\mathbb{Z}}
\newcommand{\C}{\mathbb{C}}
\newcommand{\B}{\mathbb{B}}
\newcommand{\K}{\mathbb{K}}
\newcommand{\A}{\mathcal{A}}
\newcommand{\F}{\mathcal{F}}
\newcommand{\T}{\mathcal{T}}
 \newcommand{\Index}{\mathrm{Index}}
\newcommand{\Ri}{\R \cup \{+\infty\}}
\begin{document}

\title{The smooth algebra of a one-dimensional singular foliation}
\author{
Michael Francis\\
Department of Mathematics\\ 
Pennsylvania State University\\ 
University Park, PA 16802, USA\\
\url{mjf5726@psu.edu}
}
\date{November 16, 2020}

\maketitle

\begin{abstract}
\noindent Androulidakis and Skandalis showed how to associate  a holonomy groupoid, a smooth convolution algebra and a C*-algebra to any singular foliation. In this note, we consider  the singular foliations of a one-dimensional manifold given by vector fields that vanish to   order $k$ at a point. We show that, whereas the C*-algebras of these foliations are divided into two isomorphism classes according to the parity of $k$, the smooth algebras  are pairwise nonisomorphic.  This is accomplished by analyzing certain natural ideals  in the smooth algebras. Issues of factorization with respect to convolution arise and are resolved using a  context-appropriate version of the   Diximier-Malliavin theorem. 
\end{abstract}

\section{Introduction}

Given any singular foliation $\F$  of a smooth manifold $M$, it was shown in \cite{AS[2007]} how to construct a holonomy groupoid $G(\F)$, a  smooth convolution algebra $\A(\F)$ and a  C*-algebra\footnote{As usual, more than one C*-completion can be considered,  including a reduced and a maximal version.  These differences will not matter here. Indeed, for the examples we consider, the natural representation of the maximal  C*-algebra on the direct sum of the $L^2$-spaces of the  leaves with trivial holonomy is  faithful, and so the two standard C*-completions agree with one another.} $C^*(\F)$. In this context, the word ``foliation'' is  understood in the following way:

\begin{defn}[\cite{AS[2007]}, Definitions~1.1,~1.2]
A \emph{foliation} $\F$ of a smooth manifold $M$ is a locally finitely-generated $C^\infty(M)$-module of compactly-supported vector fields on $M$  closed under Lie bracket. The \emph{tangent space} of $\F$ at $x \in M$ is $T_x\F \colonequals \{ X(x) : X \in \F\}$. The \emph{fiber} of $\F$ at $x$ is $A_x \F \colonequals \F/I_x \F$, where $I_x \subset C^\infty(M)$ denotes the ideal of functions vanishing at $x$. A foliation is called \emph{regular} if its tangent spaces have constant dimension and is called \emph{almost regular} if its fibers have constant dimension.  
\end{defn} 
The holonomy groupoid constructed in \cite{AS[2007]} can be quite poorly-behaved for general foliations. The case where $\F$ is almost regular is exactly the case where $G(\F)$ is a Lie groupoid. This case was previously treated by Debord in \cite{Debord}.  When $\F$ is almost regular, $\A(\F)$ is isomorphic to  $C_c^\infty(G(\F))$, the smooth convolution algebra of the  groupoid,   and $C^*(\F)$ is isomorphic to $C^*(G(\F))$, the C*-algebra of the groupoid. Here we are implicitly fixing a smooth Haar system on $G(\F)$ in order to make sense of convolution and bypass any discussion of densities. 

In this article, we are primarily concerned with the following specific family of  almost regular foliations (Example~1.3~(3), \cite{AS[2007]}).

\begin{defn}
For each positive integer $k$, we denote by $\F^k_\R$ the almost regular foliation of $\R$ singly-generated by $x^k \frac{d}{dx}$. That is, $\F^k_\R$ consists of all compactly-supported, smooth vector fields which vanish to order $k$ at $0$.  
\end{defn}

The holonomy groupoids of the foliations $\F^k_\R$ are not   difficult to understand; $G(\F^k_\R)$ is canonically isomorphic to the transformation groupoid $\R \rtimes_\phi \R$ associated to the flow of any complete vector field generating $\F^k_\R$. Accordingly, the smooth algebra $\A(\F^k_\R)$ is isomorphic to $C_c^\infty(\R \rtimes_\phi \R)$ and the C*-algebra $C^*(\F^k_\R)$ is isomorphic to  $C_0(\R) \rtimes_\phi \R$, the crossed product of the associated action of $\R$ on $C_0(\R)$. 

Our goal is to understand the extent to which the holonomy groupoids,  smooth algebras and C*-algebras of the foliations $\F^k_\R$ remember the positive integer $k$.   The most substantial part of this story is the analysis of the smooth algebras and the main result is the following:

\begin{thm}
The smooth convolution algebras of the foliations $\F^k_\R$, as $k$ ranges over the positive integers, are pairwise nonisomorphic.
\end{thm}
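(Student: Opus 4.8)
The plan is to realize each smooth algebra as a convolution algebra on a transformation groupoid and to extract an isomorphism invariant equal to $k$ from the interplay between its space of commutators and a canonical filtration by ideals. Using the identification $\A(\F^k_\R) \cong C_c^\infty(\R \rtimes_\phi \R)$ recalled above, I would fix a complete generator of $\F^k_\R$ and record that its flow satisfies $\phi_t(x) = x + t\, a\, x^k + O(x^{k+1})$ near the fixed point $x = 0$, with $a \neq 0$; the integer $k$ is visible as the order of the first nontrivial jet of $\phi_t$ at $0$. The group variable is $t$ and the base variable is $x$, with convolution $(f * g)(x,t) = \int_\R f(x,s)\, g(\phi_{-s}(x), t - s)\, ds$.

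First I would isolate the fixed point intrinsically. The two open leaves $\R_{>0}$ and $\R_{<0}$ carry trivial isotropy, so over each the groupoid is a pair groupoid and the corresponding quotient of $\A$ is a ``matricial'' (smoothing-kernel) algebra admitting no nonzero characters; by contrast, the fixed point has isotropy $\R$ and the restriction $f \mapsto f(0, \cdot)$ is a surjection onto the abelian convolution algebra $C_c^\infty(\R)$ of the isotropy group. Consequently every one-dimensional representation of $\A$ factors through $f \mapsto f(0,\cdot)$, and I would define $J \colonequals \{ f \in \A : f(0,\cdot) = 0 \}$ as the common kernel of all characters of $\A$. This exhibits $J$ as an \emph{intrinsic} two-sided ideal.

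Next I would set up the filtration $J_n \colonequals \{ f \in \A : f \text{ vanishes to order } \geq n \text{ in } x \text{ at } x = 0 \}$, with $J_1 = J$. Since the group fixes $0$, each $J_n$ is a two-sided ideal and the filtration is \emph{multiplicative}, $J_m * J_n \subset J_{m+n}$, because each factor contributes its own power of $x$. The key structural step is to prove $J_n = J^{*n}$ (finite sums of $n$-fold convolution products), so that the entire filtration is generated by the intrinsic ideal $J$ and is therefore itself intrinsic. The inclusion $J^{*n} \subset J_n$ is immediate from multiplicativity; the reverse inclusion is a \emph{factorization} statement --- every function vanishing to order $n$ is a finite sum of $n$-fold products of functions vanishing to order $1$ --- and this is exactly where a context-appropriate Dixmier--Malliavin theorem is needed, to factor with controlled order of vanishing. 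I expect this to be the main obstacle: ordinary idempotency arguments are unavailable here because $J_n * J_n \subset J_{2n} \subsetneq J_n$, so the filtration is genuinely non-idempotent and one must control the smooth, compactly supported factorization directly.

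Finally I would locate $k$ via commutators. Because the isotropy group $\R$ is abelian, scalar convolution on $\R$ is commutative, and a direct expansion shows that the ``diagonal'' part of $f * g$ --- the contribution in which each factor keeps its own power of $x$ --- is symmetric in $f$ and $g$; hence it cancels in $[f,g] = f*g - g*f$ at every order. The surviving terms of $[f,g]$ come solely from the nonlinearity $\phi_{-s}(x) = x - s\,a\,x^k + O(x^{k+1})$, whose leading correction shifts the order by $k-1$. Carrying this out gives $[\A,\A] \subset J_k$, together with sharpness: taking $f$ of order $0$ and $g$ of order $1$ produces a commutator with nonzero coefficient of $x^k$, so $[\A,\A] \not\subset J_{k+1}$. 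Therefore $k = \max\{ n : [\A,\A] \subset J^{*n} \}$ is expressed purely in terms of the convolution product, the span of commutators, and the intrinsic ideal $J$; any algebra isomorphism $\A(\F^k_\R) \cong \A(\F^{k'}_\R)$ must preserve all of these and hence force $k = k'$, giving pairwise nonisomorphism.
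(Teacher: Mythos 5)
Your main line is essentially the paper's: you filter $\A(\F^k_\R)\cong C_c^\infty(\R\rtimes_\phi\R)$ by the ideals $J_p$ of functions vanishing to order $p$ on the isotropy line $\{0\}\times\R$, you detect $k$ as the largest $p$ with $[\A,\A]\subset J_p$ (equivalently, $\A/J_p$ commutative iff $p\le k$, with sharpness coming from the relation $x\cdot f = f\cdot x + \delta(f)\,x^k$ forced by $\phi_t(x)=x+tx^k+O(x^{k+1})$), and you make the filtration intrinsic by proving $J_p=J_1^{*p}$ via a groupoid Dixmier--Malliavin factorization. This is precisely the paper's Theorem~\ref{commVSnoncomm}, Proposition~\ref{DMspecial} and Corollary~\ref{smoothintrinsic}. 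One small omission in your sketch: the assertions $J_m*J_n\subset J_{m+n}$ and $(J_1)^{*p}=x^p\cdot\A^{*p}$ both require moving powers of $x$ across the convolution, in which the two factors are evaluated at different points ($\phi_s(x)$ versus $x$); the paper handles this with the smooth positive $1$-cocycle $\Delta(x,t)=\phi_t(x)/x$ and the resulting relation $x\cdot f=(\Delta f)\cdot x$. Your phrase ``each factor contributes its own power of $x$'' is exactly this cocycle lemma and deserves a proof, though it is an easy one.

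The genuine gap is your intrinsicness argument for the bottom ideal $J=J_1=\{f: f(0,\cdot)=0\}$, which you propose to identify as the common kernel of all characters. First, a structural slip: the pair-groupoid (smoothing-kernel) algebras over the open leaves arise as \emph{ideals} of $\A$, not quotients (the quotients by restriction to closed invariant sets are Wiener--Hopf--type algebras, and the quotient at $\{0\}$ is $C_c^\infty(\R)$). Granting that those leaf ideals admit no characters, a character of $\A$ only descends to the algebra of \emph{germs} along $\{0\}\times\R$, and you must still rule out characters that are nonzero on germs vanishing at $x=0$. Nilpotency kills such characters in every finite truncation $\A/J_{p}$, but not in $\A$ itself: on the flat ideal $J_\infty=\bigcap_p J_p$ the inclusion $J_\infty\subset J^{*n}$ for all $n$ gives no vanishing, since a character applied to a finite sum of $n$-fold products, $\chi\bigl(\sum_i a_{i1}*\cdots*a_{in}\bigr)=\sum_i\prod_j\chi(a_{ij})$, is subject to no smallness constraint. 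Excluding such (necessarily discontinuous) characters is exactly the kind of automatic-continuity question the paper deliberately avoids: it strengthens the notion of isomorphism (Definition~\ref{IsomDefn}, requiring extension to the C*-completions) and then identifies $J$ as the intersection of $\A$ with the unique essential ideal isomorphic to $\K\oplus\K$ in $C_0(\R)\rtimes_\phi\R$ (Lemma~\ref{CstarIntrinsic}, Proposition~\ref{intrinsicbasecase}). So either supply a proof that every character of $\A$ kills $J$ --- which would be a stronger, purely algebraic result and is genuinely delicate --- or adopt the paper's convention, under which the remainder of your argument goes through and coincides with the paper's proof.
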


For C*-algebras, however, we have the following:

\begin{thm}
The C*-algebras of the foliations $\F^k_\R$, as $k$ ranges over the positive integers, are of two isomorphism types that are determined by the parity of $k$. 
\end{thm}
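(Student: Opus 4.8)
The plan is to pass, via the isomorphism $C^*(\F^k_\R)\cong C_0(\R)\rtimes_{\phi^k}\R$ recorded above, to the dynamics of the flow $\phi^k$ of a complete vector field $f\,x^k\tfrac{d}{dx}$ generating $\F^k_\R$, where $f>0$ is smooth and chosen so that the flow is complete. Multiplying by the positive function $f$ leaves the module, hence the foliation, unchanged and alters neither the fixed point set nor the sign of the generator, so the \emph{phase portrait} of $\phi^k$ is that of $x^k\tfrac{d}{dx}$: a single fixed point at $0$, with $(0,\infty)$ always flowing outward to $+\infty$, while on $(-\infty,0)$ the flow is outward (toward $-\infty$) when $k$ is odd and inward (toward $0$) when $k$ is even.

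For the positive half of the theorem (equal parity yields isomorphic C*-algebras) I would argue that two flows on $\R$ with the same phase portrait are topologically conjugate. Fixing base points on each open orbit and matching them along the time parameter produces a homeomorphism $h\colon\R\to\R$ fixing $0$ with $h\circ\phi^k_t=\phi^{k'}_t\circ h$ for all $t$; continuity at $0$ holds because for equal parity the orbits limit onto $0$ from the same side in the same time direction. Pulling back functions gives an $\R$-equivariant isomorphism of $C_0(\R)$ and hence $C_0(\R)\rtimes_{\phi^k}\R\cong C_0(\R)\rtimes_{\phi^{k'}}\R$, so all odd $k$ (resp.\ all even $k$) give isomorphic $C^*(\F^k_\R)$.

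The substance is the non-isomorphism across parities. Since $\{0\}$ is closed invariant and each of $(0,\infty),(-\infty,0)$ is a single free orbit, decomposing along these invariant sets yields
$$0\longrightarrow \mathcal{K}\oplus\mathcal{K}\longrightarrow C^*(\F^k_\R)\longrightarrow C_0(\widehat{\R})\longrightarrow 0,$$
the two copies of the compact operators $\mathcal{K}$ coming from the free transitive orbits and the quotient $C^*(\R)\cong C_0(\widehat{\R})$ from the fixed point. The primitive ideal space is the same for all $k$ (two open points whose closures each meet the closed line $\widehat{\R}$ in all of it, the closure being insensitive to the direction of approach), so the distinguishing data must live in the six-term sequence of this extension, specifically the index map $\mathrm{ind}\colon K_1(C_0(\widehat{\R}))=\Z\to K_0(\mathcal{K}\oplus\mathcal{K})=\Z^2$. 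I would compute each component by recognizing the half-line building block $C_0([0,\infty))\rtimes\R$ (repelling boundary fixed point) as a Wiener–Hopf/Toeplitz extension, whose index map is an isomorphism $\Z\to\Z$, the sign of each component being governed by whether $0$ is the $\alpha$-limit or the $\omega$-limit of the corresponding orbit for the global time direction. Reading off the phase portrait gives $\mathrm{ind}(1)=(1,1)$ (both orbits have $0$ as $\alpha$-limit) for $k$ odd and $\mathrm{ind}(1)=(1,-1)$ (one $\alpha$-, one $\omega$-limit) for $k$ even, up to overall sign.

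Finally I would promote this to a genuine isomorphism invariant. An isomorphism $C^*(\F^k_\R)\to C^*(\F^{k'}_\R)$ induces a homeomorphism of primitive ideal spaces and compatible isomorphisms of all subquotients, hence carries $\mathrm{ind}(1)$ for one algebra to that of the other up to the symmetries actually available: the induced homeomorphism can only permute the two open points of the spectrum, acting on $\Z^2$ by at most a coordinate swap, while the induced isomorphism of the quotient $C_0(\widehat{\R})$ acts on $K_1$ by a single global $\pm1$ (the restriction to each copy of $\mathcal{K}$ fixes the canonical positive generator of $K_0(\mathcal{K})$, and no automorphism can reverse the time orientation on one orbit alone). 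Under the group generated by coordinate-swap and global sign, the orbit of $(1,1)$ is $\{(1,1),(-1,-1)\}$ and that of $(1,-1)$ is $\{(1,-1),(-1,1)\}$; these are disjoint, so no isomorphism can exist when $k,k'$ have opposite parity. The main obstacle I anticipate is the bookkeeping in the index computation—fixing compatible orientations of time, of the two identifications with $\mathcal{K}$, and of the generator of $K_1(C_0(\widehat{\R}))$—so that the relative sign between the two components is unambiguous and provably equals $+1$ for odd $k$ and $-1$ for even $k$.
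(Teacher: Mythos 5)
Your proposal is correct and takes essentially the same route as the paper: the evaluation-at-$0$ extension $0 \to \K \oplus \K \to C_0(\R) \rtimes_\phi \R \to C^*(\R) \to 0$, recognition of each half-line piece as the Weiner--Hopf (Toeplitz) extension so that the K-theory boundary map $K_1(C^*(\R)) \to \Z \oplus \Z$ sends the generator to a vector with equal signs for odd $k$ and opposite signs for even $k$, and then rigidity of this data under isomorphism (the ideal $\K \oplus \K$ is intrinsic, minimal projections fix the positive generators of $K_0(\K)$, and the quotient contributes only a global sign on $K_1$). The sign bookkeeping you flag as the main obstacle is exactly what the paper sidesteps by composing the boundary map with the addition map $\Z \oplus \Z \to \Z$, which is invariant under the coordinate swap, so the invariant collapses to: image $2\Z$ for odd $k$ versus $0$ for even $k$.
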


This demonstrates the principle that, even for singular foliations as simple as these ones, there can be information stored in the smooth algebra which is washed away when one passes to the C*-algebra.

The flow of a smooth vector field   with a finite order critical point at the origin, and no other critical points, is determined by  up to  topological conjugacy and time-reversal symmetry by the parity of the order of its zero. This remark already  explains   why there are at most two possibilities for the isomorphism-type of the C*-algebra $C_0(\R) \rtimes_\phi \R$. The fact that two different C*-algebras do indeed occur  follows from an index calculation.

The more substantial issue of showing that the smooth algebras $\A(\F^k_\R) \cong C_c^\infty(\R \rtimes_\phi \R)$ are pairwise nonisomorphic requires different methods. The intuition here is as follows: if the flow $\phi$ fixes the origin to $k$th order in $x$, where $x$ denotes the coordinate of the manifold $\R$,  then the convolution product on $C_c^\infty(\R \rtimes_\phi \R)$  is  ``commutative to $k$th order in $x$''. In more precise terms, our argument follows the following outline:

\begin{enumerate}
\item In each  of the convolution algebras $\A(\F^k_\R)$, consider the nested sequence of ideals ideals $x^p \cdot \A(\F^k_\R)$, $p$ a positive integer, consisting of functions vanishing to order $p$ on the isotropy group  of $0$. 
\item Show that the quotient $\frac{\A(\F^k_\R)}{x^p \cdot \A(\F^k_\R)}$ is commutative if and only if $p \leq k$.
\item Show that any isomorphism between $\A(\F^k_\R)$  and $\A(\F^\ell_\R)$ would  necessarily map the ideal sequence of the first algebra onto the ideal sequence of the second.
\end{enumerate}

Our approach to the third step of this program requires us to solve a problem concerning the existence of factorizations in the convolution algebra $C_c^\infty(\R \rtimes_\phi \R)$. To do this, we need to confirm that the following famous theorem of Dixmier-Malliavin remains  valid when $G$ is replaced by the transformation groupoid $\R \rtimes_\phi \R$. 

\begin{thm}[3.1 Th\'{e}or\`{e}me, \cite{Dixmier-Malliavin}]
If $G$ is any Lie group, then every $f \in C_c^\infty(G)$ can be expressed in the form $f=g_1*h_1 + \ldots g_N*h_N$, for some $N$, where $g_i,h_i \in C_c^\infty(G)$ and $*$ denotes convolution with respect to (a choice of) Haar measure. 
\end{thm}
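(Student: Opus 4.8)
The strategy I would follow is that of Dixmier and Malliavin: reduce the statement to a one-parameter (hence essentially one-variable) factorization problem, which is where the genuine analytic content lies.

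First I would reduce the general Lie group $G$ to the one-parameter case. Fix a basis $X_1, \dots, X_n$ of the Lie algebra $\mathfrak{g}$ and the associated one-parameter subgroups $t \mapsto \exp(tX_i)$. Coordinates of the second kind, $(t_1, \dots, t_n) \mapsto \exp(t_1 X_1) \cdots \exp(t_n X_n)$, identify a neighbourhood of the identity with an open subset of $\R^n$ and, crucially, let one pass between smoothing along the individual subgroups and honest convolution on $G$: the pushforward under this chart of a product $g_1 \otimes \cdots \otimes g_n$ of one-variable bumps is an element of $C_c^\infty(G)$. The plan is then to factor $f$ one direction at a time. Granting the one-variable factorization below, one smooths first in the $X_1$-direction, writing $f$ as a finite sum of convolutions (along $\exp(tX_1)$) of $C_c^\infty$ data, then smooths each resulting term in the $X_2$-direction, and so on; after all $n$ directions are treated, reassembling via coordinates of the second kind exhibits $f$ as a finite sum $\sum_i F_i * w_i$ with $F_i, w_i \in C_c^\infty(G)$. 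Keeping the supports small throughout guarantees compactness of all factors.

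The remaining, and genuine, problem is one-dimensional: every $f \in C_c^\infty(\R)$ must be written as $\sum_{i=1}^N g_i * h_i$ with $g_i, h_i \in C_c^\infty(\R)$. Taking Fourier transforms converts convolution into multiplication, and the Paley--Wiener theorem identifies $\widehat{f}$ with an entire function of exponential type that decays rapidly along the real axis; the task becomes writing $\widehat{f} = \sum_i \widehat{g_i}\, \widehat{h_i}$ as a finite sum of products within this Paley--Wiener class. The naive single factorization $\widehat{f} = \widehat{g} \cdot (\widehat{f}/\widehat{g})$ is doomed, because any compactly supported $g$ has $\widehat{g}$ of exponential type and hence with infinitely many zeros, so the quotient leaves the class. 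This is precisely why a single product cannot suffice and a finite sum is forced.

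Overcoming these unavoidable zeros is the hard part and the technical heart of the theorem. Here I would follow Dixmier and Malliavin's device of constructing the factorization through an infinite summation that is subsequently collapsed into finitely many terms using the flexibility of smooth functions. One chooses a family of nonnegative Paley--Wiener building blocks --- for example $|\widehat{\psi_t}|^2$ built from dilates $\psi_t$ of a fixed bump --- whose zero sets can be staggered, solves the division problem separately on overlapping dyadic frequency bands where some block is bounded below, and then invokes a Borel-type lemma (realizing any prescribed sequence of derivatives at a point by a smooth function) to absorb the countably many band contributions into finitely many genuine $C_c^\infty(\R)$ factors while preserving the compact support of their inverse transforms. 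I expect this division-and-resummation step to be by far the most delicate component; by comparison, the reduction to one variable and the Fourier reformulation are routine bookkeeping once the one-dimensional factorization is secured.
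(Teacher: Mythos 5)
A preliminary caveat on the comparison: the paper does not prove this statement at all --- it is quoted, with attribution, as 3.1~Th\'{e}or\`{e}me of \cite{Dixmier-Malliavin}, and the variant the paper actually uses later (Proposition~\ref{DMspecial}) is imported from \cite{Francis[DM]}. So your proposal can only be measured against the original Dixmier--Malliavin argument. Measured that way, your global reduction is essentially theirs: smooth one direction at a time along the one-parameter subgroups $\exp(tX_i)$ and reassemble the directional smoothings into honest convolutions on $G$ via coordinates of the second kind. Note, however, that after the first step you are no longer factorizing a scalar function but a $C_c^\infty(G)$-valued one, so the iteration silently requires the one-variable theorem for functions valued in a Fr\'{e}chet space; this constrains which one-dimensional arguments are admissible.

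That is where your sketch has a genuine gap: the proposed one-dimensional core is not the Dixmier--Malliavin argument and, as described, would fail. (i) Localizing $\widehat f$ to dyadic frequency bands is incompatible with the Paley--Wiener class: a nonzero entire function of exponential type cannot vanish off a band, and multiplying $\widehat f$ by a band cutoff destroys entirety, so the corresponding space-side factors are no longer compactly supported; ``solving the division problem on bands where some block is bounded below'' therefore produces factors outside $C_c^\infty(\R)$. (ii) Even granting bandwise factorizations, you are left with an infinite series $\sum_j g_j * h_j$, and collapsing it into finitely many terms is the entire content of the theorem; Borel's theorem prescribes jets at a point and supplies no mechanism for such a resummation, and you give none. (iii) The zero-staggering scheme is intrinsically scalar and does not transplant to the vector-valued setting needed in the directional iteration. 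What Dixmier and Malliavin actually do avoids all three problems: given $f$, choose $c_n>0$ decreasing so fast, relative to the growth of the derivative norms of $f$, that $h \colonequals \sum_n (-1)^n c_n f^{(2n)}$ converges in $C_c^\infty(\R)$ --- differentiation, unlike band-limiting, preserves supports --- and prove, via their key lemma on entire functions of the form $\sum_n c_n z^{2n}$, that the infinite-order operator $\sum_n (-1)^n c_n (d/dt)^{2n}$ admits a compactly supported parametrix: functions $\gamma, \rho \in C_c^\infty(\R)$ supported in a prescribed neighbourhood of $0$ with $\sum_n (-1)^n c_n \gamma^{(2n)} = \delta_0 + \rho$, where $\delta_0$ is the Dirac measure at $0$. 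Convolving with $f$ yields $f = \gamma * h - \rho * f$, a two-term factorization whose proof uses only seminorm estimates and hence applies verbatim to Fr\'{e}chet-space-valued $f$, which is exactly what legitimizes the iteration over directions. Finally, your side remark that a single product ``cannot suffice'' does not follow from the zeros observation: that observation only rules out dividing by a fixed $g$; whether one convolution always suffices is Ehrenpreis's factorization problem and is not settled by it.
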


In the separate article \cite{Francis[DM]}, we showed that Dixmier-Malliavin's theorem holds quite generally for any Lie groupoid $G$.

Let us now describe the organization of this paper. In Section~2, we summarize relevant aspects of Androulidakis-Skandalis's work and justify our use   of the models $G(\F^k_\R) \cong \R\rtimes_\phi \R$, $C_c^\infty(\R \rtimes_\phi \R)$ and $C_0(\R) \rtimes_\phi  \R$ for the groupoids, smooth convolution algebras and C*-algebras of the foliations $\F^k_\R$.

In   Section~3 we review the known fact that the so-called \emph{Weiner-Hopf extension}
\[ 0 \to \K \to C_0(\R \cup \{+\infty\}) \rtimes_\tau \R \to C^*(\R) \to 0 \]
arising from the translation action  on a one-sided extension of the line is isomorphic to the (nonunital) Toeplitz extension. One  standard approach is to use Laguerre functions to produce an operator of index 1 and appeal to Brown-Douglas-Fillmore theory (see \cite{Green} and \cite{Rieffel}). We will instead use a suitably unitarized form of the Cayley transform to give an explicit conjugacy relating the Weiner-Hopf extension to (an uncompressed form of) the Toeplitz extension. This approach appears to harken back to \cite{Devinatz}. 
 
 In Section~4 we use the results of Section~3 to analyze the C*-algebras $C^*(\F^k_\R)$ and obtain the above mentioned result that their isomorphism types are determined by the parity of $k$. For each $k$, there is a natural  representation  $\pi_k$ of  $C^*(\F^k_\R)$ on $L^2(\R_-) \oplus L^2(\R_+)$, where $\R_- \colonequals (-\infty,0)$ and $\R_+ \colonequals (0,\infty)$, the nonsingular leaves of $\F^k_\R$.  We furthermore show that each $\pi_k$ is faithful and that the image of $\pi_k$ does not equal the image of $\pi_\ell$ when $k \neq \ell$.
 
Finally, in Section~5, we prove pairwise   nonisomorphism of the smooth convolution algebras $\A(\F^k_\R)$ by examining their quotients by the sequence of ideals $x^p \cdot \A(\F^k_\R)$, as discussed above. We furthermore describe the quotient by the ideal $x^\infty   \cdot \A(\F^k_\R) \colonequals \bigcap_{p \geq 1} x^p  \cdot \A(\F^k_\R)$. This results in an interesting algebra of formal series whose indeterminate does not commute with its coefficients.
 
 \section{Preliminaries}\label{Preliminaries}

In this section we describe the models we will be using for the holonomy groupoids $G(\F^k_\R)$, smooth algebras $\A(\F^k_\R)$ and C*-algebras $C^*(\F^k_\R)$ of the foliations $\F^k_\R$, and briefly justify their usage. 

Given  a smooth action of a Lie groupoid $H$ on a smooth manifold $M$, we take the \emph{transformation groupoid} $M \rtimes H$ to be the Lie groupoid whose underlying manifold is  $M \times H$ with groupoid operations defined as follows:
\begin{align*}
\text{Source: } && d(x,h) &= x \\
\text{Target: } &&  r(x,h) &= h x \\
\text{Multiplication: } &&  (h_1x, h_2) (x,h_1) &= (x,h_1h_2)
\end{align*}

\begin{lemma}
Let $H$ be a connected Lie group acting on a smooth manifold $M$ and let $\F$ be the  singular foliation of $M$ determined by this action. If the set of $x \in M$ with trivial isotropy is dense in $M$, then the canonical map (see Example~3.4 in \cite{AS[2007]}) from the transformation groupoid $M \rtimes H$ to the holonomy groupoid $G(\F)$  is an isomorphism. 
\end{lemma}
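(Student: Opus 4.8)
The plan is to prove that the canonical morphism $\Phi \colon M \rtimes H \to G(\F)$ of Example~3.4 in \cite{AS[2007]} is a bijection, and hence an isomorphism. Surjectivity is the easy half: since $H$ is connected it is generated by $\exp \mathfrak{h}$, where $\mathfrak{h} = \mathrm{Lie}(H)$, and the image under $\Phi$ of a one-parameter family $(x, \exp(t\xi))$ is the holonomy of the flow of the fundamental vector field of $\xi \in \mathfrak{h}$. Because these fundamental vector fields generate $\F$, their flows generate the path-holonomy atlas, hence $G(\F)$; writing an arbitrary $h \in H$ as a product of exponentials then exhibits every element of $G(\F)$ in the image. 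For injectivity, I would use that $\Phi$ covers $\id_M$, so it is injective if and only if its kernel is trivial, and that any kernel element has equal source and target and therefore lies in some isotropy group $H_x$. Everything thus reduces to the claim: \emph{if $h \in H_x$ and $\Phi(x,h) = 1_x$, then $h = 1$.}

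The heart of the matter is to unwind $\Phi(x,h) = 1_x$ using the construction of $G(\F)$ as the path-holonomy atlas modulo local equivalence of bisubmersions. The first observation is that $M \rtimes H$ is itself a bisubmersion of $\F$: the maps $d$ and $r$ are submersions, and the pullback module $d^{-1}(\F)$ equals the sum of the modules of $d$-vertical and $r$-vertical vector fields (and likewise for $r^{-1}(\F)$), which is exactly the defining property. Consequently $\Phi$ is the quotient map applied to this single bisubmersion, and $\Phi(x,h) = 1_x = [(x,1)]$ means precisely that $(x,h)$ and $(x,1)$ are locally equivalent: there is a local diffeomorphism $\Psi$ of $M \rtimes H$, defined on a neighborhood $\Omega$ of $(x,h)$, carrying $(x,h)$ to $(x,1)$ and satisfying $d \circ \Psi = d$ and $r \circ \Psi = r$. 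Since $\Psi$ preserves the source $d(x',h') = x'$, it must take the form $\Psi(x',h') = (x', k(x',h'))$ for a smooth $H$-valued function $k$; preservation of the target $r(x',h') = h' x'$ then forces $k(x',h')\, x' = h'\, x'$ for all $(x',h') \in \Omega$, while $\Psi(x,h) = (x,1)$ gives $k(x,h) = 1$.

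Density then finishes the argument at once. Let $M_0 \colonequals \{x' \in M : H_{x'} = \{1\}\}$ be the open set of points with trivial isotropy, which is dense by hypothesis. For $(x',h') \in \Omega$ with $x' \in M_0$, the identity $k(x',h')\, x' = h'\, x'$ combined with $H_{x'} = \{1\}$ forces $k(x',h') = h'$. The set of such $(x',h')$ is dense in $\Omega$ and $k$ is continuous, so $k(x',h') = h'$ throughout $\Omega$; evaluating at $(x,h)$ yields $h = k(x,h) = 1$. This proves the claim, whence $\Phi$ is injective. As $\Phi$ is a bijective morphism of groupoids induced by the open quotient map of the path-holonomy atlas, its inverse is continuous, so $\Phi$ is an isomorphism.

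I expect the single delicate step to be the middle one: verifying that $M \rtimes H$ qualifies as a bisubmersion and correctly extracting the function $k$ from the local equivalence $(x,h) \sim (x,1)$. Everything before it is bookkeeping about morphisms covering the identity, and everything after it is a one-line continuity argument powered by the density hypothesis; it is precisely the translation of ``trivial holonomy'' into the diffeomorphism $\Psi$ that carries the real weight, since this is where the definition of $G(\F)$ must be invoked in earnest.
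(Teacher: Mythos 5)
Your argument is correct in substance, but it takes a genuinely different route from the paper's. The paper disposes of the lemma in a few lines by invoking Proposition~3.9 of Androulidakis--Skandalis: it suffices to check that $M \rtimes H$ is an $s$-connected \emph{quasigraphoid}, i.e.\ that the only local bisections inducing the identity local diffeomorphism are the trivial ones; $s$-connectedness is immediate from connectedness of $H$, and the quasigraphoid property is exactly your density trick in one-variable form (a smooth $f : U \to H$ with $f(y)y = y$ for all $y$ must equal $1$ on the dense set of trivial-isotropy points, hence everywhere). You instead unwind the definition of $G(\F)$ as the path-holonomy atlas modulo equivalence of bisubmersions and prove bijectivity by hand: your extraction of the function $k$ from the local equivalence $(x,h) \sim (x,1)$ is the two-variable avatar of the paper's bisection computation, and the density step is word-for-word the same. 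What the paper's route buys is that the citation (which rests on Debord's theory of quasigraphoids) handles bijectivity and the smooth structure simultaneously, so nothing remains to be said about openness of the quotient map or smoothness of the inverse --- precisely the two points you treat briskly at the edges: your surjectivity sketch implicitly uses that the path-holonomy atlas may be built from fundamental vector fields (true, since they generate $\F$, but worth saying), and your closing sentence establishes only that $\Phi^{-1}$ is continuous, whereas an isomorphism of Lie groupoids requires smoothness, which here comes from the quotient map being a submersion in the almost regular case. What your route buys is self-containedness modulo the equivalence machinery of Androulidakis--Skandalis (their Proposition~2.10 and Corollary~2.11) and a transparent view of exactly where the density hypothesis kills holonomy. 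One small correction: the witness $\Psi$ of the equivalence $(x,h) \sim (x,1)$ is a locally defined \emph{morphism} of bisubmersions --- a smooth map commuting with $d$ and $r$ --- and need not be a local diffeomorphism as you assert; since your argument uses only smoothness of $\Psi$ and the two intertwining relations, nothing is lost, but the claim as stated is stronger than what the definition of the equivalence provides.
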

\begin{proof}
According to Proposition~3.9 in \cite{AS[2007]}, we just need to check that $M \rtimes H$ is an $s$-connected \emph{quasigraphoid}, as defined at the reference. Since $H$ is connected, the transformation groupoid is $s$-connected. Suppose that $x \in M$ and $f$  is a smooth map from an open neighbourhood $U$ of $x$ to $H$ which satisfies $f(y)y=y$ for all $y \in U$. Then, $f(y)=1$ for a dense set of $y$ in $U$ by hypothesis, and so $f$ is identically equal to $1$. This shows that  the only local bisections which induce the identity mapping are the trivial ones, i.e. $M \rtimes H$ is a quasigraphoid.
\end{proof}

 \begin{cor}\label{finitecri}
 Let $X$ be a complete vector field on a smooth manifold $M$. Let $\F$ be the singular foliation singly-generated by $X$. If $X$ has finitely many critical points and periodic orbits, then the transformation groupoid $M \rtimes_X \R$ determined by the flow of $X$ is canonically isomorphic to the holonomy groupoid $G(\F)$. 
 \end{cor}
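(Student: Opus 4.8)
The plan is to deduce this directly from the preceding Lemma, taking $H = \R$ acting on $M$ via the flow $\phi = (\phi_t)_{t \in \R}$ of the complete vector field $X$. This flow is a smooth action of the connected Lie group $\R$, and the singular foliation it determines is exactly the foliation singly-generated by $X$, so the canonical map in question is the one supplied by the Lemma. Since $\R$ is connected, the only hypothesis of the Lemma that still requires checking is that the set of points of $M$ with trivial isotropy is dense.

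To verify this, I would first pin down the isotropy groups. For $x \in M$, the isotropy group $\{ t \in \R : \phi_t(x) = x \}$ is a subgroup of $\R$ (because $\phi$ is an action) that is closed (because $t \mapsto \phi_t(x)$ is continuous). The closed subgroups of $\R$ are precisely $\{0\}$, the infinite cyclic groups $T\Z$ with $T > 0$, and $\R$ itself. These three possibilities correspond respectively to $x$ having trivial isotropy; to $x$ lying on a periodic orbit of minimal period $T$; and to $\phi_t(x) = x$ for all $t$, i.e.\ $X(x) = 0$, so that $x$ is a critical point. Consequently the set $Z \subset M$ of points with nontrivial isotropy is exactly the union of the critical points of $X$ with the periodic orbits of its flow.

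It then remains to show that $M \setminus Z$ is dense, equivalently that $Z$ has empty interior. By hypothesis $Z$ is a finite union of critical points and periodic orbits. Each critical point is a single point, and each periodic orbit is the injective continuous image of the compact circle $\R/T\Z$, hence a compact embedded one-dimensional submanifold; in particular $Z$ is a finite union of closed sets and is therefore closed. Since a finite union of closed nowhere-dense sets is again nowhere dense, and each piece of $Z$ is a submanifold of dimension at most one, the set $Z$ has empty interior as soon as $\dim M \geq 2$. In the one-dimensional situation relevant to the foliations $\F^k_\R$, namely $M = \R$, there are no periodic orbits and only finitely many critical points, so $Z$ is finite and the same conclusion holds. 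Thus $M \setminus Z$ is open and dense, the Lemma applies, and the canonical map $M \rtimes_X \R \to G(\F)$ is an isomorphism.

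I expect the only genuine content to lie in the second paragraph: correctly identifying the isotropy-nontrivial locus through the classification of closed subgroups of $\R$, and then confirming from finiteness together with lower dimensionality that this locus is nowhere dense. The remaining steps are a direct invocation of the Lemma.
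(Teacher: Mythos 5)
Your proof is correct and is exactly the argument the paper leaves implicit: the corollary is stated with no separate proof as an immediate consequence of the preceding Lemma, and your content --- classifying isotropy groups via the closed subgroups of $\R$, identifying the nontrivial-isotropy locus as the critical points together with the periodic orbits, and deducing nowhere-density from finiteness --- is precisely what the deduction requires. Your caution in dimension one is warranted rather than a defect: for $M = S^1$ with $X$ nowhere vanishing, the whole manifold is a single periodic orbit, the trivial-isotropy set is empty, and the statement as literally written fails, so restricting the one-dimensional case to $M = \R$ (the only case the paper uses, for the foliations $\F^k_\R$) is the right move.
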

 
 By the corollary above one has a canonical isomorphism 
 \begin{align}
 G(\F^k_\R) \cong \R \rtimes_\phi \R 
\end{align}
where $\phi$ is the flow of any complete vector field  generating $\F^k_\R$. The most obvious generator for the singular foliation $\F^k_\R$ is $x^k \frac{d}{dx}$, but note that this vector field is not complete when $k \geq 2$. One can, however, rescale it in order to get a complete vector field $X_k$ generating the same foliation. For example, one could  use $X_k =(1+x^2)^{-\frac{k-1}{2}}  x^k\frac{d}{dx}$ which resembles $x^k \frac{d}{dx}$ near the origin, but has sublinear growth. 
\begin{rmk}
In fact, it still makes perfect sense to talk about the transformation groupoid $M  \rtimes_X \R$ associated to smooth vector field even when $X$ is not complete. One just takes the underlying manifold of the groupoid to instead be the domain of the flow, an open subset of $\R \times M$, and defines the groupoid operations exactly as in the complete case. We could therefore get away with using the vector fields $x^k \frac{d}{dx}$ themselves rather than rescalings thereof.
\end{rmk}

The transformation groupoid $\R \rtimes_\phi \R$ has a natural right Haar system given by copying the usual Lebesgue measure on each source fiber. The convolution product determined on $C_c^\infty(\R \rtimes_\phi \R)$ by this Haar system is as follows:  
\begin{align}
(f  * g)(x,t) = \int_\R f(\phi_s(x),t-s,)g(x,s) \ ds.
\end{align}
Note that changing the smooth Haar system results in a canonically isomorphic product. We also have an adjoint operation  on  $C_c^\infty(\R \rtimes_\phi \R)$ given by
\[ f^*(x,t) = \overline{ f(\phi_t(x),-t)}. \]
The $*$-algebra structure on $C_c^\infty(\R \rtimes_\phi \R)$ defined by these operations is canonically isomorphic to the smooth convolution algebra $\A(\F^k_\R)$ defined  in Section~4.3 of \cite{AS[2007]} (one needs to translate from the language of densities to the language of functions).

One also has that the C*-algebra  $C^*(\F^k_\R)$ is canonically isomorphic to the crossed product C*-algebra $C_0(\R) \rtimes_\phi \R$, where $\phi$ denotes the flow of any complete vector field generating $\F^k_\R$ as well as the corresponding action of $\R$ on $C_0(\R)$ defined by $(\phi_t f)(x) = f(\phi_{-t}(x))$. Let us briefly explain why this is so.  Firstly, it is well-known that there is a canonical isomorphism  $\Psi  : C^*(\R \rtimes_\phi \R) \to C_0(\R) \rtimes_\phi \R$ from the usual groupoid C*-algebra, in the sense of \cite{Renault[BOOK]},   to the crossed product C*-algebra. The groupoid C*-algebra  is the completion of $C_c^\infty(\R \rtimes_\phi \R)$ with respect to the largest C*-norm dominated by the $L^1$-norm
\begin{align}\label{grpd1}
 \|f\|_{1,\text{Groupoid}} = \sup_{x \in \R} \left\{ \int_\R |f(x,t) |\ dt, \int_\R |f^*(x,t)| \ dt \right\}. 
 \end{align}
 The isomorphism $\Psi$ is determined by its restriction to $C_c^\infty(\R \rtimes_\phi \R)$ which has image contained in the dense subalgebra $C_c(\R, C_0(\R)) \subset C_0(\R) \rtimes_\phi \R$ and is given by 
\[ (( \Psi f)(t))(x) = f(\phi_{-t}(x),t). \]
In Section 4.4 of \cite{AS[2007]}, the C*-algebra $C^*(\F^k_\R)$ is defined as the completion of $\A(\F^k_\R)$ with respect to the largest C*-norm dominated by an $L^1$-norm  $\| \cdot \|_{1,AS}$ defined by a choice of Riemannian metric\footnote{The choice of metric does not affect  the completion.} on the base manifold (\cite{AS[2007]}, Definition~4.8).   Choosing the standard Riemann metric on $\R$, one may check that Androulidakis-Skandalis's $L^1$-norm becomes the norm on $C_c^\infty(\R \rtimes_\phi \R)$ defined by 
\begin{align}\label{AS1}
\|f\|_{1,\text{AS}} = \sup_{x \in \R} \left\{ \int_\R |f(x,t) | \beta(x,t) \ dt, \int_\R |f^*(x,t)| \beta(x,t) \ dt \right\}
\end{align}
where
 \[ \beta(x,t) = \begin{cases}
 (\phi_t'(x))^{1/2} & \text{ if } x \neq 0 \\
 1 & \text{ if } x =0. \end{cases}  \]
 One may wonder whether these (different) $L^1$-norms  \eqref{grpd1} and \eqref{AS1} really determine the same C*-completions. This is so, and follows from the fact that the map $\beta : \R \rtimes_\phi \R \to (0,\infty)$ appearing in \eqref{AS1} is a 1-cocycle, i.e. satisfies $\beta( \gamma_1 \gamma_2) = \beta(\gamma_1) \beta(\gamma_2)$, where $\gamma_1,\gamma_2 \in \R \rtimes_\phi \R$. One has that the Hilbert space representations dominated by $\|\cdot \|_{1,\text{Groupoid}}$ are the same as the ones obtained by integrating representations of the underlying groupoid. This is shown in \cite{Renault}, 1.7~Proposition.  Following the same argument, one may check that the same remains true when  $\|\cdot\|_{1,\text{Groupoid}}$ is adjusted by a 1-cocycle; the 1-cocycle drops out of the main estimate given on pp.~50 of \cite{Renault}.

\section{The Weiner-Hopf extension}\label{WHsection}

In this section we explain  how a suitably unitarized form of the Cayley transform can be used to  relate the  Weiner-Hopf extension (defined below)  to  the more familiar Toeplitz extension.  The precise statements are given in Theorem~\ref{mainext} and Corollary~\ref{mainiso}.

Form a one-sided extension $\Ri$ of the  real line by adjoining  a positive infinity, but not a negative infinity. Let $M$ denote the multiplication representation of $C_0(\Ri)$ on $L^2(\R)$. Let $\lambda$ denote the regular representation of $\R$ on $L^2(\R)$ as well as its integrated form,   a representation of $C^*(\R)$. On $L^1(\R) \subset C^*(\R)$, this is the usual convolution representation $\lambda(f) \xi =  f * \xi$. 

\begin{defn}
The \emph{Weiner-Hopf} algebra is the C*-algebra $\T_\R$ on $L^2(\R)$ generated by products $M(f)\lambda(g)$ where $f \in C_0(\Ri)$ and $g \in C^*(\R)$. There is a $*$-homomorphism $\sigma_\R : \T_\R \to  C^*(\R)$ called the \emph{symbol map},  determined by $\sigma_\R( M(f) \lambda(g)) = f(+\infty) g$ whose kernel is $\K(L^2(\R))$, the ideal of compact operators. The resulting extension
\begin{align*}
\begin{tikzcd}[ampersand replacement=\&]
 0 \ar[r]  \& \K(L^2(\R)) \ar[r] \&  \T_\R \ar[r,"{\sigma_\R}"] \&  C^*(\R) \ar[r] \& 0 
 \end{tikzcd}
 \end{align*}
will be referred to as the \emph{Weiner-Hopf extension}. 
\end{defn}

That the symbol map $\sigma_\R$  is defined at all will be a consequence of the discussion to follow, in which the Weiner-Hopf algebra will be related to the Toeplitz algebra. For the moment, let us assume $\sigma_\R$ exists and obtain a crossed product description of the Weiner-Hopf algebra.

Let $\R$ act  on the extended line $\Ri$ by translation, fixing the point $+\infty$. Let  $\tau$ be the corresponding action of $\R$ on $C_0(\Ri)$ defined by
\begin{align*}
(\tau_tf)(x) = f(x-t). 
\end{align*}
 Evaluation at  $+\infty$  yields an $\R$-equivariant exact sequence of commutative C*-algebras:
\begin{align*}
\begin{tikzcd}[ampersand replacement=\&] 
0 \ar{r} \& C_0(\R) \ar{r} \& C_0(\Ri) \ar[r]  \&  \C \ar{r} \& 0.
\end{tikzcd}
\end{align*}
Taking the crossed product of the above sequence by $\R$, we obtain another exact sequence of C*-algebras.
\begin{align*}
\begin{tikzcd}[ampersand replacement=\&] 
0 \ar{r} \& C_0(\R)  \rtimes_\tau \R \ar{r} \& C_0(\Ri) \rtimes_\tau \R \ar[r]  \&  C^*(\R) \ar{r} \& 0.
\end{tikzcd}
\end{align*}
This sequence is of some importance  in noncommutative geometry. For example, in   \cite{Rieffel},  it is used  to  realize the Connes-Thom isomorphism of \cite{Connes[Thom]} as a boundary map in K-theory.

Recall that a (maximal) crossed product C*-algebra $A \rtimes_\alpha G$ contains canonical copies of $A$ and $C^*(G)$ in its multiplier algebra and that $A \rtimes_\alpha G$ is generated by  elementary products  $a \times b$, where $a \in A$,  $b \in C^*(G)$. Each  nondegenerate  representation  $\pi$ of $A \rtimes_\alpha G$ arises uniquely from a \emph{covariant pair} $(\pi_A, \pi_G)$ where $\pi_A$ is a representation of $A$, $\pi_G$ is a unitary representation of $G$ and $\pi_G(t) \pi_A(a) \pi_G(t)^{-1} = \pi_A( \alpha_t(a))$ holds for all $a \in A$, $t \in G$. The corresponding representation $\pi$ of $A \rtimes_\alpha G$ is determined by $\pi(a \times b) = \pi_A(a) \pi_G(b)$ for all $a \in A$, $b \in C^*(G)$. Here we abuse notation, denoting the integrated form of $\pi_G$, a representation of $C^*(G)$, by the same symbol as its inducing unitary representation. For more information on crossed products, one may refer to the extensive survey \cite{Williams}.

There is a natural representation  $\pi : C_0(\Ri) \rtimes_\tau \R \to \mathbb{B}(L^2(\R))$ coming from the covariant pair $(M,\lambda)$  where, as above, $M$ denotes the multiplication  representation and $\lambda$ denotes  the regular representation of $\R$.  The representation  $\pi$ is thus determined  on elementary products by
\begin{align}\label{pi}
\pi(f \times g) = M(f) \lambda(g)  &&  f \in C_0(\Ri),   g \in C^*(\R).
\end{align}
By construction, the image of the representation $\pi$ is the Weiner-Hopf algebra $\T_\R$. 
It is well-known that the restriction of $\pi$ to the ideal $C_0(\R) \rtimes_\tau \R$ is a faithful representation onto the C*-algebra $\K(L^2(\R))$ of compact operators. This is  a C*-algebraic formulation of the \emph{Stone-von Neumann theorem}, describing  the  representation theory of the canonical commutation relations. Actually, one  has more generally that  $C_0(G) \rtimes G \cong \K(L^2(G))$    for any locally compact group $G$. One can then see that $\pi$ is also faithful on $C_0(\Ri) \rtimes_\tau \R$ by consideration of the following commutative diagram:
\begin{align*}
\begin{tikzcd}[ampersand replacement=\&] 
0 \ar[r] \& C_0(\R)\rtimes_\tau \R \ar[r] \ar[d,"\pi"]\& C_0(\Ri)\rtimes_\tau\R \ar[d,"\pi"] \ar[r] \& C^*(\R) \ar[r] \ar[d,equals]\& 0 \\
0 \ar[r] \& \K(L^2(\R)) \ar[r] \& \T_\R \ar[r,"{\sigma_\R}"] \& C^*(\R) \ar[r] \& 0.
\end{tikzcd}
\end{align*}
The vertical maps on the right and the left are isomorphisms, whence the one in the middle is as well.

We shall review the known fact that the Weiner-Hopf extension is isomorphic to  the (nonunital form of the) Toeplitz extension. In \cite{Rieffel} as well as  \cite{Green} (Lemma~6),  this is shown by using Laguerre functions to produce an operator of index 1 and appealing to Brown-Douglas-Fillmore theory. However,  we feel that a clearer understanding of the Weiner-Hopf  extension is reached by using  the Cayley and Fourier transform in concert to explicitly relate it to the Toeplitz extension. This observation is not new, but  deserves to be better known. The approach by way of the Cayley transform seems to originate from  \cite{Devinatz} (see pp.~82-83 therein). See also the historical remark on page 2 of \cite{Douglas} as well as  Proposition~3.7.1 of \cite{Torpe}.

Let $H^2(S^1)$ denote the \emph{Hardy space} of the circle,  the closed subspace of $L^2(S^1)$ spanned by the basis vectors $\frac{1}{\sqrt{2 \pi}} z^n$ where $n \geq 0$.  One may also characterize the elements  of $H^2(S^1)$ as the boundary values of a  corresponding space of holomorphic functions on the open disk.  Recall that the multiplication representation $M :C^1(S^1) \to \B(L^2(S^1))$ commutes modulo compact operators with $P_{H^2(S^1)}$, the projection onto the Hardy space. The \emph{Toeplitz operator} with symbol $f \in C(S^1)$ is defined as $T_f \colonequals P_{H^2(S^1)}M(f)P_{H^2(S^1)}$, the compression of  $M(f)$ to the Hardy space. The \emph{Toeplitz algebra} is the C*-algebra $\T$ on $H^2(S^1)$ generated by the Toeplitz operators. In fact,  $\T$ is singly-generated by the unilateral shift operator $T_z$.  The Toeplitz algebra sits in a canonical extension
\begin{align*}
\begin{tikzcd}[ampersand replacement=\&]
0 \ar[r] \& \K(H^2(S^1)) \ar[r] \& \T  \ar[r,"\sigma"] \& C(S^1) \ar[r] \& 0 
\end{tikzcd}
\end{align*}
called the \emph{Toeplitz extension}. The homomorphism $\sigma$ is called the \emph{symbol map} and satisfies $\sigma(T_f)=f$ for all $f \in C(S^1)$. Indeed, the assignment $f \mapsto T_f$ is a completely positive splitting for $\sigma$.   
One may also consider the \emph{nonunital Toeplitz algebra} $\T_0$ defined as the preimage by $\sigma$ of the codimension one ideal $C_0(S^1) \subset C(S^1)$ consisting of functions vanishing at $1 \in S^1$. It is singly-generated by $T_{1-z}$ and fits into the  \emph{nonunital Toeplitz extension}:
\begin{align*}
\begin{tikzcd}[ampersand replacement=\&]
0 \ar[r] \& \K(H^2(S^1)) \ar[r] \& \T_0  \ar[r,"\sigma"] \& C_0(S^1) \ar[r] \& 0.
\end{tikzcd}
\end{align*}
It is sometimes desirable to not compress to the Hardy space and work instead with
\begin{align}\label{Tbar}
\overline \T_0 \colonequals \T_0 + \K(L^2(S^1)).
\end{align}
There is a unique extension $\overline \sigma$ of the symbol map to $\overline \T$ which has $\ker(\overline \sigma) = \K(L^2(S^1))$. From these definitions, we have the following commutative diagram in which the vertical maps are inclusions (or, in the case of the rightmost map, an equality).
\begin{align*}
\begin{tikzcd}[ampersand replacement=\&] 
0 \ar{r} \& \K(H^2(S^1))  \ar{r} \ar[d,hookrightarrow] \& \T_0 \ar[d,hookrightarrow] \ar[r,"{\sigma}"]  \&  C_0(S^1) \ar{r} \ar[d,equals] \& 0 \\
0 \ar{r} \& \K(L^2(S^1))  \ar{r} \& \overline{\T_0}  \ar[r,"{\overline \sigma}"]  \&  C_0(S^1) \ar[r]   \& 0  
\end{tikzcd} 
\end{align*}
A convenient feature of this uncompressed form of the (nonunital) Toeplitz extension is that all three terms in the exact sequence on the bottom are represented on the same Hilbert space $L^2(S^1)$.  Brown-Douglas-Fillmore theory gives, however, that the extensions on the top and bottom of the above diagram are in fact the same. 
\begin{lemma}\label{topiso}
There exists an isometric isomorphism $V : H^2(S^1) \to L^2(S^1)$ such that $\mathrm{Ad}_V$ carries $\T_0$ onto $\overline \T_0$ and the following diagram is commutative:
\begin{align*}
\begin{tikzcd}[ampersand replacement=\&] 
0 \ar{r} \& \K(H^2(S^1))  \ar{r} \ar[d,"{\mathrm{Ad}_V}"] \& \T_0 \ar[d,"{\mathrm{Ad}_V}"] \ar[r,"\sigma"]  \&  C_0(S^1) \ar{r} \ar[d,equals] \& 0 \\
0 \ar{r} \& \K(L^2(S^1))  \ar{r} \& \overline{\T_0}  \ar[r,"{\overline \sigma}"]  \&  C_0(S^1) \ar[r]   \& 0. 
\end{tikzcd} 
\end{align*}
\end{lemma}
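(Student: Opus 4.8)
The plan is to recognize both rows as \emph{essential} extensions of $C_0(S^1)$ by the compacts and then to invoke the uniqueness half of Brown--Douglas--Fillmore theory. Write $\mathcal{Q}(\H)=\B(\H)/\K(\H)$ for the Calkin algebra, $\pi_\H\colon\B(\H)\to\mathcal{Q}(\H)$ for the quotient map, and set $\H_-\colonequals H^2(S^1)^{\perp}$ inside $L^2(S^1)=H^2(S^1)\oplus\H_-$. Since $\ker\sigma=\K(H^2(S^1))\subset\T_0$, the symbol map identifies $\T_0/\K(H^2(S^1))$ with $C_0(S^1)$ and so induces an \emph{injective} Busby homomorphism $C_0(S^1)\to\mathcal{Q}(H^2(S^1))$; concretely $\T_0=\pi_{H^2(S^1)}^{-1}(\text{its image})$, and $\sigma$ is the map it induces on passing to that image. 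The identical discussion on the bottom row produces an injective Busby homomorphism $C_0(S^1)\to\mathcal{Q}(L^2(S^1))$ with $\overline{\T_0}=\pi_{L^2(S^1)}^{-1}(\text{its image})$ and $\overline\sigma$ induced likewise. In particular both extensions are essential and both Hilbert spaces are separable and infinite-dimensional.

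First I would record the elementary observation that makes the two invariants visibly agree: a lift in $\overline{\T_0}$ of $g\in C_0(S^1)$ may be taken to be $T\oplus 0$ with $T\in\T_0$ a lift of $g$, so the bottom Busby map is the top one followed by the block inclusion $\mathcal{Q}(H^2(S^1))\hookrightarrow\mathcal{Q}(L^2(S^1))$ coming from $L^2(S^1)=H^2(S^1)\oplus\H_-$. Working in the unitization $C(S^1)$ to make the Fredholm index transparent, the index invariant of the top extension is computed from the lift $T_z$ of the coordinate unitary $z$, while a lift of $z$ for the bottom extension is $T_z\oplus\id_{\H_-}$. The extra summand $\id_{\H_-}$ is invertible and hence contributes index $0$, so the integer index invariants of the two essential extensions coincide; equivalently they define the same class in $\mathrm{Ext}(C_0(S^1))$.

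With the invariants matched, the uniqueness theorem (in its strong form, via Voiculescu's absorption theorem, valid since $C_0(S^1)$ is nuclear) yields a unitary $V\colon H^2(S^1)\to L^2(S^1)$ whose induced Calkin isomorphism $\widehat{\mathrm{Ad}}_V\colon\mathcal{Q}(H^2(S^1))\to\mathcal{Q}(L^2(S^1))$ carries the top Busby map onto the bottom one. It then remains to convert this into the stated assertions. Using the pullback descriptions $\T_0=\pi_{H^2(S^1)}^{-1}(\cdots)$ and $\overline{\T_0}=\pi_{L^2(S^1)}^{-1}(\cdots)$ together with $\pi_{L^2(S^1)}\circ\mathrm{Ad}_V=\widehat{\mathrm{Ad}}_V\circ\pi_{H^2(S^1)}$, one gets $\mathrm{Ad}_V(\T_0)=\overline{\T_0}$ at once, while $\mathrm{Ad}_V(\K(H^2(S^1)))=\K(L^2(S^1))$ is automatic (giving the left square). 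Commutativity of the right square, $\overline\sigma\circ\mathrm{Ad}_V=\sigma$ on $\T_0$, follows because $\sigma$ and $\overline\sigma$ are precisely the maps induced by $\pi_{H^2(S^1)}$ and $\pi_{L^2(S^1)}$ on the injective Busby images, and $\widehat{\mathrm{Ad}}_V$ intertwines these by construction.

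The main obstacle is the passage from equality of $\mathrm{Ext}$-classes to an honest unitary $V$ between the two \emph{different} spaces $H^2(S^1)$ and $L^2(S^1)$: this is exactly where the \emph{strong} (rather than merely modulo-compacts, weak) form of the uniqueness theorem is needed, and where some care is required because $C_0(S^1)$ is nonunital, so that the Fredholm index should be read off in the unitization $C(S^1)$. By contrast the remaining ingredients---essentialness of both extensions, the pullback descriptions of $\T_0$ and $\overline{\T_0}$, and the final diagram chase---are routine.
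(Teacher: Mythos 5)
Your proposal is correct and takes essentially the same route as the paper: both arguments match the index invariant ($-1$, computed from the lift $T_z$ on top versus $T_z \oplus \id$ on the bottom) and then invoke BDF uniqueness (via Voiculescu absorption) to produce the unitary $V$. The paper merely packages this at the operator level---applying the classification of essentially normal operators to the shift $S = T_z$ and $\overline S = S \oplus \id$ and finishing by noting $\T_0$ is singly generated by $T_{1-z}$---where you instead work with the Busby invariants and conclude by a pullback diagram chase, a cosmetic difference.
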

\begin{proof}
Let $S=T_z$, the unilateral shift operator on $H^2(S^1)$, and let $\overline S$ be the direct sum of $S$ with the identity operator on the orthogonal complement of $H^2(S^1)$ in $L^2(S^1)$. Since $S$ and $\overline S$ are both isometries of index $-1$, the classification of essentially normal operators implies the existence of an isometric isomorphism $V : H^2(S^1) \to L^2(S^1)$ such that $VS V^{-1}$ equals $\overline S$ modulo compact operators\footnote{Obviously $S$ and $\overline S$ are not actually conjugate; $S$ has no eigenvectors, whereas $\overline S$ acts as  the identity on an infinite-dimensional subspace.}. Thus, $VT_{1-z}V^{-1}$ equals $T_{1-z}$, modulo compact operators. Recalling that $\T_0$ is singly-generated by $T_{1-z}$ and contains $\K(H^2(S^1))$, the result follows. 
\end{proof}

Recall that one similarly has a \emph{Hardy space} $H^2(\R) \subset L^2(\R)$ for the real line. As in the case of the circle, $H^2(\R)$ admits two descriptions: a harmonic analysis description as the space of $\xi \in L^2(\R)$ whose Fourier transform  has support contained in $[0,\infty)$, and a complex function theory description as the space of boundary values of a corresponding space of holomorphic functions on the upper half-plane. Recall the \emph{Cayley transform}
\begin{align} 
w : \R \to S^1 \setminus\{1\} && w(t) = \frac{t-i}{t+i}.
\end{align}
Like any other diffeomorphism  $\R \to S^1 \setminus \{1\}$, the Cayley transform   determines an isometric isomorphism $L^2(S^1) \to L^2(\R)$ given by  $\xi \mapsto  |w'|^{1/2} (\xi \circ w)$. Recall however that, viewed as a Mob\"{i}us transformation, $w$ carries the upper half plane to the unit disk. Furthermore, its derivative   $w'(t) = \frac{2i}{(t+i)^2}$ is the square of a function holomorphic in the upper half-plane. One is therefore encouraged to adjust the phase of the na\"{i}ve identification $L^2(S^1) \to L^2(\R)$ given above and instead use the isometric isomorphism $W$ defined by the following  meromorphic formula:
\begin{align}\label{cayley}
W : L^2(S^1) \to L^2(\R) &&  (W\xi)(t) =  \frac{\sqrt{2}}{t+i} \xi(w(t)).
\end{align}
Although this adjustment to the phase might seem an inconsequential matter, it in fact leads to $W$ having the highly useful property of mapping $H^2(S^1)$ onto $H^2(\R)$. Indeed, 
\begin{align*} 
\begin{tikzcd}[ampersand replacement=\&]
\left\{  \tfrac{1}{\sqrt{2\pi}} z^n  : n \geq 0 \right\} \ar[r,"W"] \&  \left\{ \frac{1}{\sqrt{\pi}} \left(\frac{t-i}{t+i}\right)^n \frac{1}{t+i} : n \geq 0  \right\} \\
\left\{   \tfrac{1}{\sqrt{2\pi}} z^n  : n < 0 \right\} \ar[r,"W"] \& \left\{ \frac{1}{\sqrt{\pi}} \left(\frac{t+i}{t-i}\right)^n \frac{1}{t-i}:  n \geq 0 \right\} . 
\end{tikzcd}
 \end{align*}

We also have that $W$  intertwines the multiplication representation of $C_0(\R)$ on $L^2(\R)$ with the multiplication representation of $C_0(S^1)$ on $L^2(S^1)$ in the expected way, because the phase adjustment is itself implemented by a (circle-valued) multiplication operator. We summarize the properties of $W$ just discussed in the following lemma.

\begin{lemma}\label{cayleyprop}
Define $W : L^2(S^1)\to L^2(\R)$ by $(W\xi)(t) = \frac{\sqrt{2}}{t+i}\xi \left(\frac{t-i}{t+i}\right)$. Then:
\begin{enumerate}
\item $W$ is an isometric isomorphism $L^2(S^1) \to L^2(\R)$,
\item $W$ maps $H^2(S^1)$ onto $H^2(\R)$,
\item $W$ satisfies $WM(f)W^{-1} = M(f \circ w)$ for all $f \in C_0(S^1)$, where $w(t) = \frac{t-i}{t+i}$ is the Cayley transform.
\end{enumerate}
\end{lemma}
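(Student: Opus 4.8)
The plan is to verify the three claims in turn, treating the isometry and the multiplication-intertwining property first, since these are formal, and reserving the surjectivity onto $H^2(\R)$ as the genuinely analytic point.

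For claim (1), I would argue by a change of variables. The modulus of the phase factor satisfies $\left|\tfrac{\sqrt 2}{t+i}\right|^2 = \tfrac{2}{t^2+1} = |w'(t)|$, where $w'(t) = \tfrac{2i}{(t+i)^2}$. Writing $w(t) = e^{i\theta(t)}$ and using that $w$ is an orientation-preserving diffeomorphism of $\R$ onto $S^1 \setminus \{1\}$, one gets $d\theta = |w'(t)|\,dt = \tfrac{2}{t^2+1}\,dt$. Hence $\int_\R |(W\xi)(t)|^2\,dt = \int_\R \tfrac{2}{t^2+1}|\xi(w(t))|^2\,dt = \int_{S^1}|\xi|^2\,d\theta = \|\xi\|_{L^2(S^1)}^2$, the point being precisely that the squared modulus of the phase factor is the Jacobian converting Lebesgue measure on $\R$ to arc-length measure on the circle (the single omitted point $1$ being null). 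Thus $W$ is isometric; surjectivity follows by exhibiting the explicit inverse built from $w^{-1}$ and the reciprocal phase, so $W$ is a unitary isomorphism.

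Claim (3) is then immediate and essentially formal: since $M(f)$ acts as pointwise multiplication by the scalar-valued function $f$, the scalar phase factor $\tfrac{\sqrt 2}{t+i}$ commutes past it, and one computes directly that $(W M(f)\zeta)(t) = f(w(t))\,(W\zeta)(t)$ for $\zeta \in L^2(S^1)$, i.e. $W M(f) W^{-1} = M(f\circ w)$. No cancellation beyond this is needed.

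The substance is in claim (2). The images of the orthonormal basis $\{\tfrac{1}{\sqrt{2\pi}}z^n : n\geq 0\}$ of $H^2(S^1)$ under $W$ are the functions displayed just above the lemma, so by claim (1) they form an orthonormal system inside $L^2(\R)$; it remains to identify their closed span with $H^2(\R)$. I would use the complex-analytic characterization of the Hardy spaces: $H^2(S^1)$ consists of boundary values of functions holomorphic on the open unit disk $\mathbb{D}$ with bounded $L^2$ means, and $H^2(\R)$ of boundary values of functions holomorphic on the open upper half-plane $\mathbb{H}$ with $\sup_{y>0}\int_\R |F(x+iy)|^2\,dx < \infty$. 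Since the Cayley transform is a biholomorphism $\mathbb{H}\to\mathbb{D}$ and $\tfrac{\sqrt 2}{t+i}$ is the boundary value of a nowhere-vanishing holomorphic function on $\mathbb{H}$, the assignment $\xi\mapsto \tfrac{\sqrt 2}{t+i}\xi(w(t))$ carries disk-holomorphic functions to upper-half-plane-holomorphic ones; combined with the isometry of claim (1) for the $L^2$ bound, this gives $W(H^2(S^1))\subseteq H^2(\R)$. The reverse inclusion follows from the symmetric argument applied to $W^{-1}$, using the inverse Cayley transform $\mathbb{D}\to\mathbb{H}$ and the reciprocal (again nowhere-vanishing, holomorphic) phase; equality then follows. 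I expect this transfer of holomorphicity and the boundary-value bookkeeping to be the main obstacle, since everything else is a formal consequence of the unitarity and the multiplication-operator structure. As a concrete alternative and useful cross-check, one may instead invoke the Fourier characterization $H^2(\R) = \{\xi : \widehat\xi \text{ supported in } [0,\infty)\}$ and verify that the displayed functions are, up to normalization, the images of the Laguerre functions, whose completeness in $H^2(\R)$ is classical; this is the route behind the references cited in this section, but it is more computational than the conformal argument.
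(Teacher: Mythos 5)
Your proposal is correct, and parts (1) and (3) coincide with the paper's reasoning: the paper also gets isometry from the change-of-variables fact that any diffeomorphism $\R\to S^1\setminus\{1\}$ induces a unitary $\xi\mapsto|w'|^{1/2}(\xi\circ w)$ (your computation $|\tfrac{\sqrt2}{t+i}|^2=|w'(t)|$ is exactly this), and it justifies the intertwining in (3) by the same observation that the phase correction is itself a circle-valued multiplication operator. The divergence is in part (2), where the paper gives no formal proof but instead displays the images of the standard basis: $W$ sends $\tfrac{1}{\sqrt{2\pi}}z^n$ for $n\geq 0$ to $\tfrac{1}{\sqrt\pi}\bigl(\tfrac{t-i}{t+i}\bigr)^n\tfrac{1}{t+i}$ and the basis vectors with $n<0$ to $\tfrac{1}{\sqrt\pi}\bigl(\tfrac{t+i}{t-i}\bigr)^n\tfrac{1}{t-i}$, $n\geq 0$; the first family lies in $H^2(\R)$ and the second in its orthogonal complement (boundary values holomorphic in the lower half-plane), so unitarity forces $W(H^2(S^1))=H^2(\R)$ without any separate surjectivity argument. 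This is essentially your ``alternative cross-check'' route, made painless by Lemma~\ref{fourierprop}(2), since the Fourier-support characterization reduces membership of $\tfrac{1}{t+i}$ and its relatives in $H^2(\R)$ to a one-line contour computation. Your primary route --- transferring holomorphy through the biholomorphism $\mathbb{H}\to\mathbb{D}$ and handling the reverse inclusion by the symmetric argument for $W^{-1}$ with its nonvanishing holomorphic phase $\tfrac{\sqrt2\,i}{1-z}$ --- is the classical textbook proof and is more conceptual (it explains \emph{why} the phase choice works, namely that $w'$ is the square of a half-plane-holomorphic function), whereas the paper's basis computation is shorter and self-verifying. One gloss in your write-up deserves a sentence of care: the boundary-norm isometry of claim (1) does not by itself yield the defining bound $\sup_{y>0}\int_\R|F(x+iy)|^2\,dx<\infty$ for the holomorphic extension; you must either carry out the horocycle bookkeeping you flag, or fall back on the Fourier-support characterization --- you acknowledge this as the main obstacle, so it is a point to be expanded rather than a gap in the plan.
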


We use the following convention for the Fourier transform:
\begin{align}\label{fourier}
\widehat f (s) = \int_\R f(t) e^{-2 \pi i st} \ dt && f  \in C_c(\R).
\end{align}
The Fourier transform  extends uniquely to a C*-algebra isomorphism, written  simply as
\begin{align*} f \mapsto \widehat f : C_0(\R) \to C^*(\R).
\end{align*}
The Fourier transform also extends uniquely to an isometric isomorphism of Hilbert spaces, which we write as 
\begin{align*}
U  : L^2(\R) \to L^2(\R)
\end{align*}
in order to notationally distinguish these slightly different concepts. The key properties of the Fourier transform needed are gathered below. 

\begin{lemma}\label{fourierprop}
Let $U : L^2(\R) \to L^2(\R)$ be the Fourier transform defined by $(U\xi)(s) = \int_\R \xi(t) e^{-2\pi i st} \ dt$ for $\xi \in L^1(\R) \cap L^2(\R)$ and extended by continuity. Then:
\begin{enumerate}
\item $U$ is an isometric isomorphism $L^2(\R) \to L^2(\R)$,
\item $U$ maps $H^2(\R)$ onto $L^2([0,\infty))$,
\item $U$ satisfies $UM(f)U^{-1} = \lambda(\widehat f)$ for all $f \in C_0(\R)$, where $f \mapsto \widehat f$ is the Fourier isomorphism $C_0(\R) \to C^*(\R)$ and $\lambda$ is the regular (i.e. convolution) representation of $C^*(\R)$ on $L^2(\R)$. 
\end{enumerate}
\end{lemma}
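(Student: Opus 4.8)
The plan is to verify the three items using the three pillars of $L^2$ Fourier analysis---the Plancherel theorem, the Fourier inversion formula, and the convolution theorem---in each case establishing the relevant identity first on a convenient dense subspace, such as the Schwartz class $\mathcal{S}(\R)$ or $C_c^\infty(\R)$, where all integrals converge absolutely and Fubini's theorem applies, and then extending by continuity. For (1), Plancherel's theorem gives $\|U\xi\|_2 = \|\xi\|_2$ for $\xi \in L^1(\R) \cap L^2(\R)$; since $L^1(\R) \cap L^2(\R)$ is dense in $L^2(\R)$, the densely-defined isometry $U$ extends uniquely to an isometry of $L^2(\R)$, and the Fourier inversion formula supplies the explicit two-sided inverse $(U^{-1}\eta)(t) = \int_\R \eta(s) e^{2\pi i st} \, ds$ on the same dense subspace, so that $U$ is an isometric isomorphism.

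For (2), the claim is essentially a restatement of the harmonic-analysis description of the Hardy space. By definition $H^2(\R)$ consists of those $\xi \in L^2(\R)$ whose Fourier transform is supported in $[0,\infty)$, so with $U$ as above one has $U(H^2(\R)) \subset L^2([0,\infty))$ immediately; surjectivity onto $L^2([0,\infty))$ follows from (1), since any $\eta \in L^2([0,\infty))$ is $U(U^{-1}\eta)$ with $U^{-1}\eta \in H^2(\R)$. The only point requiring attention is that the sign convention in \eqref{fourier} agrees with the convention used to define $H^2(\R)$, after which the identification is immediate.

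For (3), the content is the convolution (``exchange'') theorem: the Fourier transform carries pointwise multiplication to convolution. For $f \in C_c^\infty(\R)$ and $\xi \in \mathcal{S}(\R)$, a direct application of Fubini's theorem yields $\widehat{f\xi} = \widehat f * \widehat\xi$, which is precisely $U(M(f)\xi) = \lambda(\widehat f)(U\xi)$ once the right-hand side is read off from the definition of the convolution representation $\lambda$. Both $M(f)$ and $\lambda(\widehat f)$ are bounded operators of norm at most $\|f\|_\infty$, and $f \mapsto \widehat f$ is by construction the C*-isomorphism $C_0(\R) \to C^*(\R)$ sending the pointwise product to the convolution product of $C^*(\R)$; hence the identity, once known on the dense subspace, extends by continuity in both $f$ and $\xi$ to all $f \in C_0(\R)$ and $\xi \in L^2(\R)$, giving $UM(f)U^{-1} = \lambda(\widehat f)$.

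The work here is almost entirely bookkeeping, as none of the three facts is difficult in itself; the only place where genuine care is needed is in pinning down normalizations, namely confirming that the Hilbert-space transform $U$, the algebra isomorphism $f \mapsto \widehat f$, and the defining convention for $H^2(\R)$ all use mutually compatible sign and scaling conventions, so that the density-and-continuity arguments above splice together without hidden constants.
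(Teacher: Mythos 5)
Your proof is correct, and it matches the paper's treatment in the only sense available: the paper states Lemma~\ref{fourierprop} with no proof at all, citing these as classical facts (Plancherel's theorem, the Paley--Wiener description of $H^2(\R)$ as the $\xi \in L^2(\R)$ with $\widehat\xi$ supported in $[0,\infty)$, and the convolution theorem), which is exactly the content you supply via the standard density-and-continuity arguments. Your one point of genuine care --- checking that the sign convention in \eqref{fourier} is compatible with the upper-half-plane definition of $H^2(\R)$ --- is indeed the only nontrivial bookkeeping, and you handle it correctly.
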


We now present the conjugacy between the Weiner-Hopf and Toeplitz extensions. 

\begin{thm}\label{mainext}
Let $W : L^2(S^1) \to L^2(\R)$ and $U : L^2(\R) \to L^2(\R)$ be the Hilbert space isomorphisms associated to the Cayley and Fourier transforms
Then, $\mathrm{Ad}_U \circ \mathrm{Ad}_W$ carries $\overline \T_0$, the Toeplitz algebra in its uncompressed, nonunital form
onto the Weiner-Hopf algebra $\T_\R$. Furthermore, the map $\sigma_\R$ defined by commutativity of the following diagram 
\begin{align*}
\begin{tikzcd}[ampersand replacement=\&] 
0 \ar[r] \& \K(L^2(S^1)) \ar[r] \ar[d,"{\mathrm{Ad}_U \circ \mathrm{Ad}_W}"]\& \overline \T_0 \ar[d,"{\mathrm{Ad}_U \circ \mathrm{Ad}_W}"] \ar[r,"{\overline \sigma}"] \& C_0(S^1) \ar[r] \ar[d,"{f \mapsto \widehat{f \circ w}}"]\& 0 \\
0 \ar[r] \& \K(L^2(\R)) \ar[r] \& \T_\R  \ar[r,"{\sigma_\R}"] \& C^*(\R) \ar[r] \& 0
\end{tikzcd}
\end{align*}
satisfies $\sigma_\R(M(f) \lambda(g)) = f(+\infty) g$ for all $f \in C_0(\Ri)$, $g \in C^*(\R)$. 
\end{thm}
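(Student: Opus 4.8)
The plan is to set $V \colonequals UW$, so that $\mathrm{Ad}_U \circ \mathrm{Ad}_W = \mathrm{Ad}_V$, and to extract from Lemmas~\ref{cayleyprop} and~\ref{fourierprop} the three properties of $V$ that drive everything: $V$ is an isometric isomorphism $L^2(S^1) \to L^2(\R)$; it carries $H^2(S^1)$ onto $L^2([0,\infty))$, so that $VQV^{-1} = E$ where $Q \colonequals P_{H^2(S^1)}$ and $E \colonequals M(\chi_{[0,\infty)})$ is the projection onto $L^2([0,\infty))$; and $V M(\phi) V^{-1} = \lambda(\widehat{\phi \circ w})$ for every $\phi \in C_0(S^1)$, by composing property~(3) of the two lemmas. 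Note that $\phi \mapsto \widehat{\phi \circ w}$ is precisely the rightmost vertical isomorphism $C_0(S^1) \to C^*(\R)$, being the composite of $\phi \mapsto \phi \circ w$ (an isomorphism $C_0(S^1) \to C_0(\R)$) with the Fourier isomorphism.

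The core of the proof is the identity $\mathrm{Ad}_V(\overline{\T_0}) = \T_\R$. I would establish it by showing that both algebras contain $\K(L^2(\R))$ and have the same image in the Calkin algebra $\B(L^2(\R))/\K(L^2(\R))$; since a C*-subalgebra containing $\K$ is the full preimage of its Calkin image, equality of the images forces equality of the algebras. That $\mathrm{Ad}_V(\overline{\T_0}) \supseteq \K$ is immediate from $\overline{\T_0} \supseteq \K(L^2(S^1))$ and unitarity of $V$; that $\T_\R \supseteq \K$ is the Stone--von Neumann statement recorded in Section~\ref{Preliminaries}, that $\pi$ maps $C_0(\R) \rtimes_\tau \R$ onto $\K$. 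Recalling that $\overline{\T_0}$ is generated by the uncompressed Toeplitz operators $QM(\phi)Q$ ($\phi \in C_0(S^1)$) together with $\K(L^2(S^1))$, the properties above give $\mathrm{Ad}_V(QM(\phi)Q) = E\lambda(g)E$ with $g = \widehat{\phi \circ w}$, so the Calkin image of $\mathrm{Ad}_V(\overline{\T_0})$ is generated by the classes $[E\lambda(g)E]$, $g \in C^*(\R)$, while that of $\T_\R$ is generated by the classes $[M(f)\lambda(g)]$, $f \in C_0(\Ri)$.

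Matching these images rests on two compactness facts, and this is where the real work lies. The first is that $[E, \lambda(g)] \in \K$ for all $g \in C^*(\R)$: writing $g = \widehat{\phi \circ w}$, this commutator is $\mathrm{Ad}_V([Q, M(\phi)])$, which is compact because $[Q, M(\phi)]$ is (the multiplication representation commutes with $Q$ modulo compacts, extended from $C^1(S^1)$ to $C_0(S^1)$ by continuity); hence $E\lambda(g)E \equiv E\lambda(g) \pmod{\K}$. The second, and the only genuinely analytic input, is that $M(a)\lambda(g) \in \K$ whenever $g \in C^*(\R)$ and $a$ is bounded and vanishes at $\pm\infty$, even allowing a jump discontinuity: for $a \in C_0(\R)$ this is again Stone--von Neumann, while for a compactly supported bounded $a$ one checks that the integral kernel $(x,y) \mapsto a(x)\,g(x - y)$ is square-integrable for $g \in C_c(\R)$, so $M(a)\lambda(g)$ is Hilbert--Schmidt there and compact in general by norm density of $C_c(\R)$ in $C^*(\R)$. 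Now fix any $\rho \in C_0(\Ri)$ with $\rho(+\infty) = 1$; as $\chi_{[0,\infty)} - \rho$ is compactly supported and bounded, the second fact gives $E\lambda(g) \equiv M(\rho)\lambda(g) \pmod \K$, so $[E\lambda(g)]$ lies in the Calkin image of $\T_\R$. Conversely, writing $f = f(+\infty)\rho + f_0$ with $f_0 \in C_0(\R)$, the second fact gives $M(f)\lambda(g) \equiv f(+\infty)M(\rho)\lambda(g) \equiv E\lambda(f(+\infty)g) \pmod \K$, so $[M(f)\lambda(g)]$ lies in the Calkin image of $\mathrm{Ad}_V(\overline{\T_0})$. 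The two images coincide, and $\mathrm{Ad}_V(\overline{\T_0}) = \T_\R$. The conceptual point is that the discontinuous projection $E$ cannot be replaced by any continuous multiplication modulo compacts on its own, but becomes interchangeable with the continuous $M(\rho)$ once multiplied by a convolution operator.

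With the isomorphism in hand, $\sigma_\R \colonequals (\phi \mapsto \widehat{\phi \circ w}) \circ \overline\sigma \circ \mathrm{Ad}_V^{-1}$ is well defined on all of $\T_\R$, and the bottom row is exact with $\ker \sigma_\R = \mathrm{Ad}_V(\ker \overline\sigma) = \K(L^2(\R))$. To verify the formula I would first evaluate on the Toeplitz generators: since $\overline\sigma(QM(\phi)Q) = \phi$, we get $\sigma_\R(E\lambda(g)E) = \widehat{\phi \circ w} = g$ for $g = \widehat{\phi \circ w}$, and as $\phi$ ranges over $C_0(S^1)$ this covers all $g \in C^*(\R)$. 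Combining with $E\lambda(g)E \equiv E\lambda(g)$ gives $\sigma_\R(E\lambda(g)) = g$, and the congruence $M(f)\lambda(g) \equiv E\lambda(f(+\infty)g) \pmod \K$ from the previous paragraph then yields $\sigma_\R(M(f)\lambda(g)) = f(+\infty) g$, as required. I expect the main obstacle to be the second compactness fact together with the bookkeeping needed to reconcile the discontinuity of $E$ with the continuous functions on $\Ri$; the remainder is formal manipulation of the two exact sequences.
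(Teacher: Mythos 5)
Your proof is correct, and its skeleton is the paper's: both arguments conjugate the Toeplitz generators through $V = UW$ using Lemmas~\ref{cayleyprop} and \ref{fourierprop}, reduce equality of the two algebras to matching $M(\chi)\lambda(g)M(\chi)$ against $M(f)\lambda(g)$ modulo compacts (legitimate because both algebras contain $\K$ --- your Calkin-image framing is just a tidier packaging of the paper's ``since $\T_\R$ contains the compacts''), and both use the same two ingredients: the Stone--von Neumann inclusion $M(C_0(\R))\lambda(C^*(\R)) \subset \K$, and compactness of $[P_{H^2(S^1)}, M(\phi)]$ transported to the line to give $[M(\chi),\lambda(g)] \in \K$. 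Where you genuinely diverge is the handling of the jump of $\chi$: the paper avoids any direct analysis of discontinuous multipliers by normalizing $f \equiv 1$ on $[0,\infty)$, factoring $f - \chi = (1-\chi)h$ with $h \in C_0(\R)$, and invoking the ideal property of $\K$ (so $M(1-\chi)M(h)\lambda(g)$ is compact with no further estimate); you instead prove the stronger fact that $M(a)\lambda(g) \in \K$ for bounded $a$ vanishing at $\pm\infty$ with a jump, via the Hilbert--Schmidt computation $\int\!\!\int |a(x)|^2|g(x-y)|^2\,dy\,dx = \|a\|_2^2\|g\|_2^2$ for compactly supported $a$ and $g \in C_c(\R)$, plus density. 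Your route is self-contained but pays for an estimate the paper gets for free from the ideal structure. One small slip: for an arbitrary $\rho \in C_0(\Ri)$ with $\rho(+\infty)=1$, the difference $\chi_{[0,\infty)} - \rho$ is \emph{not} compactly supported, only vanishing at $\pm\infty$ with a single jump; this is harmless, since your compactness fact as stated covers exactly that case (decompose as a $C_0$ function plus a bounded compactly supported piece), or one may simply choose $\rho$ equal to $\chi$ outside $[-1,1]$. Finally, your explicit derivation of $\sigma_\R(M(f)\lambda(g)) = f(+\infty)g$ from $\overline\sigma(QM(\phi)Q) = \phi$ and the congruence $M(f)\lambda(g) \equiv M(\chi)\lambda(f(+\infty)g) \pmod{\K}$ spells out what the paper compresses into the single sentence ``the assertion about $\sigma_\R$ also follows from the above claim.''
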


\begin{proof}
By definition, $\T_\R$ is generated by products $M(f)\lambda(g)$ where $f \in C_0(\Ri)$ and $g \in C^*(\R)$. By definition, $\overline \T_0$ is generated by products $P_{H^2(S^1)} M_f P_{H^2(S^1)}$, $f \in C_0(S^1)$, together with $\K(L^2(S^1))$. Let  $\chi$ denote the characteristic function of the positive half-line $[0,\infty)$, so that $M(\chi)$ is the orthogonal projection onto $L^2([0,\infty)) \subset L^2(\R)$. The image of $\overline \T_0$ by $\mathrm{Ad}_U \circ \mathrm{Ad}_W$ is then generated by products $M(\chi)\lambda(g)M(\chi)$, $g \in C^*(\R)$ together with $\K(L^2(\R))$ (see Lemmas~\ref{cayleyprop} and \ref{fourierprop}). Since $\T_\R$ contains the compacts, the claim that $\mathrm{Ad}_U \circ \mathrm{Ad}_W$ maps $\overline \T_0$ onto $\T_\R$ will be proven once we establish the following:
\begin{claim}
If $f \in C_0(\Ri)$, $g \in C^*(\R)$ and $f(+\infty)=1$, then $M(f)\lambda(g)$ is equal to $M(\chi) \lambda(g) M(\chi)$, modulo compact operators.
\end{claim}

Firstly, since $M(C_0(\R)) \lambda(C^*(\R)) \subset \K(L^2(\R))$, it does no harm to assume that $f$ is identically equal to $1$ on $[0,\infty)$. We can then write $f-\chi = (1-\chi)h$ where $h \in C_0(\R)$ and therefore, using that $M(h)\lambda(g)$ is compact and that compact operators form an ideal, conclude that $M(f)\lambda(g)$ is equal to $M(\chi) \lambda(g)$ modulo compact operators.  Secondly, recall that the Hardy projection $P_{H^2(S^1)}$ commutes, modulo compact operators, with the multiplication representation of $C(S^1)$ on $L^2(S^1)$. Conjugating through $U \circ W : L^2(S^1) \to L^2(\R)$, this says that the projection $M(\chi)$ commutes modulo compact operators with the regular representation $\lambda$ of $C^*(\R)$ on $L^2(\R)$. Therefore, $M(\chi) \lambda(g) =M(\chi) \lambda(g) M(\chi)$, modulo compact operators, establishing the claim.

The assertion about $\sigma_\R$ also follows from the above claim.

\end{proof}

Since the uncompressed form of the nonunital Toeplitz extension is isomorphic to the usual nonunital Toeplitz extension, we therefore have

\begin{cor}\label{mainiso}
The Weiner-Hopf algebra $\T_\R$ is isomorphic to the nonunital Toeplitz algebra $\T_0$.
\end{cor}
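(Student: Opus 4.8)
The plan is to obtain the asserted isomorphism by simply composing the two spatial identifications that have already been constructed, since all of the substantive analytic work lies in Theorem~\ref{mainext} and Lemma~\ref{topiso}. First I would invoke Lemma~\ref{topiso}, which supplies an isometric isomorphism $V : H^2(S^1) \to L^2(S^1)$ with the property that $\mathrm{Ad}_V$ carries the compressed nonunital Toeplitz algebra $\T_0$ onto its uncompressed form $\overline \T_0$. Next I would invoke Theorem~\ref{mainext}, which provides the isometric isomorphism $U \circ W : L^2(S^1) \to L^2(\R)$ such that $\mathrm{Ad}_U \circ \mathrm{Ad}_W$ carries $\overline \T_0$ onto the Weiner-Hopf algebra $\T_\R$.

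Composing, the operator $UWV : H^2(S^1) \to L^2(\R)$ is again an isometric isomorphism, and $\mathrm{Ad}_{UWV} = \mathrm{Ad}_U \circ \mathrm{Ad}_W \circ \mathrm{Ad}_V$ maps $\T_0 \subset \B(H^2(S^1))$ onto $\T_\R \subset \B(L^2(\R))$. This is precisely the spatial $*$-isomorphism $\T_0 \cong \T_\R$ asserted in Corollary~\ref{mainiso}. The only points requiring attention are bookkeeping: verifying that the three Hilbert-space maps compose in a well-defined way, with domains and codomains matching along the chain $H^2(S^1) \xrightarrow{V} L^2(S^1) \xrightarrow{W} L^2(\R) \xrightarrow{U} L^2(\R)$, and that the order of composition is chosen so that the source algebra is the compressed $\T_0$ and the target is $\T_\R$. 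There is no genuine analytic content left to supply, so I do not expect any real obstacle; the corollary is a formal consequence of the two preceding results.

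I would, however, point out that one in fact gets more than the bare algebra isomorphism. Both Lemma~\ref{topiso} and Theorem~\ref{mainext} are stated at the level of extensions, via commutative diagrams whose rows are the relevant short exact sequences and whose vertical arrows are the adjoints $\mathrm{Ad}_V$ and $\mathrm{Ad}_U \circ \mathrm{Ad}_W$ together with the induced identifications of the quotients. Stacking these two diagrams and composing the vertical arrows therefore identifies the nonunital Toeplitz extension with the Weiner-Hopf extension as extensions of $C^*$-algebras: the ideal $\K(H^2(S^1))$ is carried to $\K(L^2(\R))$, and the quotient identification $C_0(S^1) \to C^*(\R)$ is the Fourier transform of the Cayley pullback $f \mapsto \widehat{f \circ w}$ appearing in Theorem~\ref{mainext}, intertwining $\sigma$ with $\sigma_\R$. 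This stronger statement is exactly what justifies the informal remark preceding the corollary.
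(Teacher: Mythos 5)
Your proposal is correct and is exactly the paper's argument: the printed proof simply cites Theorem~\ref{mainext} and Lemma~\ref{topiso}, and your composition $\mathrm{Ad}_U \circ \mathrm{Ad}_W \circ \mathrm{Ad}_V$ is the unwinding of that citation, with the extension-level remark being a valid (if unstated in the corollary) bonus that matches the diagrams in those two results.
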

\begin{proof}
Follows from Theorem~\ref{mainext} and Lemma~\ref{topiso}.
\end{proof}

\begin{cor}\label{genX}
The K-theory boundary map $K_1(C^*(\R)) \to K_0(\K(L^2(\R))) \cong \Z$ induced by the Weiner-Hopf extension $0 \to \K(L^2(\R)) \to \T_\R \overset{\sigma_\R}{\to} C^*(\R) \to 0$ sends $[1-b] \mapsto -1$, where $b \in L^1(\R) \subset C^*(\R)$ is given by
\begin{align*}
b(t)  = \begin{cases} e^{-t/2} & t \geq 0 \\
0 & t<0. \end{cases}
\end{align*} 
\end{cor}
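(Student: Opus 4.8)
### Proof proposal for Corollary~\ref{genX}

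The plan is to transport the computation across the isomorphism of Theorem~\ref{mainext} to the Toeplitz extension, where boundary maps are completely understood, and to track where the specified element $1-b$ goes. By Theorem~\ref{mainext}, the Weiner-Hopf extension is isomorphic to the uncompressed nonunital Toeplitz extension, and the rightmost vertical isomorphism is the map $C_0(S^1) \to C^*(\R)$ given by $f \mapsto \widehat{f \circ w}$. Since naturality of the index map under morphisms of extensions means the boundary map for the Weiner-Hopf extension is conjugate to the boundary map for the Toeplitz extension via this identification on $K_1$, I would first identify the class in $K_1(C_0(S^1))$ corresponding to $[1-b] \in K_1(C^*(\R))$, then invoke the known fact that the Toeplitz boundary map sends the standard generator $[z] \in K_1(C(S^1)) \cong \Z$ (the winding-number generator) to the Fredholm index $-1$ of the shift $T_z$.

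The key computational step is therefore to show that $1-b$, pulled back through the Fourier isomorphism $C_0(\R) \to C^*(\R)$ and the Cayley substitution, is homotopic in the unitary group of the unitization to the coordinate function $z$ (or its appropriate power) on $S^1$. Concretely, I would compute the inverse Fourier transform of $b$: since $b(t) = e^{-t/2}\chi_{[0,\infty)}(t)$, its Fourier transform is the standard resolvent-type function $\widehat b(s) = (\tfrac12 + 2\pi i s)^{-1}$, so that $1 - \widehat b(s) = \frac{2\pi i s - 1/2}{2\pi i s + 1/2}$ as an element of $C^*(\R)$ realized via Fourier transform on $C_0(\R)^\sim$. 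I would then recognize this as (a reparametrized, orientation-sensitive form of) the Cayley transform itself: it is a Mobius map carrying $\R \cup \{\infty\}$ to the circle with a single winding, matching $f \circ w$ for $f = z$ up to the relevant normalization. Tracking the sign and the winding direction of this Mobius map is exactly what pins down the value $-1$ rather than $+1$.

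The main obstacle I anticipate is bookkeeping of signs and orientations: the index of the generator depends on the winding direction of $z$ around $S^1$, the orientation introduced by the Cayley transform $w(t) = \frac{t-i}{t+i}$, and the sign conventions in the Fourier transform \eqref{fourier}; a $\pm 1$ error anywhere would flip the answer. To control this I would anchor the calculation at a single definite fact — that the unilateral shift $T_z = S$ on $H^2(S^1)$ has Fredholm index $-1$, which is the source of the sign in the lemma's classification of essentially normal operators (Lemma~\ref{topiso}) — and propagate all identifications relative to it rather than recomputing an index from scratch. An alternative, and perhaps cleaner, route avoiding Cayley bookkeeping is to verify directly that $M(\chi)\lambda(1-b)M(\chi)$ is a Fredholm operator on $L^2([0,\infty))$ whose index is $-1$: the operator $\lambda(1-b)$ acts by $\xi \mapsto \xi - b*\xi$, and its compression to the half-line is a Wiener-Hopf operator with symbol $1 - \widehat b$, whose index equals minus the winding number of $1-\widehat b$ around the origin. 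Computing that winding number directly from the explicit formula for $1-\widehat b$ gives $-1$, which is the quantity the boundary map returns. I would present whichever of these two computations produces the cleanest orientation argument, most likely the Wiener-Hopf index computation, since it sidesteps the chain of conjugations entirely.
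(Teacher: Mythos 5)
Your primary route is exactly the paper's own proof --- transport across Theorem~\ref{mainext} by naturality of the index map and quote the Toeplitz computation $\partial[z]=\operatorname{ind}(T_z)=-1$ --- so the strategy is sound. But both of your concrete executions carry the same orientation error, and since the entire content of the corollary \emph{is} the sign, this is a genuine gap rather than mere bookkeeping anxiety. The rightmost vertical map in Theorem~\ref{mainext} is $f \mapsto \widehat{f \circ w}$, so naturality requires the \emph{preimage} of $1-b$ under the Fourier isomorphism, i.e.\ the inverse transform: the relevant loop is $1-\check b$ where $\check b(t) = \widehat b(-t) = (\tfrac12 - 2\pi i t)^{-1}$, which gives $1-\check b(t) = \frac{2\pi i t + 1/2}{2\pi i t - 1/2} = \frac{4\pi t - i}{4\pi t + i} = w(4\pi t)$, an orientation-preserving reparametrization of the Cayley transform, hence homotopic to $z \circ w$; its class is $[z]$ and $\partial[z]=-1$. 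The function you use instead, $1-\widehat b(s) = \frac{2\pi i s - 1/2}{2\pi i s + 1/2} = \frac{u+i}{u-i}$ with $u = 4\pi s$, is the complex conjugate of this loop: it has winding number $-1$ and matches $\bar z \circ w$, not $z \circ w$. So your claim that $1-\widehat b$ matches $f\circ w$ for $f=z$ ``up to normalization'' is false as stated, and following it through yields $\partial[1-b]=+1$.

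The same slip infects your preferred Wiener--Hopf route. With the paper's convention \eqref{fourier}, Lemma~\ref{fourierprop} gives $U^{-1}\lambda(1-b)U = M(1-\check b)$ and $U^{-1}M(\chi)U = P_{H^2(\R)}$, so $M(\chi)\lambda(1-b)M(\chi)$ is unitarily equivalent to the Toeplitz operator on $H^2(\R)$ with symbol $1-\check b$; the rule ``index equals minus the winding number'' must therefore be applied to $1-\check b$ (winding $+1$), giving $-1$, whereas applying it to $1-\widehat b$ (winding $-1$), as you propose, outputs $+1$. (Classical Wiener--Hopf sources state the formula with the symbol $1-\widehat b$ because they use the opposite sign $e^{+i\lambda t}$ in the transform --- precisely the convention clash you feared.) If you want a convention-proof anchor, avoid winding numbers entirely and compute the index by hand: for $\xi \in L^2([0,\infty))$ the equation $\xi(t) = \int_0^t e^{-(t-s)/2}\xi(s)\,ds$ forces $\xi = 0$ (set $F(t)=\int_0^t e^{s/2}\xi(s)\,ds$ and note $F'=F$, $F(0)=0$), while the adjoint equation $\eta(t) = e^{t/2}\int_t^\infty e^{-s/2}\eta(s)\,ds$ has one-dimensional solution space spanned by $e^{-t/2}$; hence the index is $0-1=-1$. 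That direct verification would repair either of your routes; the paper itself simply invokes naturality plus the observation that $1-b$ is the image of a winding-number-one loop, i.e.\ your first route carried out with the correct inverse-transform identification.
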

\begin{proof}
This follows from the corresponding calculation for the Toeplitz extension, and naturality of the K-theory boundary map. Note that $[1-b]$ is chosen to be the image under the Fourier isomorphism $C_0(\R) \to C^*(\R)$ of the usual generator of $K_1(C_0(\R))$ given as the class of (any) loop of winding number $1$. 
\end{proof}

 \begin{rmk}
Obviously one can combine the Fourier transform of the line, the Cayley transform and the Fourier transform of the circle to realize $C^*(\R)$ as a codimension-1 ideal in $C^*(\Z)$. One may understand this section as explaining how, by appropriately unitarizing the Cayley transform, one can promote this observation to   the level of extensions.
\begin{align*}
\begin{tikzcd}[ampersand replacement=\&] 
0 \ar[r] \& \K(L^2(\R)) \ar[r]  \ar[d] \& C_0(\R\cup\{+\infty\}) \rtimes \R  \ar[r] \ar[d] \& C^*(\R)  \ar[r] \ar[d]\& 0 \\
0 \ar[r] \& \K(L^2(\Z)) \ar[r]  \& C_0(\Z\cup\{+\infty\}) \rtimes \Z \ar[r] \& C^*(\Z) \ar[r]  \& 0 
\end{tikzcd}
\end{align*}
\end{rmk}

\section{The C*-algebras of the foliations $\F^k_\R$}\label{Cstarclass}

In this section, we classify the C*-algebras of the foliations $\F^k_\R$ up to isomorphism. It is quite straightforward to see that all the  $C^*(\F^k_\R)$ with $k$ even are  isomorphic, and that all the $C^*(\F^k_\R)$ with $k$ odd are  isomorphic. The main task is therefore to distinguish these two cases by some invariant. For this purpose, we use that $C^*(\F^k_\R)$ contains a unique essential ideal $I$ isomorphic to $\K \oplus \K$. The ideal $I$ determines a map $K_1(C^*(\F^k_\R)/I) \to \Z$ given as the composition of  the $K$-theory boundary map $K_1(C^*(\F^k_\R)/I) \to K_0(I)$ with  the addition map $K_0(I) \cong \Z \oplus \Z \to \Z$. We show this map $K_1(C^*(\F^k_\R)/I) \to \Z$   is zero when $k$ is even and an isomorphism onto $2\Z$ when $k$ is odd.

As in the case of regular foliations,  the C*-algebra $C^*(\F)$ of a singular foliation $\F$ is a naturally represented on the $L^2$ space of any of its leaves  (or the holonomy cover of that leaf, as appropriate). See Section~4.5 in \cite{AS[2007]}. In the case of $\F^k_\R$, the leaves with no holonomy consist of the two half lines $\R_- \colonequals (-\infty,0)$ and $\R_+ \colonequals (0,\infty)$. We show that the natural representation of $C^*(\F^k_\R)$ on $L^2(\R_-) \oplus L^2(\R_+)$ is faithful and that the images of these representations as $k$ varies are distinct C*-algebras (even though there are only of two isomorphism types). This also shows that the reduced and maximal C*-algebras are the same for these foliations.

As discussed in Section~\ref{Preliminaries}, the holonomy groupoid of  $\F^k_\R$ is isomorphic to the transformation groupoid $\R \rtimes_\phi \R$ associated to the flow of any  complete vector field generating $\F^k_\R$. The $C*$-algebra of $\F^k_\R$ is then isomorphic to the crossed product $C_0(\R) \rtimes_\phi \R$ of the associated action on $\R$.  Throughout this section, $\phi$ will denote a smooth action of $\R$  on $\R$ which fixes the origin and is transitive on each of the half lines $\R_- \colonequals (-\infty,0)$ and $\R_+\colonequals (0,\infty)$. We also use $\phi$ to denote the corresponding action of $\R$ on  $C_0(\R)$ by $*$-automorphisms: 
\begin{align}\label{C0action}
 (\phi_tf)(x) =  f(\phi_{-t}(x)).
 \end{align}
 We want to understand the  structure of the crossed product C*-algebra $C_0(\R) \rtimes_\phi \R$. Working up to orientation-preserving topological conjugacy, there are four types of flows to consider.  We  assign each case  a ``bi-index'', as tabulated below. The first component indicates whether the origin is a source or a sink for the left half line and the second component indicates whether the origin is a source or a sink for the right half line. 
\begin{figure}[htbp]
\centering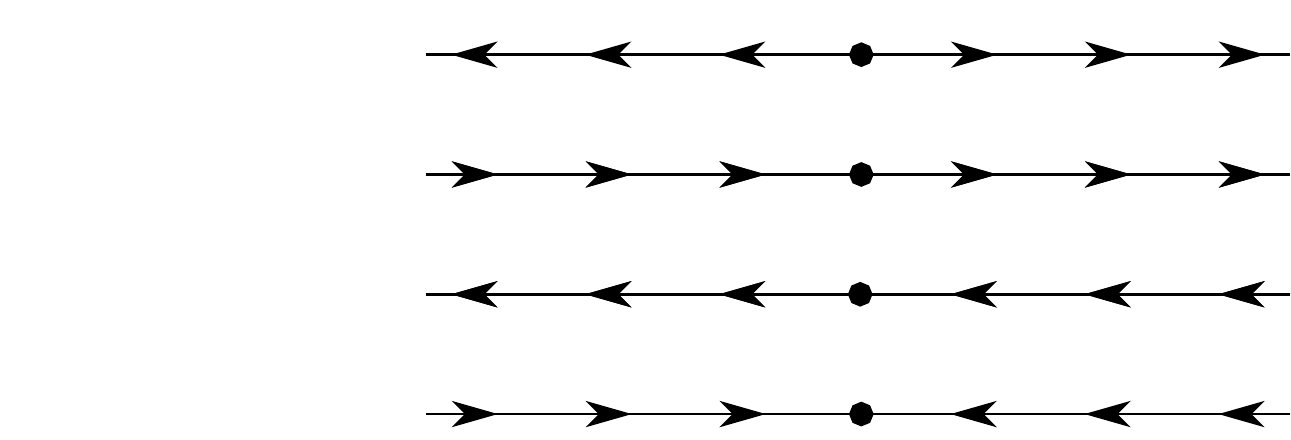
\caption{Indexing convention for flows on $\R$ with unique fixed point $0$.}\label{biindex}
\end{figure}

\begin{rmk}
If orientation-reversing topological conjugacies are  also allowed, we are reduced to three cases; the index $(-1,1)$ case becomes equivalent to the index $(1,-1)$ case. If time-reversal symmetry is allowed, we are reduced to two cases; the index $(-1,1)$ and $(1,-1)$ cases become equivalent and the index $(1,1)$ and $(-1,-1)$ cases become equivalent. Such equivalences do not change the C*-algebra, so it follows that there are at most two possibilities for the isomorphism type of $C_0(\R) \rtimes_\phi \R$. That two types do in fact occur will follow from an index calculation.  
\end{rmk}
Evaluation at $0$ yields an $\R$-equivariant exact sequence of commutative C*-algebras
\begin{align*}
\begin{tikzcd}[ampersand replacement=\&] 
0 \ar{r} \& C_0( \R_-) \oplus C_0(\R_+)  \ar{r} \& C_0(\R) \ar[r]  \&  \C \ar{r} \& 0,
\end{tikzcd} 
\end{align*}
where $\R_- \colonequals (-\infty,0)$, $\R_+ \colonequals (0,\infty)$.  The restriction of the action to either of the half lines is conjugate to  the translation action of $\R$ on itself. Thus, taking the crossed product by $\R$  yields that $C_0(\R) \rtimes_\phi \R$ sits in an exact sequence of the form
\begin{align}\label{inducer}
\begin{tikzcd}[ampersand replacement=\&] 
0 \ar[r] \&  \K \oplus \K \ar[r] \& C_0(\R) \rtimes_\phi \R \ar{r} \& C^*(\R) \ar{r} \& 0 .
\end{tikzcd} 
\end{align}
First we show that there are four possibilities for the isomorphism type of this natural extension if we slightly alter the usual definition of isomorphism of extensions by insisting that the order of the factors $\K \oplus \K$ is preserved.

Let $M$ denote the multiplication representation of $C_0(\R)$ on $L^2(\R_-) \oplus L^2(\R_+)$. Let $U$ denote the unitary representation of $\R$ on $L^2(\R_-) \oplus L^2(\R_+)$ given by $(U_t \xi)(x) = (\phi_{-t}'(x))^{1/2} \xi(\phi_t(x))$. Then $(M,U)$ is a covariant pair and determines a representation  $\pi$ of $C_0(\R) \rtimes_\phi \R$ on $L^2(\R_-) \oplus L^2(\R_+)$. Observe that the restriction of $\pi$ to $C_0(\R_- \cup \R_+) \rtimes_\phi \R$ is an isomorphism onto $\K(L^2(\R_-)) \oplus \K(L^2(\R_+))$.

\begin{thm}
Let $\phi$ be  a smooth action  of $\R$ on $\R$ with $0$ as its unique fixed point. Then:
\begin{enumerate}
\item The representation $\pi$ of $C_0(\R) \rtimes_\phi \R$ on $L^2(\R_-) \oplus L^2(\R_+)$ defined above is faithful. 
\item The  K-theory boundary map $K_1(C^*(\R)) \to \Z \oplus \Z$ arising from the exact sequence \eqref{inducer} sends the generator $[1-b]$  from Corollary~\ref{genX}   to the index of the flow $\phi$, as tabulated in Figure~\ref{biindex}.
\end{enumerate}
\end{thm}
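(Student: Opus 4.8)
The plan is to treat the two assertions separately, reducing each to the analysis of the Weiner-Hopf extension carried out in Section~\ref{WHsection}.

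For the faithfulness claim~(1), the key observation is that $\pi$ is already known to restrict to an isomorphism of the ideal $J \colonequals C_0(\R_-\cup\R_+)\rtimes_\phi\R \cong \K\oplus\K$ onto $\K(L^2(\R_-))\oplus\K(L^2(\R_+)) \subset \K(H)$, where $H \colonequals L^2(\R_-)\oplus L^2(\R_+)$, as recorded just before the statement. In particular $\pi(J)$ consists of compact operators and $\pi|_J$ is injective, so $\pi$ descends to a $*$-homomorphism $\bar\pi$ from the quotient $C_0(\R)\rtimes_\phi\R / J \cong C^*(\R)$ into the Calkin algebra $\B(H)/\K(H)$. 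I would then invoke the elementary fact that a representation injective on an ideal whose induced map on the quotient into the Calkin algebra is injective must itself be injective: if $\pi(a)=0$ then $\bar\pi(a+J)=0$, hence $a\in J$, hence $a=0$. It therefore remains to see that $\bar\pi$ is injective, i.e. that the integrated operator $\int_\R g(t)\,U_t\,dt$ is compact only for $g=0$ in $C^*(\R)$. This follows because, after conjugating the flow on each half-line to the translation flow (which is unitarily implemented thanks to the $(\phi_{-t}'(x))^{1/2}$ factor in the definition of $U$), that operator becomes a convolution operator on $L^2(\R)$; a nonzero convolution operator is never compact, and since $\R$ is amenable the regular representation $\lambda$ is faithful on $C^*(\R)$, forcing $g=0$.

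For the index computation~(2), I would first note that both $M$ and $U$ preserve the decomposition $H = L^2(\R_-)\oplus L^2(\R_+)$ — multiplication is diagonal and the flow fixes $0$, hence preserves each half-line — so that $\pi = \pi_-\oplus\pi_+$ splits as a direct sum of representations $\pi_\pm$ on $L^2(\R_\pm)$. An argument of the same type as in~(1) shows $\ker\pi_\pm = \K(L^2(\R_\mp))$, so that $\pi_\pm$ realizes the quotient of $C_0(\R)\rtimes_\phi\R$ by one summand of $J$ as an extension
\[ 0 \to \K(L^2(\R_\pm)) \to \pi_\pm\big(C_0(\R)\rtimes_\phi\R\big) \to C^*(\R) \to 0. \]
Conjugating the flow on $\R_\pm$ to the translation flow identifies the closure $\R_\pm\cup\{0\}$ with a one-sided compactification $\R\cup\{\pm\infty\}$ of the line, the adjoined point being the fixed point $0$; this identifies the displayed extension with the Weiner-Hopf extension of Section~\ref{WHsection}, up to the orientation determined by whether $0$ is a source or a sink for the half-line in question.

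The computation of $\partial[1-b]$ then proceeds by naturality. Projection $J = \K\oplus\K \to \K(L^2(\R_\pm))$ onto a single summand fits into a morphism of extensions from~\eqref{inducer} to the half-line extension above, inducing the identity on the quotient $C^*(\R)$. Naturality of the K-theory boundary map therefore identifies the two components of $\partial[1-b]\in K_0(J)=\Z\oplus\Z$ with the boundary maps $\partial_\pm[1-b]$ of the two half-line extensions. By Corollary~\ref{genX}, the boundary map of the standard Weiner-Hopf extension sends $[1-b]$ to $-1$, and reversing the orientation (equivalently, interchanging source and sink) negates this value; carrying this through for each half-line yields exactly the bi-index tabulated in Figure~\ref{biindex}. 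I expect the main obstacle to be precisely this last piece of sign-bookkeeping: one must track whether the conjugacy $\R_\pm \cong \R$ is orientation-preserving or reversing, confirm that it is unitarily implemented compatibly with $U$, and verify that the resulting signs match the source/sink convention of Figure~\ref{biindex} rather than its negative.
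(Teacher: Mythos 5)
Your treatment of part (2) is essentially the paper's argument in different packaging: the paper assembles the two half-line extensions into a pullback algebra $\T_\R \oplus_\varepsilon \T_\R$ (with $\sigma_\R^{+1} = r\circ\sigma_\R$ encoding the time reversal) and reads both claims off a single commutative diagram, whereas you project onto each summand of $\K\oplus\K$ and apply naturality of the boundary map twice. Both reduce to Corollary~\ref{genX} plus exactly the sign bookkeeping you flag, and that half of your proposal is sound.

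Part (1), however, contains a genuine gap. Your reduction of injectivity of $\bar\pi \colon \bigl(C_0(\R)\rtimes_\phi\R\bigr)/J \to \B(H)/\K(H)$ to the statement that $\int_\R g(t)\,U_t\,dt$ is compact only for $g=0$ rests on a false identification. The class $\bar\pi(g)$ must be computed from a lift of $g$ lying in $A \colonequals C_0(\R)\rtimes_\phi\R$ itself, say $a = f\times g$ with $f\in C_0(\R)$, $f(0)=1$; the operator $\int_\R g(t)\,U_t\,dt$ corresponds to $1\times g$, which is only a multiplier of $A$ since $1\notin C_0(\R)$. And $\pi(a)$ is \emph{not} congruent to $\int_\R g(t)\,U_t\,dt$ modulo $\K(H)$: conjugating the flow on a half-line to the translation flow, the difference becomes $M(h)\lambda(g)$ where $h$ tends to $0$ at the end of the line corresponding to the fixed point but to $-1$ at the other end; translating a suitable bump function toward that other end (using that $\lambda(g)$ commutes with translations) produces an orthonormal sequence whose image under $M(h)\lambda(g)$ does not tend to zero, so this difference is non-compact whenever $g\neq 0$. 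Thus your elementary fact — nonzero convolution operators are never compact — establishes non-vanishing of the Calkin class of the \emph{wrong} operator.

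What injectivity of $\bar\pi$ actually requires is that the truncated operator $M(\chi)\lambda(g)M(\chi)$ — equivalently, modulo compacts, $M(\tilde f)\lambda(g)$ with $\tilde f\in C_0(\Ri)$ and $\tilde f(+\infty)=1$ — is compact only when $g=0$. That is precisely the statement $\ker\sigma_\R = \K(L^2(\R))$ for the Weiner-Hopf extension, i.e.\ the content of Section~\ref{WHsection} (Theorem~\ref{mainext}), and it is not a consequence of the non-compactness of full convolution operators. Since you already invoke the Weiner-Hopf identification for part (2), the repair is immediate — identify $\pi_\pm(A)$ with $\T_\R$ carrying the appropriate symbol and quote exactness there, which is in effect what the paper's diagram with $\T_\R\oplus_\varepsilon\T_\R$ does, yielding (1) and (2) simultaneously — but as written your argument for (1) does not close.
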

\begin{proof}
We define $\T_\R^{-1}$  to be $\T_\R$ equipped with its usual symbol map $\sigma_\R^{-1} \colonequals \sigma_\R$. We define $\T_\R^{+1}$ to be $\T_\R$ equipped with the time-reversed symbol map $\sigma_\R^{+1}\colonequals r \circ \sigma_\R$, where $r : C^*(\R) \to C^*(\R)$ is determined on $L^1(\R)$ by $(rf)(t) = f(-t)$. Given $\varepsilon = (\varepsilon_1,\varepsilon_2) \in \{\pm 1\}^2$, we define $\T_\R \oplus_\varepsilon \T_\R$ to  be the C*-algebra on $L^2(\R) \oplus L^2(\R)$ given as  the pullback algebra
\[ \{ (T_1,T_2) \in \T_\R \oplus \T_\R : \sigma^{\varepsilon_1}_\R(T_1) = \sigma^{\varepsilon_2}_\R(T_2) \} \]
equipped with the map $\sigma^\varepsilon_\R  :\T_\R \oplus_\varepsilon \T_\R  \to C^*(\R)$ defined by
\[ \sigma^\varepsilon_\R (T_1,T_2) \colonequals \sigma^{\varepsilon_1}_\R(T_1) = \sigma^{\varepsilon_2}_\R(T_2). \]
By definition,  $\T_\R \oplus_\varepsilon \T_\R$ sits in an exact sequence
\begin{align*}
\begin{tikzcd}[ampersand replacement=\&]
0 \ar[r] \& \K(L^2(\R)) \oplus \K(L^2(\R))  \ar[r]   \&\T_\R \oplus_\varepsilon \T_\R \ar[r,"{\sigma^\varepsilon_\R}"]  \& C^*(\R) \ar[r]  \& 0.
\end{tikzcd}
\end{align*}
One may check, using Corollary~\ref{genX}, that the K-theory boundary map 
\[ K_1(C^*(\R)) \to K_0(\K(L^2(\R)) \oplus \K(L^2(\R))) \cong \Z \oplus \Z \]
determined by this extension sends $[1-b] \mapsto \varepsilon$. 

Identity each of $\R_- \cup \{0\}$ and $\{0\} \cup \R_+$ with $\Ri$ in such a way that the  flow $\phi$ is translated to either the usual translation flow (in the case of a sink) or the time-reversed translation flow (in the case of a source). The identifications chosen lead to corresponding isometric isomorphisms $V_- : L^2(\R_-) \oplus  L^2(\R)$ and $V_+ : L^2(\R_+) \to L^2(\R)$. These choices are such that $V \colonequals V_- \oplus V_+$ carries the image of $\pi$ onto $\T_\R \oplus_\varepsilon \T_\R$. One may check that the following diagram is commutative:
\begin{align*}
\begin{tikzcd}[ampersand replacement=\&]
0 \ar[r] \&C_0(\R_- \cup \R_+) \rtimes_\phi \R  \ar[r] \ar[d,"{\mathrm{Ad}_V \circ \pi}"] \& C_0(\R) \rtimes_\phi \R \ar[r] \ar[d,"{\mathrm{Ad}_V \circ \pi}"] \& C^*(\R) \ar[r] \ar[d,equals] \& 0 \\
0 \ar[r] \& \K(L^2(\R))  \oplus \K(L^2(\R)) \ar[r] \& \T_\R \oplus_\varepsilon \T_\R    \ar[r,"{\sigma^\varepsilon_\R}"] \& C^*(\R) \ar[r] \& 0.
\end{tikzcd}
\end{align*}
The vertical maps on the left and right are isomorphisms, whence so is the map in the center, proving (1). This diagram also establishes (2), by the above calculation and naturality of the K-theory boundary map.
\end{proof}

\begin{cor}
Let $\phi$ and $\psi$ be smooth actions of $\R$ on $\R$ with $0$ as their unique fixed point.  Then the following are equivalent: 
\begin{enumerate}
\item There exists an isomorphism $C_0(\R) \rtimes_\phi \R \to C_0(\R) \rtimes_\psi \R$ and an isomorphism $\K \oplus \K \to \K \oplus \K$ preserving the order of the factors making the following diagram commutative
\[ \begin{tikzcd}
0 \ar[r] &\K \oplus \K \ar[r] \ar[d] & C_0(\R) \rtimes_\phi \R \ar[r] \ar[d]& C^*(\R) \ar[r] \ar[d,equals] & 0 \\
0 \ar[r] & \K \oplus \K \ar[r] &C_0(\R) \rtimes_\psi \R   \ar[r] &C^*(\R) \ar[r] &0
\end{tikzcd}.\]
Here, the top and bottom rows are the exact sequences coming from evaluation at $0$ as in \eqref{inducer}.
\item $\phi$ and $\psi$ have the same index, as tabulated in Figure~\ref{biindex}.
\end{enumerate}
\end{cor}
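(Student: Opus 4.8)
The plan is to reduce both implications to the preceding theorem, which identifies the extension $0 \to \K \oplus \K \to C_0(\R) \rtimes_\phi \R \to C^*(\R) \to 0$ coming from evaluation at $0$ with the model extension $0 \to \K(L^2(\R)) \oplus \K(L^2(\R)) \to \T_\R \oplus_\varepsilon \T_\R \to C^*(\R) \to 0$, where $\varepsilon$ is the index of $\phi$. Crucially, the identifying isomorphism $\mathrm{Ad}_V \circ \pi$ furnished by that theorem is the identity on the quotient $C^*(\R)$ and restricts to an order-preserving isomorphism on the ideal $\K \oplus \K$, since $V = V_- \oplus V_+$ respects the splitting indexed by the two half-lines.

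For the implication $(2) \Rightarrow (1)$, I would proceed formally. Assuming $\phi$ and $\psi$ share the common index $\varepsilon$, the theorem supplies extension isomorphisms from each of $C_0(\R) \rtimes_\phi \R$ and $C_0(\R) \rtimes_\psi \R$ onto the same model $\T_\R \oplus_\varepsilon \T_\R$, each acting as the identity on $C^*(\R)$ and order-preserving on $\K \oplus \K$. Composing the first with the inverse of the second produces precisely the commutative diagram required in $(1)$, together with the needed order-preserving isomorphism of the ideals. This direction is essentially bookkeeping once the theorem is available.

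For the implication $(1) \Rightarrow (2)$, I would invoke naturality of the K-theory boundary map. A commutative diagram of extensions as in $(1)$ yields a commutative square relating the two boundary maps $K_1(C^*(\R)) \to K_0(\K \oplus \K)$. The right vertical arrow is the identity on $C^*(\R)$, hence the identity on $K_1$. The left vertical arrow is an order-preserving isomorphism $\K \oplus \K \to \K \oplus \K$, and the point is that any such map induces the identity on $K_0(\K \oplus \K) \cong \Z \oplus \Z$: preserving the order of the factors forces it to split as $\alpha_1 \oplus \alpha_2$ with each $\alpha_i \colon \K \to \K$ a $\ast$-isomorphism, and every $\ast$-isomorphism of $\K$ sends a rank-one projection to a rank-one projection, hence acts trivially on $K_0(\K) \cong \Z$. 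It follows that the two boundary maps agree on the generator $[1-b]$ of $K_1(C^*(\R))$ (see Corollary~\ref{genX}), so by part~(2) of the preceding theorem $\phi$ and $\psi$ carry $[1-b]$ to the same element of $\Z \oplus \Z$; that is, they have the same index.

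The step I would take the most care with is the triviality of an order-preserving automorphism of $\K \oplus \K$ on $K_0$, since this is exactly where the order-preservation hypothesis earns its keep: an isomorphism interchanging the two summands would swap the $\Z$-factors and so would fail to distinguish the indices $(+1,-1)$ and $(-1,+1)$. Everything else is a routine application of the preceding theorem and the naturality of the boundary map, so I do not anticipate any further obstacle.
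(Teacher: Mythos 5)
Your proposal is correct and follows essentially the same route as the paper, whose proof is simply the remark ``Follows from the naturality of the K-theory long exact sequence'': your $(2)\Rightarrow(1)$ direction makes explicit the composition through the common model extension $\T_\R \oplus_\varepsilon \T_\R$ built in the preceding theorem's proof, and your $(1)\Rightarrow(2)$ direction is exactly the intended naturality argument, with the correct observation that an order-preserving isomorphism of $\K \oplus \K$ acts as the identity on $K_0 \cong \Z \oplus \Z$. No gaps; your write-up just supplies the details the paper leaves implicit.
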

\begin{proof}
Follows from the naturality of the K-theory long exact sequence.
\end{proof}

We know from faithfulness of the representation of $C_0(\R) \rtimes_\phi \R$ on $L^2(\R_-) \oplus L^2(\R_+)$ that the ideal $I \cong \K \oplus \K$ given as the kernel of the map $C_0(\R) \rtimes_\phi \R \to C^*(\R)$ is an essential ideal. It is also, as we now show, an \emph{intrinsic} ideal in the sense that it is preserved by any $*$-automorphism of $C_0(\R) \rtimes_\phi \R$.

\begin{lemma}\label{CstarIntrinsic}
Let $A$ be a C*-algebra. Then $A$ can contain at most one one closed, $*$-invariant ideal which is an essential ideal and is  isomorphic to a finite direct sum of simple C*-algebras. In particular, if $I$ is such an ideal in $A$, then $\theta(I)=I$ for every $*$-automorphism $\theta :A \to A$.
\end{lemma}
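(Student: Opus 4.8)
The plan is to establish the uniqueness assertion and then deduce the ``in particular'' clause formally. First I would observe that if $I$ has all the listed properties and $\theta : A \to A$ is a $*$-automorphism, then $\theta(I)$ does too: it is closed because $\theta$ is isometric, it is a $*$-invariant ideal because $\theta$ is a $*$-algebra automorphism, and it is isomorphic to the same finite direct sum of simple C*-algebras via $\theta|_I$. For essentiality, given any nonzero closed ideal $L$ of $A$, the ideal $\theta^{-1}(L)$ is nonzero and closed, so $I \cap \theta^{-1}(L) \neq 0$ by essentiality of $I$, and applying $\theta$ gives $\theta(I) \cap L \neq 0$. Hence $\theta(I)$ again satisfies the hypotheses, and uniqueness forces $\theta(I) = I$.

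For the uniqueness itself, suppose $I$ and $K$ are two ideals satisfying the hypotheses. I would fix an isomorphism exhibiting $I \cong I_1 \oplus \cdots \oplus I_n$ with each summand simple, and transport it back to write $I = I_1 \oplus \cdots \oplus I_n$ where each $I_i$ is a nonzero, simple, closed ideal of the C*-algebra $I$. The crucial---and, I expect, only delicate---step is to promote each $I_i$ from an ideal of $I$ to an ideal of $A$. This is precisely the standard fact that for C*-algebras a closed ideal of a closed ideal is a closed ideal of the ambient algebra: taking an approximate unit $(e_\lambda)$ for $I$, one has $j = \lim_\lambda e_\lambda j$ for $j \in I_i$; since $I$ is an ideal of $A$ one has $a e_\lambda \in I$ for all $a \in A$, and since $I_i$ is an ideal of $I$ one has $(a e_\lambda) j \in I_i$; passing to the limit (and arguing symmetrically on the right) gives $aj, ja \in I_i$. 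Thus each $I_i$ is a genuine nonzero closed ideal of $A$.

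With this in hand the conclusion is immediate. Essentiality of $K$ says that $K \cap L \neq 0$ for every nonzero closed ideal $L$ of $A$; taking $L = I_i$ gives $I_i \cap K \neq 0$. But $I_i \cap K$ is a closed ideal of $A$ contained in $I_i$, hence a closed ideal of the simple algebra $I_i$, so simplicity forces $I_i \cap K = I_i$, i.e. $I_i \subseteq K$. Running over all $i$ yields $I \subseteq K$, and interchanging the roles of $I$ and $K$ gives $K \subseteq I$, so $I = K$. The whole argument hinges on the ideal-of-an-ideal fact isolated above, which is exactly what licenses the application of essentiality to the individual simple summands; once that is in place, only the definitions of simplicity and of an essential ideal are needed.
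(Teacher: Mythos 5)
Your proof is correct and follows essentially the same route as the paper's: both arguments rest on the fact that a closed ideal of a closed ideal is an ideal of the ambient C*-algebra, which lets essentiality of one ideal force containment of each simple summand of the other, and then symmetry gives equality. The only difference is that you spell out the approximate-unit proof of the ideal-of-an-ideal fact and the verification that $\theta(I)$ inherits the hypotheses, both of which the paper simply invokes as standard.
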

\begin{proof}
Clearly an essential ideal must  contain any ideal which is simple as a C*-algebra. Recall that, in the C*-algebra context, ideals of ideals are again ideals in the ambient C*-algebra. Therefore an essential ideal must in fact  contain any ideal  that is isomorphic to a finite direct sum of simple C*-algebras. The conclusion follows.
\end{proof}

Let $B$ be a C*-algebra containing an ideal $I$ which is isomorphic to the direct sum of finitely-many copies of $\K$, the algebra of compact operators on a separable Hilbert space. Then we have an map 
\[ \Index  : K_1(B/I) \to \Z \]
given as the composition of the boundary map $K_1(B/I) \to K_0(I)$ associated to the short exact sequence $0 \to I \to B \to B/I \to 0$ and the canonical map $K_0(I) \to \Z$ sending the class of any minimal projection to $1$. If $I$ is also an essential ideal in $B$, then it is intrinsic by the above lemma. Therefore, any automorphism $\theta:B \to B$ descends to an automorphism of $\overline \theta :B / I \to B/I$. Naturality of the K-theory boundary map as well as invariance of the addition map $\Z^n \to \Z$ under permuting the factors, then gives that $\overline \theta_* : K_1(B/I) \to K_1(B/I)$ commutes with $\Index  : K_1(B/I) \to \Z$.

If $\phi$ is the flow of a complete vector field generating $\F^k_\R$ for $k$ odd, then $\phi$ has index $(1,1)$ or $(-1,-1)$. If $\phi$ is the flow of a complete vector field generating $\F^k_\R$ for $k$ even, then $\phi$ has index $(-1,1)$ or $(1,-1)$. Recalling that $C^*(\F^k_\R) \cong C_0(\R) \rtimes_\phi \R$, the above discussion finally gives us:

\begin{thm}
Let $k$ and $\ell$ be a positive integers. Then, $C^*(\F^k_\R)$ is isomorphic to $C^*(\F^\ell_\R)$ if and only if $k$ and $\ell$ have the same parity. 
\end{thm}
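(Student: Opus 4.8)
The plan is to deduce this final theorem from the machinery already assembled, essentially as a bookkeeping argument combining the index calculation with the intrinsic-ideal invariant. First I would handle the easy direction: if $k$ and $\ell$ have the same parity, then by the remark following Figure~\ref{biindex}, the flows generating $\F^k_\R$ and $\F^\ell_\R$ are equivalent up to orientation-reversing conjugacy or time-reversal symmetry, none of which alter the isomorphism type of the crossed product $C_0(\R) \rtimes_\phi \R \cong C^*(\F^k_\R)$. Hence same parity forces $C^*(\F^k_\R) \cong C^*(\F^\ell_\R)$.

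For the converse, suppose $C^*(\F^k_\R) \cong C^*(\F^\ell_\R)$ via a $*$-isomorphism $\theta$; I want to show $k$ and $\ell$ share a parity. The key is the map $\Index : K_1(B/I) \to \Z$ built above, where $B = C^*(\F^k_\R)$ and $I \cong \K \oplus \K$ is the essential ideal arising as the kernel of the map to $C^*(\R)$ in \eqref{inducer}. By Lemma~\ref{CstarIntrinsic}, $I$ is intrinsic, so $\theta$ carries the ideal of the first algebra isomorphically onto the corresponding ideal of the second and descends to an isomorphism $\overline\theta : C^*(\R) \to C^*(\R)$ of the quotients. The crucial compatibility, already established in the discussion preceding the statement, is that $\overline\theta_* : K_1(C^*(\R)) \to K_1(C^*(\R))$ commutes with $\Index$; this uses naturality of the boundary map together with the permutation-invariance of the addition map $\Z^2 \to \Z$, which is exactly what lets us ignore whether $\theta$ preserves or swaps the two $\K$-summands.

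The final step is to read off the value of $\Index$ in each case and observe it is a parity invariant. Applying part (2) of the faithfulness/index theorem together with Corollary~\ref{genX}, the generator $[1-b] \in K_1(C^*(\R))$ maps under the boundary map to the bi-index $\varepsilon \in \{\pm 1\}^2$ of the flow, so its image under $\Index$ is $\varepsilon_1 + \varepsilon_2$. When $k$ is odd the index is $(1,1)$ or $(-1,-1)$, giving $\Index([1-b]) = \pm 2$; when $k$ is even the index is $(-1,1)$ or $(1,-1)$, giving $\Index([1-b]) = 0$. Since $[1-b]$ generates $K_1(C^*(\R)) \cong \Z$, the image of $\Index$ is $2\Z$ in the odd case and $\{0\}$ in the even case. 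Because $\overline\theta_*$ is a $K_1$-automorphism commuting with $\Index$, the subgroup $\Index(K_1(C^*(\R)))$ is preserved; as $2\Z \neq \{0\}$, the existence of $\theta$ forces $k$ and $\ell$ to have the same parity.

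The main obstacle is purely conceptual rather than computational: one must be careful that the invariant $\Index$ is genuinely forced to be respected by an \emph{arbitrary} isomorphism $\theta$, not merely one honoring the extension structure. This is precisely where intrinsicness of $I$ (Lemma~\ref{CstarIntrinsic}) and the permutation-invariance of $K_0(I) \cong \Z^2 \to \Z$ do the heavy lifting, since a priori $\theta$ could permute the two compact summands or act nontrivially on $K_1$ of the quotient. Once those two points are in hand, the parity distinction drops out immediately, and no further calculation is needed beyond citing Corollary~\ref{genX}.
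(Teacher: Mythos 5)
Your proposal is correct and follows essentially the same route as the paper, whose proof of this theorem is precisely the preceding discussion: the remark after Figure~\ref{biindex} for the ``same parity implies isomorphic'' direction, and the combination of Lemma~\ref{CstarIntrinsic}, the $\Index$ map with its compatibility under induced automorphisms, and the boundary-map computation sending $[1-b]$ to the bi-index $\varepsilon$ for the converse. Your explicit attention to why an \emph{arbitrary} isomorphism must respect $\Index$ (intrinsicness of $I$ plus permutation-invariance of the addition map $\Z \oplus \Z \to \Z$) is exactly the point the paper's discussion is designed to secure.
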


Lastly, since all the C*-algebras $C^*(\F^k_\R)$ have natural faithful representations on   $L^2(\R_-) \oplus L^2(\R_+)$, it makes sense to compare the images of these representations. Here we obtain the following:

\begin{thm}
Let $\pi_k$ denote the canonical representation of $C^*(\F^k_\R)$ on $L^2(\R_-) \oplus L^2(\R_+)$ and let $A_k$ denote the image of $\pi_k$. If $k$ and $\ell$ are distinct positive integers, then $A_k \neq A_\ell$. Indeed, $A_k \cap A_\ell = \K(L^2(\R_-)) \oplus \K(L^2(\R_+))$. 
\end{thm}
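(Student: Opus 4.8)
The inclusion $\K(L^2(\R_-))\oplus\K(L^2(\R_+))\subseteq A_k\cap A_\ell$ is immediate, since each $\pi_k$ restricts to an isomorphism of the ideal $C_0(\R_-\cup\R_+)\rtimes_\phi\R$ onto $\K(L^2(\R_-))\oplus\K(L^2(\R_+))$, as recorded before \eqref{inducer}. For the reverse inclusion I would first note that every operator in $A_k$ is block diagonal for $L^2(\R_-)\oplus L^2(\R_+)$, because both the multiplication representation and the flow unitaries preserve each half-line; it therefore suffices to prove $A_k|_{\R_+}\cap A_\ell|_{\R_+}=\K(L^2(\R_+))$ together with the analogous statement on $\R_-$, where $A_k|_{\R_+}$ denotes the algebra of $(+)$-components. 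Using the identifications $V_\pm$ that rectify the flow to a translation (as in the proof that $\pi_k$ is faithful), $A_k|_{\R_+}$ is carried isometrically onto the Wiener-Hopf algebra $\T_\R$ with $\sigma_\R$ supported at the fixed-point end. Conjugating $A_k|_{\R_+}$ and $A_\ell|_{\R_+}$ by the chart belonging to $k$ then reduces the claim to $\T_\R\cap U_g\T_\R U_g^{-1}=\K(L^2(\R))$, where $U_g$ is the unitary implementing the transition diffeomorphism $g$ between the two flow-rectifying charts.

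The crux is a localization lemma that I would state and prove on its own: if $g$ is an increasing diffeomorphism of $\R$ with $g'\to 0$ or $g'\to+\infty$ at the symbol end, then $\T_\R\cap U_g\T_\R U_g^{-1}=\K$. Because $U_g$ preserves $\K$ and $\ker\sigma_\R=\K$, the symbol maps $\sigma_\R$ and $\sigma_\R\circ\mathrm{Ad}_{U_g^{-1}}$ share the kernel $\K$, so it suffices to show that any $T$ in the intersection has $a:=\sigma_\R(T)=0$ in $C^*(\R)\cong C_0(\widehat\R)$. I would test $T$ on unit wave packets $\psi_{s,\xi}$ concentrated at spatial position $s\to+\infty$ and frequency $\xi$: membership $T\in\T_\R$ gives $\langle\psi_{s,\xi},T\psi_{s,\xi}\rangle\to\widehat a(\xi)$, while writing $T=U_gT'U_g^{-1}$ with $T'\in\T_\R$ and using unitarity rewrites the same quantity as $\langle\zeta_s,T'\zeta_s\rangle$ with $\zeta_s=U_g^{-1}\psi_{s,\xi}$. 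The point is that, since $g'\to 0$ (replacing $g$ by $g^{-1}$ if needed), the packets $\zeta_s$ still approach the symbol end but their frequency content escapes to infinity or spreads out; as $T'$ agrees modulo compacts with a Fourier multiplier lying in $C_0(\widehat\R)$ there, and compacts annihilate the weakly-null sequence $\zeta_s$, one gets $\langle\zeta_s,T'\zeta_s\rangle\to 0$. Comparing the two limits forces $\widehat a(\xi)=0$ for every $\xi$, whence $a=0$ and $T\in\K$.

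It remains to verify the hypothesis of the lemma, and this is exactly where $k$ enters. Near the fixed point the complete vector field generating $\F^k_\R$ agrees with $x^k\frac{d}{dx}$ to leading order, so the rectifying chart satisfies $\tilde W_k(x)\sim\frac{1}{(k-1)x^{k-1}}$ for $k\geq 2$ and $\tilde W_1(x)\sim-\log x$; the bounded correction from the rescaling factor $(1+x^2)^{-(k-1)/2}$ does not affect this leading term. A short computation then shows that the transition map behaves at the symbol end like a nontrivial power $v^{(\ell-1)/(k-1)}$ when $k,\ell\geq 2$, and like an exponential or a logarithm when exactly one of $k,\ell$ equals $1$; in every case with $k\neq\ell$ this gives $g'\to 0$ or $g'\to+\infty$, whereas $k=\ell$ gives $g'\to 1$. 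The same analysis holds on $\R_-$, where the source/sink alternation governed by the parity of $k$ merely reverses time and leaves the rate argument untouched. I expect the localization lemma to be the main obstacle: rigorously controlling the escape of frequency of the transported packets $\zeta_s$, so that the $C_0(\widehat\R)$-valued symbol of $T'$ reads off $0$, calls for a careful estimate that separates the multiplication and convolution parts of the generators $M(f)\lambda(h)$ of $\T_\R$ and exploits $\widehat h\in C_0(\widehat\R)$.
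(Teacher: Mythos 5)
Your structural reduction is exactly the paper's: restrict to the two half-lines (the block-diagonality observation, plus the remark that the componentwise statement suffices because $\K\oplus\K$ lies in both algebras, is valid), rectify each flow to translation, and reduce everything to the assertion that the Wiener--Hopf algebra and its conjugate under the transition unitary intersect only in the compacts, with the derivative of the transition diffeomorphism degenerating at the symbol end precisely because $k\neq\ell$. The asymptotics $\tilde W_k(x)\sim \frac{1}{(k-1)x^{k-1}}$ (respectively $\log$ for $k=1$) and the resulting power/exponential behaviour of the transition map are also what the paper uses, stated there at a comparable level of detail. The genuine gap is the one you flag yourself: the localization lemma's decisive estimate, $\langle\zeta_s,T'\zeta_s\rangle\to 0$ for the transported packets, is asserted but not proved, and as framed it is delicate for a concrete reason: $g$ is not affine, so $\zeta_s=U_g^{-1}\psi_{s,\xi}$ is not a dilated, modulated copy of $\psi_{s,\xi}$, and ``its frequency content escapes or spreads'' has no meaning until you control $g'$ across the whole (growing!) support of the packet. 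Worse, your Claim~1 (that $\langle\psi_{s,\xi},T\psi_{s,\xi}\rangle\to\widehat a(\xi)$ exactly) forces the packet widths $w_s$ to tend to infinity, which competes directly against the compression you need in Claim~2; one must choose $w_s\to\infty$ slowly enough that $w_s\cdot\sup_{[s-w_s,\,s+w_s]}g'\to 0$, and without that diagonalization the $\xi=0$ packets under a merely stretched transport would concentrate at frequency zero and the symbol term would \emph{not} vanish. Your ``replace $g$ by $g^{-1}$'' normalization is therefore doing real work and needs to be made explicit, as does the mixed-parity case on $\R_-$, where after rectification one algebra carries its symbol at the opposite end and the lemma must be stated for that time-reversed variant.

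The paper closes this same lemma by a cheaper route that entirely avoids reading off the symbol at every frequency. Using that every element of $\T_\R$ equals $\lambda(\widehat f)P$ modulo compacts, where $P$ is the projection onto $L^2([0,\infty))$, it suffices to show $T_1-U^{-1}T_2U$ is noncompact when $T_i=\lambda(\widehat f_i)P$ and $f_1\neq 0$. One tests on the orthonormal sequence $\xi_n=\tau_{nb}(\xi)$ of \emph{fixed-width} translated bumps: translation invariance of $\lambda(\widehat f_1)$ gives $\|T_1\xi_n\|_2=a>0$ for all $n$, while $u'\to\infty$ makes the supports of $U\xi_n$ shrink, so $\|U\xi_n\|_1\to 0$ by Cauchy--Schwarz, hence the inverse Fourier transforms $\eta_n$ satisfy $\|\eta_n\|_\infty\to 0$, and a two-region estimate using only $f_2\in C_0(\R)$ gives $\|f_2\eta_n\|_2\to 0$; a compact operator cannot have these two behaviours on an orthonormal sequence. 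Note that this chain (shrinking support $\Rightarrow$ small $L^1$-norm $\Rightarrow$ uniformly small Fourier transform $\Rightarrow$ killed by a $C_0$ multiplier) is exactly the rigorous substitute for your ``frequency spreads out,'' and since fixed-width bumps suffice, the width/compression tension above never arises. If you want to salvage your two-sided symbol-reading scheme, the same $L^1$ trick fills its hole, but you would essentially be reproving the paper's proposition with extra bookkeeping. One last small point: the correction to the rectifying chart coming from the completeness rescaling is lower order but not bounded for larger $k$ (the integrand differs by $O(x^{2-k})$ near $0$); the leading asymptotics you rely on are still correct, so this is a slip of wording rather than of substance.
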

\begin{proof}
Recall that $C^*(\F^k_\R) \cong C_0(\R) \rtimes_\phi \R$  where $\phi$ is the flow of a complete vector field agreeing with $x^k \frac{d}{dx}$ near $x=0$. Similarly,  $C^*(\F^\ell_\R)  \cong C_0(\R) \rtimes_\psi \R$  where $\psi$ is the flow of a complete vector field agreeing with $x^\ell \frac{d}{dx}$ near $x=0$. 

Recall that, by choosing a diffeomorphism $u : \R_- \sqcup \R_+ \to \R \sqcup \R$ conjugating the flow $\phi$ to a pair of translation flows, we can conjugate $A_k$ to a  pushout of Weiner-Hopf algebras. We may do the same for $A_\ell$, using a diffeomorphism $v: \R_- \sqcup \R_+ \to \R \sqcup \R$. We may reformulate our problem to be about comparing these pushout algebras under  the unitary induced by the diffeomorphism $w = u \circ v^{-1}$. The key thing to notice is  that, if $k > \ell$, then this diffeomorphism must be such that $\lim_{x \to \infty} w'(x) = + \infty$ (on each copy of $\R$), due to the different asymptotics of $\phi$ and $\psi$ close to $0$. In this way, one can reduce the problem to the proposition below. 
\end{proof}

\begin{propn}
Let $u$ be an orientation-preserving diffeomorphism of $\R$. Let $U  : L^2(\R) \to L^2(\R)$ be the corresponding Hilbert space isomorphism defined by $(U \xi)(x)= (u'(x))^{1/2} \xi(u(x))$. If $\lim_{x \to \infty} u'(x) = \infty$, then the Weiner-Hopf algebra $\T_\R$ is not preserved by $U$. Indeed, $\T_\R \cap \mathrm{Ad}_U(\T_\R) = \K(L^2(\R))$.
\end{propn}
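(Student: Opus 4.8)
The plan is to prove the stronger assertion $\T_\R \cap \mathrm{Ad}_U(\T_\R) = \K(L^2(\R))$, from which non-preservation is immediate: since unitary conjugation fixes $\K(L^2(\R))$ setwise, both algebras contain the compacts, so the intersection always contains $\K(L^2(\R))$; were $\T_\R$ preserved by $U$ the intersection would be all of $\T_\R \neq \K(L^2(\R))$. The entire content is therefore to show that any $T \in \T_\R \cap \mathrm{Ad}_U(\T_\R)$ is compact. I would detect noncompactness through the symbol map $\sigma_\R$ and compute $\sigma_\R(T)$ in two different ways, one coming from membership in $\T_\R$ and one from membership in $\mathrm{Ad}_U(\T_\R)$, arriving at a contradiction governed by the hypothesis $\lim_{x\to\infty} u'(x) = \infty$.

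The main tool is a renormalization at $+\infty$ by the translations $\lambda_s$. Writing $\chi$ for the characteristic function of $[0,\infty)$ as in Theorem~\ref{mainext}, recall that modulo $\K(L^2(\R))$ the algebra $\T_\R$ is generated by the operators $M(\chi)\lambda(g)M(\chi)$, and that $g \mapsto [M(\chi)\lambda(g)M(\chi)]$ is a $*$-homomorphic section of $\sigma_\R$ (using that $M(\chi)$ commutes with $\lambda(g)$ modulo compacts); hence every $T \in \T_\R$ satisfies $T \equiv M(\chi)\lambda(\sigma_\R(T))M(\chi)$ modulo $\K(L^2(\R))$. Since $\lambda_s M(\chi)\lambda_{-s} = M(\tau_s\chi) \to I$ strongly as $s \to -\infty$, while conjugation by $\lambda_s$ sends compacts to $0$ strongly and fixes each $\lambda(g)$, one obtains the clean formula: for every $T \in \T_\R$, $\lambda_s T \lambda_{-s} \to \lambda(\sigma_\R(T))$ in the strong operator topology as $s \to -\infty$.

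Now take $T \in \T_\R \cap \mathrm{Ad}_U(\T_\R)$ and set $g' = \sigma_\R(U^{-1}TU)$. Applying the previous paragraph to $U^{-1}TU \in \T_\R$ and conjugating by $U$ gives $T \equiv M(\chi\circ u)\, U\lambda(g')U^{-1}\, M(\chi\circ u)$ modulo $\K(L^2(\R))$, using $U M(f) U^{-1} = M(f\circ u)$. Since $\chi\circ u - \chi$ is the indicator of a bounded interval and $U\lambda(g')U^{-1}$ has integral kernel $(u'(x)u'(y))^{1/2} g'(u(x)-u(y))$, a direct Hilbert--Schmidt estimate (after the substitution $w = u(y)$, using $\int u'(y)|g'(u(x)-u(y))|^2\,dy = \|g'\|_2^2$, and a density argument in $g'$) shows that $M(\chi\circ u)$ may be replaced by $M(\chi)$ on both sides modulo compacts; thus $T \equiv M(\chi)\, U\lambda(g')U^{-1}\, M(\chi)$. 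Feeding this representative into the same strong limit and again using $M(\tau_s\chi) \to I$, the problem reduces to computing $\lim_{s\to-\infty} \lambda_s\, U\lambda(g')U^{-1}\, \lambda_{-s}$ in the strong operator topology.

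This last limit is the crux, and the only place where the hypothesis is used. The conjugated kernel is $(u'(x-s)u'(y-s))^{1/2} g'(u(x-s)-u(y-s))$; as $s \to -\infty$ the factors $u'(\,\cdot - s) \to \infty$ force the kernel to concentrate on the diagonal, and the change of variables $w = u(y-s) - u(x-s)$ (whose weight ratio $(u'(x-s)/u'(y-s))^{1/2} \to 1$) shows, for a dense set of Schwartz $g'$ and then for all $g'$ via the uniform bound $\|U\lambda(g')U^{-1}\| = \|\lambda(g')\|$, that the limit is the scalar operator $\big(\int_\R g'\big) I = \epsilon(g') I$, where $\epsilon$ is the character of $C^*(\R)$ integrated from the trivial representation. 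Comparing the two evaluations of the same strong limit yields $\lambda(\sigma_\R(T)) = \epsilon(g') I$. Because $\lambda$ is faithful and $\lambda(\sigma_\R(T))$ is a Fourier multiplier by an element of $C_0(\R)$, being a scalar multiple of the identity forces $\sigma_\R(T) = 0$; thus $T \in \K(L^2(\R))$, completing the argument. I expect the delicate point to be justifying this concentration limit rigorously, i.e. controlling the change of variables and the interchange of limit and integration uniformly enough to identify the strong limit as $\epsilon(g') I$.
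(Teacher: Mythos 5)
Your reductions up to the last step are sound: the renormalization formula $\lambda_s T\lambda_{-s}\to\lambda(\sigma_\R(T))$ strongly as $s\to-\infty$ for $T\in\T_\R$ is correct (strong convergence on the generators $M(\chi)\lambda(g)M(\chi)$ plus compacts, then a uniform-boundedness argument), as are the identity $UM(f)U^{-1}=M(f\circ u)$ and the Hilbert--Schmidt replacement of $M(\chi\circ u)$ by $M(\chi)$. The gap is exactly at the step you flag as delicate, and it is not merely a technical loose end: the concentration limit $\lambda_s U\lambda(g')U^{-1}\lambda_{-s}\to\epsilon(g')I$ is \emph{false} in general under the sole hypothesis $\lim_{x\to\infty}u'(x)=\infty$. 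Your change of variables needs the weight ratio $u'(x-s)/u'(y-s)$ to tend to $1$, i.e.\ some slow-variation or regularity of $u'$ at $+\infty$, and nothing in the hypothesis provides this: take $u'(x)=x\bigl(2+\sin(x^3)\bigr)$ for large $x$, which is smooth, positive, and tends to $\infty$, yet admits $a_s,b_s\to\infty$ with $|a_s-b_s|\to 0$ and $u'(a_s)/u'(b_s)\to 3$. Concretely, for $\xi=\mathbf{1}_{[0,1]}$ the vector $\eta_s=U^{-1}\lambda_{-s}\xi$ is a long flat bump modulated by a \emph{positive} oscillation of fixed relative amplitude whose wavelength (in the $y$-variable) tends to $0$; convolution by a fixed $g'$ averages this oscillation out (Riemann--Lebesgue), so $\|\,g'*\eta_s-\epsilon(g')\eta_s\|_2$ stays bounded away from $0$, and the net $\lambda_sU\lambda(g')U^{-1}\lambda_{-s}\xi$ does not converge to $\epsilon(g')\xi$ --- indeed its weak cluster points need not be scalar operators at all. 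So the comparison $\lambda(\sigma_\R(T))=\epsilon(g')I$ is never reached, and the contradiction collapses for such $u$.

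The paper's proof sidesteps precisely this issue by never identifying a renormalized limit operator. It tests noncompactness of $T_1-U^{-1}T_2U$ (with $T_i=\lambda(\widehat f_i)P$) against the moving orthonormal family $\xi_n=\tau_{nb}\xi$ of translates of a single bump: translation invariance gives $\|T_1\xi_n\|_2=a>0$ for all $n$, while $U\xi_n$ has support of diameter tending to $0$ (this uses only $\min u'\to\infty$ on the translated windows, no ratio control), hence $\|U\xi_n\|_1\to 0$ by Cauchy--Schwarz, hence its inverse Fourier transform $\eta_n$ satisfies $\|\eta_n\|_\infty\to 0$, and a two-region estimate gives $\|f_2\eta_n\|_2\to 0$, i.e.\ $U^{-1}T_2U\xi_n\to 0$ in norm. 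The decisive move is passing to the Fourier side, where smallness is measured in sup-norm against the fixed $C_0$ multiplier $f_2$, so the pointwise behavior of $u'$ is irrelevant. To repair your argument you would either have to add a slow-variation hypothesis on $u'$ (weakening the proposition) or, as the paper does, replace the fixed-vector strong limit by a norm estimate along an orthonormal sequence escaping to $+\infty$.
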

\begin{proof}
Let $P$ denote orthogonal projection onto $L^2([0,\infty)) \subset L^2(\R)$. Let $f_1,f_2 \in C_0(\R)$ so that $\widehat f_1$ and $\widehat f_2$ are arbitrary elements of $C^*(\R)$. Let $T_i = \lambda(\widehat f_i) P$, so that $T_1,T_2 \in \T_\R$ and $\sigma_\R(T_i) = \widehat f_i$. We need to show that, if $f_1$ and $f_2$ are not both zero, then $T_1 - U^{-1}T_2U$ is not compact. We consider   only the case where $f_1 \neq 0$ (the case where $f_2$ is not zero and $f_1$ is possibly zero is similar). 

Choose a continuous compactly-supported function  $\xi$ on $\R$ with $\|\xi\|_2 \neq 0$ such that $\|\lambda( \widehat f_1) \xi\|_2 = a > 0$. Since $\lambda(f_1)$ commutes with  the translation, we may assume without loss of generality that $\xi$ is supported in $[0,\infty)$. Let $b>0$ be larger than the diameter of the support of $\xi$ so that $\xi_n \colonequals \tau_{nb}(\xi)$, $n=0,1,2,\ldots$ defines an orthonormal sequence in $L^2([0,\infty))$.  For each $n$, we have $\|T_1 \xi_n\|_2 = a$. 

We  claim that $U^{-1}T_2 U \xi_n \to 0$. Once this is proved, we will have that the image by $T_1 - U^{-1}T_2 U$ of an orthonormal sequence does not converge to zero, which shows $T_1 - U^{-1}T_2U$ is not a compact operator. For $n$ sufficiently large, $U \xi_n$ is supported inside $[0,\infty)$, and so $\| U^{-1}T_2 U \xi_n\|_2 = \|\lambda(\widehat f_2) U \xi_n\|_2 = \| f_2 \eta_n\|_2$ where $\eta_n \in L^2(\R)$ is defined by $\widehat{\eta_n} = U \xi_n$. Since $\|U\xi_n\|_2 = 1$, but the diameter of the support of $U \xi_n$ goes to $0$ as $n \to \infty$, an application of the Cauchy-Schwartz inequality, gives that the $L^1$-norm of $U \xi_n$ vanishes as $n \to \infty$. On the other side of the Fourier transform, we then have that $\eta_n \in C_0(\R) \cap L^2(\R)$ and $\| \eta_n\|_\infty \to 0$.   The result will follow if we show that $\| f_2 \eta_n\| \to 0$. This follows from the following simple estimate, valid for any $R > 0$:
\[ \|f_2 \eta_n\|_2^2 \leq \|\eta_n\|_\infty^2 \left( \int_{|x|<R} |f_2(x)|^2 \ dx  \right)+  \sup_{|x| > R} |f_2(x)|^2.  \]
\end{proof}

\section{The smooth algebras of the foliations $\F^k_\R$}

We now come to the central result in  this article, that the smooth convolution algebras $\A(\F^k_\R)$, as $k$ ranges over the positive integers, are pairwise nonisomorphic. The notion of isomorphism being considered here is slightly stronger simple algebraic isomorphism.

\begin{defn}\label{IsomDefn}
Let $k$ and $\ell$ be positive integers. An \emph{isomorphism} $\A(\F^k_\R) \to \A(\F^\ell_\R)$ will refer to   a $*$-algebra isomorphism which can  furthermore be (uniquely) extended to a C*-algebra isomorphism $C^*(\A^k_\R) \to C^*(\F^\ell_\R)$. 
\end{defn}

By the results of Section~\ref{Cstarclass}, we therefore already have that $\A(\F^k_\R)$ cannot be isomorphic to $\A(\F^\ell_\R)$ if $k$ and $\ell$ do not have the same parity.

\begin{rmk}
Nearly any continuity hypothesis one might think to impose on a map at the level of smooth algebras  guarantees the existence of an extension to a map of C*-algebras. It is entirely plausible that every (algebraic) $*$-automorphism of $\A(\F^k_\R)$ is necessarily continuous, and therefore of the above form. However, we prefer not to get bogged down with delicate questions about automatic continuity.
\end{rmk}

Recall that the smooth convolution algebra $\A(\F^k_\R)$ is canonically isomorphic to $C_c^\infty(\R \rtimes_\phi \R)$, where $\phi$ denotes the flow of any complete vector field generating $\F^k_\R$. The convolution  product and adjoint operation on $C_c^\infty(\R \rtimes_\phi \R)$ are defined by as follows:
\begin{align}\label{convinv}
(f  * g)(x,t) &= \int_\R f(\phi_u(x),t-u)g(x,u) \ du \\
\nonumber f^*(x,t) &= \overline{f(\phi_t(x), -t)}.
\end{align}
Evaluation at $x=0$ leads to an exact sequence
\begin{align*}
\begin{tikzcd}[ampersand replacement=\&]
0 \ar[r] \&  C_c^\infty(\R \rtimes_\phi \R) \cdot x  \ar[r] \&  C_c^\infty(\R \rtimes_\phi \R) \ar[r] \& C_c^\infty(\R) \ar[r] \& 0 
\end{tikzcd}
\end{align*}
The algebra on the left is the ideal in $C_c^\infty(\R \rtimes_\phi \R)$ of functions which vanish to first order on $\{0\} \times \R$, the isotropy group at $0$. The logic behind the  notation for this ideal is as follows: one understands the symbol $x$ as referring to the  coordinate function of  $\R$ and then uses the fact that the algebra $C^\infty(\R)$ of smooth functions on the base manifold multiplies $C_c^\infty(\R \rtimes_\phi \R)$ from the left and the right according to the formulas below.
\begin{align*}
(f \cdot g)(x,t) &= f(\phi_t(x)) g(x,t) && \\ 
(g\cdot f)(x,t) &= f(x)g(x,t) && f \in C^\infty(\R) && g \in C_c^\infty(\R \rtimes_\phi \R).
\end{align*}
Note that this product satisfies the expected associative laws:
\begin{align*}
f\cdot (g * h) &= (f \cdot g) * h && \\ 
(g * h) \cdot f &= g *(h \cdot f) && f \in C^\infty(\R) && g,h \in C_c^\infty(\R \rtimes_\phi \R).
\end{align*}
We shall need to know the following:
\begin{propn}\label{intrinsicbasecase}
The ideal $x \cdot C_c^\infty(\R \rtimes_\phi \R)$ is ``intrinsic'' to the algebra $C_c^\infty(\R \rtimes_\phi \R)$ in the sense that any automorphism of the ambient algebra maps this ideal onto itself.
\end{propn}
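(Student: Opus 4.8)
The plan is to characterize the ideal $x \cdot C_c^\infty(\R \rtimes_\phi \R)$ intrinsically — that is, purely in terms of the algebraic/topological structure of the ambient $*$-algebra, without reference to the coordinate $x$ — so that any automorphism is forced to preserve it. The natural candidate for such a characterization uses the C*-algebraic results of Section~\ref{Cstarclass}. Recall from Definition~\ref{IsomDefn} that an isomorphism of smooth algebras extends (uniquely) to a C*-isomorphism, and from Lemma~\ref{CstarIntrinsic} that the essential ideal $I \cong \K \oplus \K$ (the kernel of $C^*(\F^k_\R) \to C^*(\R)$) is intrinsic to $C^*(\F^k_\R)$. The ideal $x \cdot C_c^\infty(\R \rtimes_\phi \R)$ consists of functions vanishing to first order on the isotropy group $\{0\} \times \R$, so on the C*-level it should correspond to the smooth elements lying in $I$. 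The idea is therefore to show
\begin{align*}
x \cdot C_c^\infty(\R \rtimes_\phi \R) = C_c^\infty(\R \rtimes_\phi \R) \cap I,
\end{align*}
where the intersection is taken inside $C^*(\F^k_\R)$ via the canonical inclusion.

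First I would establish this identity. The inclusion $x \cdot C_c^\infty(\R \rtimes_\phi \R) \subset I$ is the easy direction: a function of the form $x \cdot g$ vanishes at $x=0$, hence restricts to zero on the isotropy group, hence maps to zero under the evaluation-at-$0$ quotient map $C^*(\F^k_\R) \to C^*(\R)$ (using the compatibility of the smooth-algebra evaluation sequence with its C*-completion \eqref{inducer}), placing it in $I$. For the reverse inclusion, suppose $f \in C_c^\infty(\R \rtimes_\phi \R)$ lies in $I$, meaning $f$ vanishes on $\{0\} \times \R$; I must produce a \emph{smooth} factorization $f = x \cdot g$ with $g$ again smooth and compactly supported. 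Writing $(x \cdot g)(x,t) = f(\phi_t(x)) g(x,t)$ or the analogous left-module formula, and using that $f(0,t)=0$, a Hadamard-lemma division argument shows the quotient by $x$ is smooth; the one subtlety is that the $\phi_t$-twisting in the module action must be accounted for, but smoothness of the flow makes the divided function smooth in both variables, and compact support is inherited. This is where the \textbf{factorization issue flagged in the introduction — requiring the Dixmier-Malliavin theorem of \cite{Francis[DM]} — enters}, and I expect it to be the main obstacle: a naive pointwise division produces a function that is smooth but need not a priori lie in the convolution ideal in the required form, so one may need to combine Hadamard division with a Dixmier-Malliavin factorization to land back inside $C_c^\infty(\R \rtimes_\phi \R) \cdot x$.

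Granting the identity, the intrinsicness follows cleanly. Let $\theta$ be any automorphism of $C_c^\infty(\R \rtimes_\phi \R)$; by Definition~\ref{IsomDefn} it extends to a C*-automorphism $\overline\theta$ of $C^*(\F^k_\R)$. By Lemma~\ref{CstarIntrinsic}, $\overline\theta(I) = I$. Since $\theta$ is the restriction of $\overline\theta$ and preserves the smooth subalgebra by hypothesis, we get
\begin{align*}
\theta\big(x \cdot C_c^\infty(\R \rtimes_\phi \R)\big) = \theta\big(C_c^\infty(\R \rtimes_\phi \R) \cap I\big) = C_c^\infty(\R \rtimes_\phi \R) \cap \overline\theta(I) = C_c^\infty(\R \rtimes_\phi \R) \cap I,
\end{align*}
which is again $x \cdot C_c^\infty(\R \rtimes_\phi \R)$. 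Thus the ideal is intrinsic. The conceptual point is that intrinsicness of the first-order ideal is inherited from intrinsicness of the essential $\K \oplus \K$ ideal at the C*-level, with the smooth-versus-continuous bookkeeping handled by the factorization lemma; everything else is a matter of checking that the evaluation-at-$0$ sequences match up across the completion, which is routine given Section~\ref{Preliminaries}.
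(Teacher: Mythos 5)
Your proposal is correct and follows essentially the same route as the paper: the paper's proof likewise identifies $x \cdot C_c^\infty(\R \rtimes_\phi \R)$ as $C_c^\infty(\R \rtimes_\phi \R) \cap I$ inside the C*-completion $C_0(\R) \rtimes_\phi \R$ and then invokes Lemma~\ref{CstarIntrinsic} together with Definition~\ref{IsomDefn}. One small correction: the Dixmier--Malliavin worry you flag is unnecessary here, because $x \cdot C_c^\infty(\R \rtimes_\phi \R)$ is a pointwise module product rather than a convolution product, so Hadamard division (with the smooth, nonvanishing cocycle factor absorbing the $\phi_t$-twist) lands directly in the required form --- Dixmier--Malliavin enters only later, in Corollary~\ref{smoothintrinsic}, to pass from the case $p=1$ to the higher-order ideals $x^p \cdot C_c^\infty(\R \rtimes_\phi \R)$.
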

\begin{proof}
Working inside the C*-completion $C_0(\R) \rtimes_\phi \R$, we have that $x \cdot C_c^\infty(\R \rtimes_\phi \R)$ is the intersection with $C_c^\infty(\R \rtimes_\phi \R)$ of the ideal $I \cong \K \oplus \K$  of Section~\ref{Cstarclass} given as the kernel of the map $C_0(\R) \rtimes_\phi \R \to C^*(\R)$. The conclusion follows from Proposition~\ref{CstarIntrinsic} and Definition~\ref{IsomDefn}.
\end{proof}

More generally, we can consider nested sequence of $*$-invariant ideals 
\[ C_c^\infty(\R \rtimes_\phi \R) \cdot x^p   \subset C_c^\infty(\R \rtimes_\phi \R) \]corresponding to the functions which vanish to $p$th order on $\{0\} \times \R$ as well as  the infinite vanishing order ideal
\[C_c^\infty(\R \rtimes_\phi \R) \cdot x^\infty \colonequals \bigcap_{p=1}^\infty  C_c^\infty(\R \rtimes_\phi \R) \cdot x^p. \] That these spaces of functions are ideals with respect to convolution follows from Corollary~\ref{DeltaId} below. One may see Section~7 of \cite{Francis[DM]} for treatment of a more general situation. Let us first recall the following definition.
 \begin{defn}
 Let $G$ be a Lie groupoid. A  smooth function $\Delta: G \to (0,\infty)$ is called if a 1-cocycle if  $\Delta(gh)=\Delta(g)\Delta(h)$ is satisfied for all composable $g,h \in G$.  
 \end{defn}

If $\Delta$ is a smooth 1-cocycle of a Lie groupoid $G$, then the pointwise product $f \mapsto \Delta f$ is an algebra automorphism for the convolution product on $C_c^\infty(G)$ determined by (any) smooth Haar system. 

\begin{propn}
Let $\phi$ be the flow of a complete vector field generating $\F^k_\R$. Then, there exists a smooth 1-cocycle $\Delta : C_c^\infty(\R \rtimes \phi \R) \to (0,\infty)$ such that $\Delta(x,t)=\frac{\phi_t(x)}{x}$ is satisfied whenever $(x,t) \in \R \rtimes_\phi \R$ and $x \neq 0$. 
\end{propn}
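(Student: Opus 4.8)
The plan is to read the formula $\Delta(x,t) = \phi_t(x)/x$ literally on the dense open set where $x \neq 0$, and then to show that it extends smoothly and positively across the isotropy group $\{0\} \times \R$. The only genuine difficulty is the apparent $0/0$ indeterminacy along $x = 0$, since $\phi_t(0) = 0$ for every $t$; the cocycle and positivity properties are comparatively formal and will follow by continuity once the extension is in hand.

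First I would record that the formula defines a cocycle away from the fixed point. A composable pair in $\R \rtimes_\phi \R$ has the form $g = (\phi_{t_1}(x), t_2)$, $h = (x, t_1)$ with product $gh = (x, t_1 + t_2)$, so using the flow law $\phi_{t_2} \circ \phi_{t_1} = \phi_{t_1 + t_2}$ one computes, whenever $x \neq 0$,
\[
\Delta(g)\,\Delta(h) = \frac{\phi_{t_2}(\phi_{t_1}(x))}{\phi_{t_1}(x)} \cdot \frac{\phi_{t_1}(x)}{x} = \frac{\phi_{t_1+t_2}(x)}{x} = \Delta(gh).
\]
Next, to resolve the singularity along $x=0$, I would invoke Hadamard's lemma with smooth parameter dependence. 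The flow map $(x,t) \mapsto \phi_t(x)$ is jointly smooth (smooth dependence of solutions of an ODE on initial conditions and time), and it satisfies $\phi_t(0) = 0$ for all $t$ because the origin is a fixed point of the generating vector field. Hence $\phi_t(x) = x\, H(x,t)$, where
\[
H(x,t) = \int_0^1 \frac{\partial \phi_t}{\partial x}(sx)\, ds
\]
is jointly smooth in $(x,t)$. Setting $\Delta \colonequals H$ then gives a smooth function with $\Delta(x,t) = \phi_t(x)/x$ for $x \neq 0$ and $\Delta(0,t) = \phi_t'(0)$, which is the required smooth extension of the prescribed formula.

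It remains to check positivity and to upgrade the cocycle identity to the whole groupoid. For positivity, since $\phi_t$ is an orientation-preserving diffeomorphism of $\R$ fixing $0$, its spatial derivative $\frac{\partial \phi_t}{\partial x}$ is strictly positive everywhere, so the integral formula above yields $\Delta(x,t) > 0$ for all $(x,t)$; equivalently, $\phi_t$ maps $\R_+$ into $\R_+$ and $\R_-$ into $\R_-$, so the ratio $\phi_t(x)/x$ is positive. Finally, $\Delta$ is continuous on $\R \rtimes_\phi \R$ and agrees with a $1$-cocycle on the dense subset $\{x \neq 0\}$, so the identity $\Delta(x, t_1+t_2) = \Delta(\phi_{t_1}(x), t_2)\,\Delta(x,t_1)$ extends to all $x$ by continuity, completing the proof. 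The main obstacle is precisely the smoothness of $\Delta$ across $x=0$, which is exactly what the Hadamard division lemma supplies; everything else is either a direct computation or a density argument.
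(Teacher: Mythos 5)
Your proof is correct, but it establishes the key point---smoothness of $\Delta$ across $x=0$---by a genuinely different route than the paper. The paper exploits the specific structure of the generator: writing $X = h(x)\,x^k\frac{d}{dx}$ with $h$ smooth and nowhere vanishing, and using the relation $\phi_t'(x) = \frac{h(\phi_t(x))(\phi_t(x))^k}{h(x)x^k}$ off the fixed point, it solves for the ratio as
\[ \frac{\phi_t(x)}{x} = \left( \frac{h(x)}{h(\phi_t(x))}\,\phi_t'(x)\right)^{1/k}, \qquad x \neq 0, \]
and observes that the right-hand side is manifestly smooth and positive on all of $\R\rtimes_\phi\R$; multiplicativity is then extended by continuity from the dense subgroupoid $\{x\neq 0\}$, exactly as in your final step. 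You instead apply the Hadamard division lemma with smooth parameter dependence directly to the flow, obtaining $\Delta(x,t)=\int_0^1 \phi_t'(sx)\,ds$, with positivity coming from $\phi_t'>0$ (valid since each $\phi_t$ is a diffeomorphism isotopic to the identity). What each approach buys: yours is more elementary and strictly more general---it never uses that $X$ generates $\F^k_\R$, only that the flow is jointly smooth and fixes the origin, so it proves the proposition verbatim for an arbitrary smooth flow on $\R$ fixing $0$ (which is in fact the level of generality the paper later needs when discussing $C_c^\infty(\R)[[x]]$). The paper's route, by contrast, yields a closed formula exhibiting $\Delta$ as the $k$-th root of the derivative cocycle $\phi_t'(x)$ times the coboundary $h(x)/h(\phi_t(x))$, which makes transparent both where the integer $k$ enters and the relationship with the cocycle $\beta(x,t)=(\phi_t'(x))^{1/2}$ from Section~2. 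Your explicit verification of the cocycle identity on $\{x\neq 0\}$ via the flow law, and the closing density argument, coincide with the paper's.
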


\begin{proof}
The vector field $X$ whose flow is $\phi$ has the form $X=h(x) x^k \frac{d}{dx}$, where $h$ is a smooth, nowhere vanishing function. We have $\phi'_t(x) = \frac{h(\phi_t(x)) (\phi_t(x))^k}{h(x)x^k}$ whenever $x \neq 0$, and so
\[ \frac{\phi_t(x)}{x} =  \left( \frac{h(x)}{h(\phi_t(x))} \phi_t'(x) \right)^{1/k} \]
for $x \neq 0$. The expression on the right defines a smooth, positive-valued function $(x,t) \mapsto \Delta(x,t)$ on all of $\R \rtimes_\phi \R$. The expression on the left gives us that $\Delta$ is multiplicative on the dense subgroupoid of $\R \rtimes_\phi \R$ consisting of $(x,t)$ with $x \neq 0$. By continuity, $\Delta$ is multiplicative everywhere.
\end{proof}

\begin{cor}\label{DeltaId}
In the context of the proposition above, for every positive integer $p$, we have that $C_c^\infty(\R \rtimes_\phi \R) \cdot x^p = x^p  \cdot C_c^\infty(\R \rtimes_\phi \R)$ is a $*$-invariant ideal in $C_c^\infty(\R \rtimes_\phi \R)$. 
\end{cor}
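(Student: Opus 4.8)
The plan is to extract everything from the single algebraic identity $\phi_t(x) = x\,\Delta(x,t)$, which holds for all $(x,t) \in \R \rtimes_\phi \R$: it is the defining property of $\Delta$ for $x \neq 0$ and extends to $x = 0$ by continuity, both sides vanishing there since the origin is fixed. Raising to the $p$th power gives $(\phi_t(x))^p = x^p\,\Delta(x,t)^p$ as an equality of smooth functions. I would then read the two module actions of $C^\infty(\R)$ through this identity: by the displayed multiplication formulas, $(g \cdot x^p)(x,t) = x^p\,g(x,t)$ records honest vanishing to order $p$ on $\{0\}\times\R$, whereas $(x^p \cdot g)(x,t) = (\phi_t(x))^p\,g(x,t)$ is its $\phi$-twisted analogue. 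The whole corollary is organized around showing these two descriptions define the same subspace.

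First I would prove the equality $C_c^\infty(\R\rtimes_\phi\R)\cdot x^p = x^p \cdot C_c^\infty(\R\rtimes_\phi\R)$, which is the only step where the geometry of the flow genuinely enters, and hence the main obstacle. Since $\Delta$ is smooth and strictly positive, both $\Delta^p$ and $\Delta^{-p}$ are smooth, so pointwise multiplication by either preserves $C_c^\infty(\R\rtimes_\phi\R)$. Given $g \in C_c^\infty(\R\rtimes_\phi\R)$, the identity $(\phi_t(x))^p = x^p\Delta(x,t)^p$ yields $x^p \cdot g = (\Delta^p g)\cdot x^p$ and, run backwards, $g \cdot x^p = x^p \cdot (\Delta^{-p} g)$, giving both inclusions at once. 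The point to stress is that without the multiplicative factor $\Delta$ converting $(\phi_t(x))^p$ into $x^p$, the left and right module structures would produce genuinely different subspaces; it is precisely the cocycle produced in the preceding proposition that forces them to agree.

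With the equality secured, the ideal and $*$-invariance properties become formal. Applying the associativity law $(a*b)\cdot c = a*(b\cdot c)$ with $c = x^p$ shows that $C_c^\infty(\R\rtimes_\phi\R)\cdot x^p$ is closed under left convolution, while applying $c\cdot(a*b) = (c\cdot a)*b$ with $c = x^p$ shows that $x^p \cdot C_c^\infty(\R\rtimes_\phi\R)$ is closed under right convolution; since the two sets coincide, the common set is a two-sided ideal. For $*$-invariance I would compute the adjoint directly: writing a member as $h = g\cdot x^p$, so $h(x,t) = x^p g(x,t)$, the adjoint formula gives $h^*(x,t) = \overline{(\phi_t(x))^p\, g(\phi_t(x),-t)} = (\phi_t(x))^p\,g^*(x,t) = (x^p \cdot g^*)(x,t)$, using that $\phi_t(x)$ is real. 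Thus $h^* = x^p \cdot g^* \in x^p \cdot C_c^\infty(\R\rtimes_\phi\R)$, which is the ideal itself by the first step, completing the argument.
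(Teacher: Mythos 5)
Your proposal is correct and follows essentially the same route as the paper: the paper's proof is the one-line observation that the preceding proposition gives $x \cdot f = (\Delta f)\cdot x$, which is exactly your identity $\phi_t(x) = x\,\Delta(x,t)$ applied to $f$, iterated to the $p$th power. You have merely written out the formal consequences the paper leaves implicit (the swap $x^p \cdot g = (\Delta^p g)\cdot x^p$ using only smoothness and positivity of $\Delta$, the two-sided ideal property via the module associativity laws, and the direct adjoint computation), all of which check out.
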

\begin{proof}
The above proposition gives the relation $x \cdot f = ( \Delta f) \cdot x$ for every $f \in C_c^\infty(\R \rtimes_\phi \R)$. From this, the result follows. 
\end{proof}

In fact the sequence of ideals $x^p \cdot C_c^\infty(\R \rtimes_\phi \R)$ is also intrinsic. To see this, we need the following Dixmier-Malliavin theorem.

\begin{propn}\label{DMspecial}
The smooth convolution algebra $C_c^\infty(\R \rtimes_\phi \R)$ satisfies $C_c^\infty(\R \rtimes_\phi \R)*C_c^\infty(\R \rtimes_\phi \R)=C_c^\infty(\R \rtimes_\phi \R)$. 
\end{propn}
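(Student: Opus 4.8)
The plan is to reduce the statement to the classical Dixmier--Malliavin theorem for the group $\R$ (stated above) by exploiting the crossed-product structure of the convolution algebra. Via the isomorphism $\Psi$ of Section~\ref{Preliminaries}, $C_c^\infty(\R \rtimes_\phi \R)$ is the smooth crossed product of $C_c^\infty(\R)$ by the flow action of $\R$; the base algebra trivially satisfies $C_c^\infty(\R) = C_c^\infty(\R)\cdot C_c^\infty(\R)$ (pointwise), and $\R$ satisfies Dixmier--Malliavin, so one expects the crossed product to inherit factorization. Concretely, for $\psi,f \in C_c^\infty(\R)$ write $(\psi \otimes f)(x,u) \colonequals \psi(x)f(u) \in C_c^\infty(\R \rtimes_\phi \R)$ and observe the identity
\[
(\eta * (\psi \otimes f))(x,t) = \int_\R \eta(\phi_u(x), t-u)\,\psi(x)\,f(u)\,du = \psi(x)\,(\beta(f)\eta)(x,t),
\]
where $(\beta_u\eta)(x,t) \colonequals \eta(\phi_u(x), t-u)$ defines a smooth action $\beta$ of the additive group $\R$ on functions on the groupoid and $\beta(f) = \int_\R f(u)\,\beta_u\,du$. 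A direct check gives $\beta_u\beta_v = \beta_{u+v}$, so $\beta$ is a genuine group representation, and right convolution by the special elements $\psi \otimes f$ is, up to the multiplier $\psi(x)$, exactly the integrated action $\beta(f)$.

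With this identity in hand, I would argue as follows. Given $F \in C_c^\infty(\R \rtimes_\phi \R)$, apply the module (G\aa rding) form of Dixmier--Malliavin to the smooth $\R$-representation $\beta$ to produce a finite factorization $F = \sum_{i=1}^N \beta(f_i)\eta_i$ with $f_i \in C_c^\infty(\R)$ and $\eta_i \in C_c^\infty(\R \rtimes_\phi \R)$. Then choose $\psi \in C_c^\infty(\R)$ equal to $1$ on the projection to the base of $\mathrm{supp}(F)$, so that $\psi(x)F(x,t) = F(x,t)$. Multiplying the factorization pointwise by $\psi(x)$ and using the identity above termwise yields
\[
F(x,t) = \psi(x)F(x,t) = \sum_{i=1}^N \psi(x)\,(\beta(f_i)\eta_i)(x,t) = \sum_{i=1}^N (\eta_i * (\psi \otimes f_i))(x,t),
\]
which exhibits $F$ as a finite sum of convolution products in $C_c^\infty(\R \rtimes_\phi \R)$, as required.

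The main obstacle lies entirely in the functional analysis of the module factorization, specifically the bookkeeping of supports. The clean Fr\'{e}chet version of the module theorem would be applied to $C^\infty(\R\times\R)$, on which $\beta$ acts smoothly, but this loses the compact support of the $\eta_i$; and because the flow $\phi$ has no compact invariant set other than $\{0\}$ (its orbits exhaust each half-line), one cannot confine the $x$-support to a fixed flow-invariant compact set. The correct framework is therefore the compactly-supported (LF-space) form of the module factorization, for which $\beta_u$ preserves $C_c^\infty(\R\times\R)$ and, over any bounded range of $u$, moves supports within a single compact set, so that $u \mapsto \beta_u\eta$ is smooth into a fixed Fr\'{e}chet step. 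Establishing this LF-version while retaining compact supports is precisely the content of the general Lie-groupoid Dixmier--Malliavin theorem of \cite{Francis[DM]}, of which the present proposition is a special case. Accordingly, the quickest route to the statement is simply to invoke that result for the Lie groupoid $\R \rtimes_\phi \R$; I would present the crossed-product reduction above as the conceptual explanation and defer the LF-space factorization to \cite{Francis[DM]}.
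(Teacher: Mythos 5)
Your proposal is correct and takes essentially the same route as the paper: the paper's proof likewise defers the hard factorization to \cite{Francis[DM]}, noting that its Theorem~3 gives precisely your group-direction factorization (any $f \in C_c^\infty(\R \rtimes_\phi \R)$ as a two-term sum $g_0 * h_0 + g_1 * h_1$ with $g_i \in C_c^\infty(\R)$ acting through the $\R$-direction) and then restores compact support in the base by multiplying with a cutoff function, exactly as you do with $\psi$. Your explicit computation with the action $\beta$ is a pleasant conceptual gloss, but the substance --- reducing to the compactly-supported module form of Dixmier--Malliavin and citing \cite{Francis[DM]} for it --- coincides with the paper's argument.
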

\begin{proof}
This is a special case of Theorem~2 in \cite{Francis[DM]}. In fact, rather than using the full force of Theorem~2, one can instead use \cite{Francis[DM]}, Theorem~3 (which avoids the most technical parts of that paper) to write  any $f \in C_c^\infty(\R \rtimes_\phi \R)$ as $f = g_0 * h_0 + g_1 * h_1$ where $g_0,g_1 \in C_c^\infty(\R)$ and $h_0,h_1 \in C_c^\infty(\R \rtimes_\phi \R)$. Left multiplying by a suitable cutoff function then gives a representation of $f$ as a two-term sum in which each term is the convolution of functions in $C_c^\infty(\R \rtimes_\phi \R)$. 
\end{proof}

\begin{cor}\label{smoothintrinsic}
For any automorphism $\theta$  of $C_c^\infty(\R \rtimes_\phi \R)$ and any $p=1,2,\ldots, \infty$, we have  $\theta(x^p \cdot C_c^\infty(\R \rtimes_\phi \R))=x^p \cdot C_c^\infty(\R \rtimes_\phi \R)$.
\end{cor}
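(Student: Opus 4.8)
The plan is to reduce everything to the base case $p = 1$ (Proposition~\ref{intrinsicbasecase}) by an induction whose engine is a multiplicative identity among the ideals. Write $A := C_c^\infty(\R \rtimes_\phi \R)$ and $J_p := x^p \cdot A = A \cdot x^p$, the two descriptions agreeing by Corollary~\ref{DeltaId}. The key claim is that
\[ J_p = J_1 * J_{p-1}, \qquad p \geq 2, \]
where the right-hand side denotes the linear span of convolution products. Granting this, the finite cases follow by induction on $p$: an automorphism $\theta$ is linear and multiplicative, so $\theta(J_1 * J_{p-1}) = \theta(J_1) * \theta(J_{p-1})$, which equals $J_1 * J_{p-1} = J_p$ once we use the base case $\theta(J_1) = J_1$ and the inductive hypothesis $\theta(J_{p-1}) = J_{p-1}$. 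The case $p = \infty$ is then immediate: since the bijection $\theta$ fixes every $J_p$, it fixes $J_\infty = \bigcap_{p} J_p$, because $\theta\big(\bigcap_p J_p\big) = \bigcap_p \theta(J_p) = \bigcap_p J_p$.

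It remains to establish the identity $J_p = J_1 * J_{p-1}$, and here is where I would spend the effort. The inclusion $J_1 * J_{p-1} \subseteq J_p$ is the easy (purely order-of-vanishing) direction. For the reverse inclusion, I would move powers of the coordinate function $x$ across the convolution using the relation $x \cdot f = (\Delta f) \cdot x$ from Corollary~\ref{DeltaId}, together with the one-sided module associativity laws $x \cdot (g * h) = (x \cdot g) * h$ and $g * (h \cdot x) = (g * h) \cdot x$. A short computation shows that a generating product factors as
\[ (x \cdot g) * (x^{p-1} \cdot h) = \Delta \cdot \big(g * (\Delta^{p-1} h)\big) \cdot x^p, \qquad g, h \in A. \]
Taking linear spans, $J_1 * J_{p-1} = \Delta \cdot \big(A * (\Delta^{p-1} A)\big) \cdot x^p$. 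Because $\Delta$ and $\Delta^{p-1}$ are smooth, strictly positive functions, hence invertible multipliers of $A$, we have $\Delta^{p-1} A = A$; invoking the Dixmier--Malliavin factorization $A * A = A$ (Proposition~\ref{DMspecial}) then collapses the inner term to $A$, and a final application of $\Delta \cdot A = A$ yields $J_1 * J_{p-1} = A \cdot x^p = J_p$.

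The main obstacle is precisely the reverse inclusion $J_p \subseteq J_1 * J_{p-1}$: without Dixmier--Malliavin one knows only $A * A \subseteq A$, and the surjectivity $A * A = A$ is exactly what is needed to realize an arbitrary element of $J_p$ as a finite sum of products drawn from $J_1$ and $J_{p-1}$. The supporting bookkeeping---verifying $x^m \cdot f = (\Delta^m f) \cdot x^m$ by iterating the relation of Corollary~\ref{DeltaId}, and confirming that pointwise multiplication by the cocycle $\Delta$ is an invertible multiplier commuting appropriately with the two actions of $C^\infty(\R)$---is routine and may be checked pointwise. I would take care only to emphasize that the Dixmier--Malliavin statement is algebraic (finite sums), so the identity $J_p = J_1 * J_{p-1}$ holds on the nose, with no recourse to any topological closure.
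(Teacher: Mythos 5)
Your proof is correct and takes essentially the same route as the paper's: the paper reduces to the base case via the identity $(x \cdot C_c^\infty(\R \rtimes_\phi \R))^{*p} = x^p \cdot C_c^\infty(\R \rtimes_\phi \R)$, obtained from the cocycle relation of Corollary~\ref{DeltaId} and the Dixmier--Malliavin factorization of Proposition~\ref{DMspecial}, and your inductive identity $J_p = J_1 * J_{p-1}$ is just a step-by-step version of that $p$-fold power computation. Your explicit handling of the $p=\infty$ case and of the $\Delta$-bookkeeping merely spells out what the paper leaves implicit.
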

\begin{proof}
The case $p=1$ is Proposition~\ref{intrinsicbasecase}. For $p$ a positive integer, it follows from writing $(x \cdot C_c^\infty(\R \rtimes_\phi \R))^{*p} = x^p * (C_c^\infty(\R \rtimes_\phi \R))^{*p} = x^p \cdot C_c^\infty(\R \rtimes_\phi \R)$ where the last equality uses Proposition~\ref{DMspecial}.

\end{proof}

The following result is the means by which we distinguish the different $\A(\F^k_\R)$.

\begin{thm}\label{commVSnoncomm}
Let $k$ and $p$ be positive integers. Then, the quotient of $\A(\F^k_\R)$ by the ideal $x^p \A(\F^k_\R)$ is commutative for $p \leq k$ and noncommutative for $p>k$.
\end{thm}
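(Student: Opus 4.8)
We want to understand the convolution structure of the quotient $\A(\F^k_\R)/x^p\A(\F^k_\R) \cong C_c^\infty(\R\rtimes_\phi\R)/\bigl(x^p\cdot C_c^\infty(\R\rtimes_\phi\R)\bigr)$. Elements of this quotient are represented by $p$-jets in the $x$-direction of functions $f(x,t)$ along the isotropy group $\{0\}\times\R$. The plan is to write everything in terms of the Taylor expansion $f(x,t)=\sum_{j=0}^{p-1} x^j f_j(t) + O(x^p)$ and to read off the convolution product on the coefficient data $(f_0,\dots,f_{p-1})$, then check directly for which $p$ the resulting product is commutative.

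First I would compute the convolution product modulo $x^p$. Using the defining formula $(f*g)(x,t)=\int_\R f(\phi_u(x),t-u)\,g(x,u)\,du$, I expand $f(\phi_u(x),t-u)$ as a power series in $x$. The key input is the behaviour of the flow near the fixed point: since $\phi$ is the flow of a vector field of the form $h(x)x^k\frac{d}{dx}$, one has $\phi_u(x)=x+O(x^k)$, so that $\phi_u(x)=x$ to order $x^{k}$. Consequently, for $j<k$ the coefficient extraction $\partial_x^j|_{x=0}$ of $f(\phi_u(x),t-u)$ sees only $x^j$ with $\phi_u(x)$ replaced by $x$; the dependence of $\phi_u(x)$ on $u$ does not enter until order $x^k$. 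I would make this precise by differentiating $\phi_u(x)$ repeatedly in $x$ and evaluating at $0$, noting that the first $x$-derivative of $\phi_u(x)$ that depends nontrivially on $u$ occurs at order $k$ (indeed $\partial_x\phi_u(x)|_{x=0}=1$ by the cocycle relation, and the higher derivatives vanish at $0$ up to order $k$).

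The consequence is the crux of the argument. For $p\le k$, the product on coefficients becomes, modulo $x^p$,
\[
(f*g)(x,t)=\sum_{m=0}^{p-1} x^m \int_\R \Bigl(\textstyle\sum_{i+j=m} f_i(t-u)\,g_j(u)\Bigr)\,du + O(x^p),
\]
i.e. a \emph{flow-independent}, purely convolutional product on $\C[x]/(x^p)$-valued functions of $t$, which is manifestly commutative because ordinary convolution on $\R$ and multiplication in $\C[x]/(x^p)$ both commute. For $p>k$, the term at order $x^k$ first feels the flow: expanding $\phi_u(x)$ to order $x^k$ produces a factor depending on $u$ (essentially the $u$-dependence of $\partial_x^k\phi_u(0)$), and symmetrizing $f*g$ versus $g*f$ no longer cancels. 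I would exhibit explicit coefficient functions $f_0,g_0$ (supported appropriately in $t$) for which the order-$x^k$ term of $f*g-g*f$ is nonzero, thereby proving noncommutativity. The cleanest way is to use the cocycle $\Delta$ of the earlier proposition: since $x\cdot f=(\Delta f)\cdot x$ with $\Delta(x,t)=\phi_t(x)/x$, and $\Delta(x,t)=1+c\,x^{k-1}t+O(x^{k})$ near $x=0$ for some $c\ne 0$ determined by $h(0)$, the nontrivial $t$-dependence of $\Delta$ at order $x^{k-1}$ is exactly the obstruction to $x$ commuting with coefficient functions once $p>k$.

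\textbf{The main obstacle.} The technical heart is the asymptotic analysis of the flow: establishing that $\phi_u(x)=x+O(x^k)$ uniformly enough to justify differentiating under the integral, and pinning down the precise form of the first nonvanishing $u$-dependent Taylor coefficient at order $x^k$ (equivalently, the expansion of $\Delta$). I expect the commutative direction $p\le k$ to be routine once the flow expansion is in hand, while the noncommutative direction $p>k$ requires carefully producing a witness pair and checking the order-$x^k$ term of the commutator does not vanish; the cocycle identity $x\cdot f=(\Delta f)\cdot x$ is the tool that makes this bookkeeping manageable.
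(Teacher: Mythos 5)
Your proposal is essentially the paper's own argument: the paper likewise transfers the convolution product through the Taylor-jet map onto $C_c^\infty(\R)[x]/(x^{p+1})$, uses the flow expansion $\phi_t(x) = x + t x^k + O(x^{k+1})$ (your assertion that the $u$-dependence of $\phi_u(x)$ first enters at order $x^k$) to conclude that below order $k$ the product is the ordinary commutative truncated-polynomial product over the convolution algebra $(C_c^\infty(\R),*)$, and extracts from that expansion the commutation relation $x \cdot f = f \cdot x + \delta(f)\cdot x^k$ with $\delta(f)(t)=tf(t)$, which is exactly your cocycle identity $x\cdot f = (\Delta f)\cdot x$ expanded to order $x^k$. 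Two small corrections to your sketch: a pair of zeroth-order jets $f_0, g_0$ always commutes (the functions of $t$ alone form a commutative subalgebra, since $f(\phi_u(x),t-u)=f(t-u)$ carries no $x$-dependence), so the noncommutativity witness must pit a class such as $f_1 x$ against $g_0$ --- which is precisely what your $\Delta$-relation produces --- and at $k=1$ the expansion $\Delta = 1 + c\,t\,x^{k-1} + O(x^k)$ is wrong as written (the zeroth-order term is $e^{ct}$, not $1+ct$, and $\partial_x\phi_u(x)|_{x=0}=e^{h(0)u}\neq 1$ there), which the paper accommodates by treating $k=1$ separately via the relation $x\cdot f = \Delta(f)\cdot x$, $\Delta(f)(t)=e^t f(t)$, still visibly noncommutative.
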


Taken together, Theorem~\ref{commVSnoncomm} and Corollary~\ref{smoothintrinsic} imply

\begin{thm}
The smooth convolution algebras $\A(\F^k_\R)$, $k$ a positive integer, are pairwise nonisomorphic.
\end{thm}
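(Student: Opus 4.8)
The plan is to use the nested sequence of ideals $x^p \cdot \A(\F^k_\R)$ as a source of isomorphism invariants. By Theorem~\ref{commVSnoncomm}, the largest $p$ for which the quotient $\A(\F^k_\R)/(x^p \cdot \A(\F^k_\R))$ is commutative is exactly $k$, so $k$ is recovered from the internal structure of $\A(\F^k_\R)$. The only real work is to check that an isomorphism of smooth algebras respects this entire ideal sequence, and hence must transport the invariant faithfully.

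First I would dispose of the case where $k$ and $\ell$ have opposite parity: by Definition~\ref{IsomDefn}, any isomorphism $\A(\F^k_\R) \to \A(\F^\ell_\R)$ extends to a C*-isomorphism $C^*(\F^k_\R) \to C^*(\F^\ell_\R)$, and the results of Section~\ref{Cstarclass} forbid such a C*-isomorphism when the parities differ. So I may assume $k$ and $\ell$ have the same parity, fix an isomorphism $\theta \colon \A(\F^k_\R) \to \A(\F^\ell_\R)$, and aim to prove $k = \ell$.

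Next I would show $\theta(x^p \cdot \A(\F^k_\R)) = x^p \cdot \A(\F^\ell_\R)$ for every $p = 1, 2, \ldots, \infty$; this is the two-algebra analogue of Corollary~\ref{smoothintrinsic} and its proof transfers essentially verbatim. For $p=1$, extend $\theta$ to a C*-isomorphism $\widetilde{\theta} \colon C^*(\F^k_\R) \to C^*(\F^\ell_\R)$. Each of these C*-algebras contains a unique essential ideal that is a finite direct sum of simple C*-algebras, namely the copy of $\K \oplus \K$ from Section~\ref{Cstarclass}; by the uniqueness clause of Lemma~\ref{CstarIntrinsic}, $\widetilde{\theta}$ must match these distinguished ideals, and intersecting with the dense smooth subalgebras yields $\theta(x \cdot \A(\F^k_\R)) = x \cdot \A(\F^\ell_\R)$ through the identification in Proposition~\ref{intrinsicbasecase}. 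For finite $p>1$ I would combine the fact that $\theta$ preserves convolution with the identity $(x \cdot \A(\F^k_\R))^{*p} = x^p \cdot \A(\F^k_\R)$, which rests on the Dixmier--Malliavin factorization of Proposition~\ref{DMspecial}; the case $p=\infty$ then follows by intersection. Consequently $\theta$ descends to a $*$-algebra isomorphism $\A(\F^k_\R)/(x^p \cdot \A(\F^k_\R)) \cong \A(\F^\ell_\R)/(x^p \cdot \A(\F^\ell_\R))$ for each $p$.

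Finally I would invoke Theorem~\ref{commVSnoncomm}: the left-hand quotient is commutative precisely when $p \leq k$ and the right-hand quotient precisely when $p \leq \ell$. Since isomorphic algebras are commutative or noncommutative together, the conditions $p \leq k$ and $p \leq \ell$ must hold for exactly the same positive integers $p$; taking $p = k$ gives $k \leq \ell$ and taking $p = \ell$ gives $\ell \leq k$, whence $k = \ell$. The single genuinely non-formal ingredient is upgrading the intrinsic-ness of the ideals from automorphisms to isomorphisms between two distinct algebras, and the crux there is exactly the uniqueness in Lemma~\ref{CstarIntrinsic}, which leaves the C*-level isomorphism no freedom in where it sends the distinguished $\K \oplus \K$ ideal; the remaining transport of the commutativity invariant through the quotients is routine.
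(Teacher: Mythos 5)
Your proposal is correct and takes essentially the same route as the paper, which deduces the theorem directly from Theorem~\ref{commVSnoncomm} together with Corollary~\ref{smoothintrinsic}: commutativity of the quotients by the intrinsic ideal sequence $x^p \cdot \A(\F^k_\R)$ recovers $k$. Your explicit two-algebra upgrade of Corollary~\ref{smoothintrinsic} (via the uniqueness in Lemma~\ref{CstarIntrinsic}, the identification in Proposition~\ref{intrinsicbasecase}, and the Dixmier--Malliavin factorization of Proposition~\ref{DMspecial}) is exactly the transfer the paper leaves implicit, and your preliminary parity reduction via Section~\ref{Cstarclass} is harmless but redundant.
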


It remains to establish  Theorem~\ref{commVSnoncomm}. We prove the following more precise result which has Theorem~\ref{commVSnoncomm} as a corollary.

\begin{thm}
Fix positive integers $p$ and $k$. Let $X$ be a complete vector field  generating $\F^k_\R$ and assume for simplicity that $X$ coincides with $x^k \frac{d}{dx}$ on a neighbourhood of $0$. Let $\phi$ be the flow of $X$ and form the transformation groupoid $\R \rtimes_\phi \R$ with usual coordinates $(x,t)$. Let 
\[ T :  C_c^\infty(\R \rtimes_\phi \R) \to \frac{C_c^\infty(\R) [x]}{(x^{p+1})} \]
be the surjective linear map which assigns to a function $f \in C_c^\infty(\R \rtimes_\phi \R)$ its $p$th order Taylor expansion $T(f) = f_0 + f_1 x+ \ldots f_p x^p$, where $f_n(t) = \frac{1}{n!} \frac{\partial^n f}{\partial x^n}(0,t)$. Since the kernel of $T$ is $x^{p+1} \cdot \A(\F^k_\R)$, we may use the map $T$ to transfer the ring structure of $\frac{C_c^\infty(\R \rtimes_\phi \R)}{x^{p+1} \cdot C_c^\infty(\R \rtimes_\phi \R)}$  to  $C_c^\infty(\R)[x]/(x^{p+1})$. 
\begin{enumerate}
\item If $p < k$, then the product  on $C_c^\infty(\R)[x]/(x^{p+1})$ induced by $T$  is the usual one given by multiplication followed by truncation (the product on the coefficient ring $C_c^\infty(\R)$ is convolution). 
\item  If $p=k \geq 2$, then the product on $C_c^\infty(\R)[x]/(x^{k+1})$  induced by $T$ is determined by the single relation $xf = fx + \delta(f)x^k$, where $\delta : C_c^\infty(\R) \to C_c^\infty(\R)$ is the derivation defined by $\delta(f)(t) = tf(t)$.
\item If $p=k=1$, then the product on $C_c^\infty(\R)[x]/(x^2)$  induced by $T$ is determined by the single relation $xf = \Delta(f)x$, where $\Delta : C_c^\infty(\R) \to C_c^\infty(\R)$ is the algebra automorphism defined by $\Delta(f)(t) = e^tf(t)$.
\end{enumerate}
\end{thm}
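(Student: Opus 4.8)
The plan is to reduce the whole computation to the asymptotics of the flow $\phi$ near the fixed point $0$, and then to read off the low-order Taylor coefficients of a convolution directly from the integral formula \eqref{convinv}.

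First I would note that the product on $C_c^\infty(\R)[x]/(x^{p+1})$ is by definition $T(f)\cdot T(g)=T(f*g)$, which is legitimate because $\ker T = x^{p+1}\cdot\A(\F^k_\R)$ is a two-sided ideal by Corollary~\ref{DeltaId}. By linearity of $T$ it suffices to compute $\frac{1}{l!}\partial_x^l(f*g)(0,t)$ for $0\le l\le p$ and arbitrary $f,g\in C_c^\infty(\R\rtimes_\phi\R)$. Since $g(x,\cdot)$ is compactly supported in the integration variable, uniformly in $x$, and all data are smooth, one may differentiate \eqref{convinv} under the integral sign; in particular the computation is purely local near $x=0$ and sees only the germ of $\phi$ there, where $X=x^k\frac{d}{dx}$ by hypothesis.

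The crucial input is the expansion of $\phi_u(x)$ as $x\to 0$, found by solving $\frac{d}{du}\phi_u(x)=(\phi_u(x))^k$ with $\phi_0(x)=x$. For $k=1$ this gives $\phi_u(x)=xe^u$ exactly, and for $k\ge 2$ it gives $\phi_u(x)=x+u\,x^k+O(x^{2k-1})$, the next correction living in degree $2k-1$; hence $\phi_u(x)^n = x^n + nu\,x^{n+k-1}+O(x^{n+2k-2})$. Substituting the first-variable Taylor expansions $f(y,s)=\sum_{n\le p}f_n(s)y^n + O(y^{p+1})$ and $g(x,u)=\sum_{m\le p}g_m(u)x^m+O(x^{p+1})$ into \eqref{convinv}, and using $\phi_u(x)=O(x)$ uniformly for $u$ in the compact support of $g$ to bound all remainders by $O(x^{p+1})$, the three cases separate purely by comparing the exponent $n+k-1$ of each flow-correction with the truncation order $p$. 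For $p<k$, every correction with $n\ge 1$ sits in degree $\ge k>p$, so $\phi_u(x)^n\equiv x^n$ modulo $x^{p+1}$ and one recovers the naive truncated product with convolution coefficients. For $p=k\ge 2$ exactly one correction survives, the $n=1$ term $u\,x^k$, contributing $\int_\R f_1(t-u)\,u\,g_0(u)\,du=(f_1*\delta(g_0))(t)$ in degree $k$. For $p=k=1$ the exact formula $\phi_u(x)=xe^u$ yields $\int_\R f_1(t-u)e^u g_0(u)\,du=(f_1*\Delta(g_0))(t)$ in degree $1$; here $\Delta(f)(t)=e^tf(t)$ is multiplication by the boundary value $\lim_{x\to 0}\phi_t(x)/x$ of the cocycle of the preceding proposition.

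It then remains to verify that the stated relations reproduce these collected coefficients. Expanding $(\sum_n f_n x^n)(\sum_m g_m x^m)$ by moving each $x^n$ rightward past the coefficients via $xf=fx+\delta(f)x^k$ (respectively $xf=\Delta(f)x$), one finds that, unless $n=1$ and $m=0$, the resulting correction either vanishes or is pushed into degree $>k$ and dies modulo $x^{k+1}$, leaving precisely the single term $(f_1*\delta(g_0))x^k$ (respectively $(f_1*\Delta(g_0))x$). This truncation bookkeeping — in particular confirming that iterating the relation generates no spurious lower-order corrections — is where I expect the main effort to lie. That $\delta$ is a derivation and $\Delta$ an automorphism of $(C_c^\infty(\R),*)$, which is what makes the relations define an associative product, follows from a one-line change of variables in the convolution integral. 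The principal difficulty is thus not a hard analytic estimate but the careful accounting of which flow-corrections survive the truncation, especially isolating the lone surviving correction in the critical case $p=k$ and matching it to the proposed ring relation.
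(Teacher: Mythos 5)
Your proposal is correct, and it reaches the theorem by a genuinely more direct route than the paper's. Both arguments turn on the same key input --- $\phi_u(x)=xe^u$ when $k=1$, and $\phi_u(x)=x+ux^k+O(x^{k+1})$ locally uniformly in $u$ when $k\geq 2$ --- but they deploy it differently. The paper writes out only the case $p=k\geq 2$ and argues structurally: it enlarges $C_c^\infty(\R\rtimes_\phi\R)$ to the translation algebra $C_u^\infty(\R\rtimes_\phi\R)$, precisely because the lifts $\widetilde f(x,t)=f(t)$ of coefficient functions fail to be compactly supported, notes that $f\mapsto\widetilde f$ is a multiplicative splitting of $T$ on coefficients, extends it to a linear splitting $S(\sum_n f_nx^n)=\sum_n\widetilde f_n\cdot x^n$, and then derives the single pointwise module-action identity $x\cdot f=f\cdot x+(\widetilde\delta f)\cdot x^k+r$, with $r$ vanishing to order $k+1$, straight from $\phi_t(x)=x+tx^k+h(x,t)$; since every class in the quotient is a polynomial in $x$ with lifted coefficients, the presentation by that one relation is then immediate. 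You never lift anything: you compute the structure constants of the quotient directly from the convolution integral by expanding $(\phi_u(x))^n=x^n+nu\,x^{n+k-1}+\cdots$ and comparing the correction exponent $n+k-1$ with the truncation order $p$, which treats all three cases uniformly and isolates the lone surviving correction $(f_1*\delta(g_0))x^k$ (resp.\ $(f_1*\Delta(g_0))x$); you must then verify separately, by normal-ordering, that the stated relations reproduce exactly these constants --- a step the paper gets essentially for free from its splitting, but which your bookkeeping handles correctly, since $x^ng\equiv gx^n+n\delta(g)x^{n+k-1}$ modulo higher-order terms, so only the monomial with $(n,m)=(1,0)$ contributes modulo $x^{k+1}$. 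What each approach buys: the paper's lift-and-relation mechanism makes the phrase ``determined by the single relation'' transparent and is the template reused for the formal-series quotient $C_c^\infty(\R)[[x]]$ in the subsequent proposition, while your direct expansion avoids the auxiliary algebra entirely, needs only differentiation under the integral sign (justified, as you say, by the compact $u$-support of $g$), and covers $p<k$ and $p=k=1$ explicitly where the paper leaves them to the reader. One point to phrase carefully in a final write-up: for $k\geq 2$ the closed-form solution of $\frac{d}{du}\phi_u=\phi_u^k$ is valid only where $X$ actually equals $x^k\frac{d}{dx}$, so the expansion should be stated as the paper does --- $\phi_u(x)=x+ux^k+h(x,u)$ with $h$ vanishing to order $k+1$ along $\{0\}\times\R$ --- which your observation that the computation sees only the germ of $\phi$ at $x=0$, for $u$ ranging over the compact support of $g$, already supplies.
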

\begin{proof}
We shall only consider the (most complicated) case $p=k \geq 2$. It is convenient to enlarge $C_c^\infty(\R  \rtimes_\phi \R)$ slightly and work instead in the \emph{smooth translation algebra} $C^\infty_u(\R \rtimes_\phi \R)$ consisting of smooth functions $f$ on $\R \rtimes_\phi \R$ supported inside $M \times [-r,r]$ for some $r >0$ (depending on $f$). The convolution product and adjoint operation on $C^\infty_u(\R \rtimes_\phi M)$ are defined in the same way, using \eqref{convinv}. The translation algebra has the convenient property of containing $C_c^\infty(\R)$ as a subalgebra. Specifically, given $f \in C_c^\infty(\R)$, define $\widetilde f \in C_u^\infty(\R\rtimes_\phi \R)$ by $\widetilde f(x,t)=f(t)$ for all $(x,t)$. The assignment 
\[ f \mapsto \widetilde f : C_c^\infty(\R) \to C_u^\infty(\R \rtimes_\phi \R) \] 
is an injective $*$-homomorphism satisfying $T(\widetilde f) =f$ for $f \in C_c^\infty(\R)$. We also have the obvious relation $T(f \cdot x) = T(f) x$, $f \in C_u^\infty(\R \rtimes_\phi \R)$, whence we may extend  $f \mapsto \widetilde f$ to a linear splitting $S: C_c^\infty(\R)[x]/(x^{k+1}) \to C_u^\infty(\R \rtimes_\phi \R)$ of $T$ defined as follows:
\begin{align*}
 S(f_0 + f_1 x + \ldots + f_k x^k) =  \widetilde f_0   + \widetilde f_1 \cdot x + \ldots + \widetilde f_k \cdot x^k  
\end{align*}
It therefore remains to understand the product on elements of the form $\widetilde f_0   + \widetilde f_1 \cdot x + \ldots + \widetilde f_k \cdot x^k$, modulo the ideal $x^{k+1} \cdot C_u^\infty(\R \rtimes_\phi \R)$.   Let $\widetilde \delta$ be the derivation of $C_u^\infty(\R \rtimes_\phi \R)$ given by $(\widetilde \delta f)(x,t) = t f(x,t)$. Then we have 
 \begin{align*} 
 \widetilde{\delta f}=\widetilde \delta \widetilde f && f \in C_c^\infty(\R).
 \end{align*}
Recall that $\F^k_\R$ is singly-generated by $x^k \frac{d}{dx}$. When $k =1$, this is just the Euler vector field; its flow is complete and given  by the scaling action $\phi_t(x) =  e^t x$. When   $k \geq 2$, the flow is not complete. The flow is given by
\begin{align*}
\phi_t(x)=\frac{x}{\sqrt[k-1]{1-(k-1)tx^{k-1}}}.
\end{align*}
and is  defined on the domain $\{ (t,x)  : tx^{k-1} < \tfrac{1}{k-1} \}$. More important for us than this specific formula is the way that the Taylor series of the flow starts:
\begin{align}
\phi_t(x) \sim x+ t x^k + \ldots 
\end{align}
  Since $X$ is a complete vector field generating $\F^k_\R$, $k \geq 2$ which furthermore coincides with $x^k \frac{d}{dx}$ on a neighbourhood of $0$, it follows that the flow of $\phi$ can be expressed as
\[ \phi_t(x) = x + t x^k + h(x,t) \]
where $h$ is a smooth function vanishing to order $k+1$ on $\{0\} \times \R$.  We therefore have, for any $f \in C_u^\infty(\R \rtimes_\phi \R)$,
\[ x \cdot f = f \cdot x + (\widetilde \delta f) \cdot x^k + r \]
where $r \in x^{k+1} \cdot C_c^\infty(\R \rtimes_\phi \R)$.  
Replacing $f$ above with $\widetilde f$ for $f \in C_c^\infty(\R)$, we obtain 
\begin{align*}
x \cdot \widetilde f 
=\widetilde f \cdot x + \widetilde{\delta(f)} \cdot x^k  \pmod{x^{k+1} \cdot C_u^\infty(\R \rtimes_\phi \R)}.
\end{align*}
Together with the fact that $\widetilde f * \widetilde g  = \widetilde{f*g}$, for $f,g \in C_c^\infty(\R)$, the above determines the product on $C_c^\infty(\R)[x]/(x^{k+1})$ to be as claimed.
\end{proof}

Finally, we describe the product structure on $\frac{C_c^\infty(\R \rtimes_\phi \R)}{x^\infty \cdot C_c^\infty(\R \rtimes_\phi \R)}$. According to Borel's theorem (\cite{Hormander}, pp~16),  the mapping $T:  C_c^\infty(\R \rtimes_\phi \R) \to C_c^\infty(\R)[[x]]$ to formal series with coefficients in $C_c^\infty(\R)$ given by forming the Taylor expansion at $x=0$ is surjective. Clearly its kernel is equal to $x^\infty \cdot C_c^\infty(\R \rtimes_\phi \R)$, so we may understand the product on $\frac{C_c^\infty(\R \rtimes_\phi \R)}{x^\infty \cdot C_c^\infty(\R \rtimes_\phi \R)}$ by transferring it to  $C_c^\infty(\R)[[x]]$.

Some special notations will be needed. Given a smooth action $\phi$ of $\R$ on $\R$ fixing the origin, let
\[ \phi_t(x) \sim \sum_{m=1}^\infty \phi_m(t) x^m \]
be the Taylor expansion of the flow in $x$, so that $\phi_m$ is a smooth function on $\R$. More generally, we consider the Taylor expansion of any power of the flow: 
\[ (\phi_t(x))^n \sim  \sum_{m=n}^\infty \phi_m^n(t) x^m. \]
The smooth functions $\phi_m^n$ will be used to describe the product on $C_c^\infty(\R)[[x]]$ induced by $T$.

\begin{propn}
Let $\phi$ be a smooth action of $\R$ on $\R$ which fixes the origin.  The product $*$ on $C_c^\infty(\R)[[x]]$ determined by the Taylor expansion map $T:  C_c^\infty(\R \rtimes_\phi \R) \to C_c^\infty(\R)[[x]]$   is such that, if  $f = \sum_{n \geq 0} f_n x^n$ and $g = \sum_{n \geq 0} g_n x^n$, then $f*g = h$ where $h = \sum_{n \geq 0} h_n x^n$ is given by
\[ h_p = \sum_{n \leq m \leq p} f_n * (\phi_m^n g_{p-m}).\]
\end{propn}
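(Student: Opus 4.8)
The plan is to compute the Taylor coefficients of $f*g$ directly from the convolution formula \eqref{convinv}. Since $\ker T = x^\infty \cdot C_c^\infty(\R\rtimes_\phi\R)$ is a two-sided ideal, the product transferred to $C_c^\infty(\R)[[x]]$ is by definition $T(f)*T(g) \colonequals T(f*g)$, so it suffices to express the $p$th Taylor coefficient
\[ h_p(t) = \frac{1}{p!}\frac{\partial^p}{\partial x^p}(f*g)(x,t)\Big|_{x=0} \]
in terms of the coefficients of $f$ and $g$. For $x$ ranging in a bounded set the integrand $u \mapsto f(\phi_u(x),t-u)\,g(x,u)$ is smooth and compactly supported in $u$ (as $g \in C_c^\infty(\R\rtimes_\phi\R)$), so I may differentiate under the integral sign freely.

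First I would expand each factor of the integrand as a Taylor series in $x$ about $x=0$. For $g$ this is immediate: $g(x,u) \sim \sum_{j\geq 0} g_j(u)\, x^j$. For the first factor, the point is that $f(\phi_u(x),t-u)$ is a composite in the variable $x$: writing $f(y,t-u) \sim \sum_{n\geq 0} f_n(t-u)\, y^n$ and substituting $y = \phi_u(x)$, the fact that $\phi$ fixes the origin forces the expansion $\phi_u(x)\sim\sum_{m\geq 1}\phi_m(u)x^m$ to have vanishing constant term, so the composition of jets is computed by formal substitution. Using $(\phi_u(x))^n \sim \sum_{m\geq n}\phi_m^n(u)\, x^m$, the coefficient of $x^m$ in $f(\phi_u(x),t-u)$ is therefore $\sum_{n=0}^m f_n(t-u)\,\phi_m^n(u)$.

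Multiplying these two expansions and reading off the coefficient of $x^p$ in the integrand produces $\sum_{m=0}^{p}\sum_{n=0}^{m} f_n(t-u)\,\phi_m^n(u)\,g_{p-m}(u)$. Integrating over $u$ and recognising each resulting integral $\int_\R f_n(t-u)\,[\phi_m^n(u)g_{p-m}(u)]\,du$ as the convolution $(f_n * (\phi_m^n g_{p-m}))(t)$ — where $\phi_m^n g_{p-m}$ denotes the \emph{pointwise} product of functions of the time variable — yields exactly $h_p = \sum_{n\leq m\leq p} f_n*(\phi_m^n g_{p-m})$, as claimed. I would also note in passing that this expression manifestly depends only on $T(f)$ and $T(g)$, confirming that the formula does describe a well-defined product on $C_c^\infty(\R)[[x]]$.

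The only genuinely nonroutine point is the composition-of-jets step: one must justify that the $p$-jet in $x$ of the smooth composite $f(\phi_u(x),t-u)$ is obtained by formally substituting the $p$-jet of $x\mapsto\phi_u(x)$ into the $p$-jet of $y\mapsto f(y,t-u)$. This is the standard fact that Taylor expansion respects composition at matching base points, applicable here precisely because $\phi$ fixes the origin so that the inner map sends $0$ to the point $y=0$ about which $f$ is expanded. Every remaining step is an interchange of a $u$-integration with an $x$-differentiation, valid by compact support, together with the bookkeeping of the finitely many terms contributing to each fixed order $p$, so no convergence issue arises.
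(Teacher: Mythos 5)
Your proposal is correct and follows essentially the same route as the paper's own proof: both extract the coefficient of $x^p$ in the integrand $f(\phi_u(x),t-u)\,g(x,u)$ by the product rule for jets together with the composition-of-jets identity $[m]\,f(\phi_u(x),t-u)=\sum_{n\le m}\phi_m^n(u)\,f_n(t-u)$ (valid since $\phi_u(0)=0$), and then integrate in $u$ to recognise the convolutions $f_n*(\phi_m^n g_{p-m})$. Your write-up is if anything more careful than the paper's, since you explicitly justify differentiating under the integral via compact support and note that the resulting formula depends only on $T(f)$ and $T(g)$.
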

Notice that if $\phi_t(x)=x$, or even if $\phi$ simply fixes the origin to infinite order,  then $\phi^m_n(t)=1$ if $m=n$ and $0$ otherwise, restoring the usual (commutative) power structure on $C_c^\infty(\R)[[x]]$. 
\begin{proof}
Throughout, $[m]$ denotes the operation of extracting the coefficient of $x^m$ in a formal series. Let $F,G \in C_c^\infty(\R \rtimes_\phi \R)$ be lifts of $f,g$ through the map $T: C_c^\infty(\R \rtimes_\phi \R) \to \C[[x]]$. Let $H = F*G$. Then, we have
\begin{align*}
[p] F(t-s,\phi_s(x)) G(s,x) &= \sum_{m=0}^p \left( [m] F(t-s,\phi_s(x)) \right) \left( [p-m] G(s,x) \right) \\
&=\sum_{m=0}^p \sum_{n=0}^m \left( [m] \phi_s(x)^n \right) \left( [n]  F(t-s,x) \right) g_{p-m}(s) \\
&=\sum_{m=0}^p \sum_{n=0}^m \phi_m^n(s) f_n(t-s) g_{p-m}(s) 
\end{align*}
Integrating with respect to $s$ then gives
\[[p] H(x,t) = \sum_{m=0}^p \sum_{n=0}^m (f_n * (\phi_m^n g_{p-m})(t) \]
\end{proof}

We conclude by pointing out the following relation among the functions $\phi_m^n$ which are being used to define the product on $C_c^\infty(\R)[[x]]$.

\begin{propn}
Let $\phi$ be  a smooth action of $\R$ on $\R$ fixing the origin. As above, define smooth functions $\phi_m^n$, for $m, n \geq 0$, by $\phi_m^n(t) = [m] (\phi_t(x))^n$, where $[m]$ denotes the operation of extracting the coefficient of $x^m$ in a Formal series. Then, the following identities are satisfied.
\[ \phi_m^n(t) = \sum_{i=n}^m \phi_m^i(t-s) \phi_i^n(s) \]\end{propn}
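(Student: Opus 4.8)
The plan is to derive the identity directly from the flow (cocycle) property of $\phi$, which at the level of functions reads $\phi_t(x) = \phi_s(\phi_{t-s}(x))$, and then match coefficients of $x^m$ on both sides after raising to the $n$th power. The entire argument takes place in the ring of formal power series with coefficients in $C^\infty(\R)$, so no analytic subtleties arise beyond the appeal to Borel's theorem already made above; the cocycle relation holds for the genuine flow wherever it is defined and passes to the formal Taylor expansions.

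First I would raise the flow identity to the $n$th power, giving $(\phi_t(x))^n = \big(\phi_s(\phi_{t-s}(x))\big)^n$. Writing $y = \phi_{t-s}(x)$ and using the definition $\phi_i^n(s) = [i](\phi_s(y))^n$, the outer $n$th power expands as
\[ (\phi_s(y))^n \sim \sum_{i \geq n} \phi_i^n(s)\, y^i. \]
Next I would substitute the expansion of each power of the inner flow, namely $y^i = (\phi_{t-s}(x))^i \sim \sum_{m \geq i} \phi_m^i(t-s)\, x^m$, to produce a double series in $x$:
\[ (\phi_t(x))^n \sim \sum_{i \geq n} \phi_i^n(s) \sum_{m \geq i} \phi_m^i(t-s)\, x^m. \]

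Finally I would extract the coefficient of $x^m$. Since each power of the flow begins at the corresponding order in $x$, one has $\phi_m^i = 0$ unless $m \geq i$ and $\phi_i^n = 0$ unless $i \geq n$, so only the indices $n \leq i \leq m$ survive, yielding
\[ \phi_m^n(t) = [m](\phi_t(x))^n = \sum_{i=n}^m \phi_m^i(t-s)\, \phi_i^n(s), \]
which is the asserted identity.

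The computation is entirely formal and presents no genuine obstacle. The one point requiring care is the choice of the decomposition $\phi_t = \phi_s \circ \phi_{t-s}$ rather than $\phi_{t-s} \circ \phi_s$: it is this ordering that places $t-s$ on the outer factor $\phi_m^i$ and $s$ on the inner factor $\phi_i^n$, matching the arrangement in the statement. (The opposite decomposition would prove the same identity with $s$ and $t-s$ interchanged, which is equivalent since $s$ is arbitrary.)
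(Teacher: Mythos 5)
Your proof is correct and is essentially the paper's own argument: the paper records the coefficient identity $[m](f\circ g)^n=\sum_{i=n}^m([m]g^i)([i]f^n)$ for formal series vanishing at $0$ and applies it to a decomposition of the flow, which is exactly the substitution-and-extract computation you carry out inline. The only (immaterial) difference is that the paper takes $f=\phi_{t-s}$, $g=\phi_s$, yielding the identity with $s$ and $t-s$ interchanged --- the equivalent variant you already note in your closing remark.
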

\begin{proof}
For any smooth functions $f$ and $g$ on $\R$ which vanish at $0$, one has
\begin{align*}
[m] (f \circ g)^n = \sum_{i=n}^m ([m] g^i)([i] f^n).
\end{align*}
Taking $f = \phi_{t-s}$ and $g=\phi_s$ so that $f \circ g = \phi_t$ we get the desired identity. 
\end{proof}

This identity above suggests the presence of a Hopf  algebra behind the scenes. It would be interesting to look for a  connection between the algebras $\frac{C_c^\infty(\R \rtimes_\phi \R)}{x^\infty \cdot C_c^\infty(\R \rtimes_\phi \R)}$ and the the work of Connes and Kreimer \cite{Connes-Kreimer}. See also \cite{Butcher}, \cite{Cayley}.

\bibliographystyle{abbrv}
\bibliography{ApplBib}

\end{document}